\documentclass[11pt]{amsart}
\usepackage{amsmath,amssymb}
\usepackage{hyperref}
\usepackage{enumerate}
\usepackage{bm}
\usepackage{color}
  \usepackage{graphicx}
  \usepackage{latexsym} 
  \usepackage[all]{xy}  
  \usepackage{amsfonts} 
  \usepackage[2emode]{psfrag}

\addtolength{\hoffset}{-1.5cm}
\addtolength{\textwidth}{3cm}
\addtolength{\voffset}{-1.5cm}
\addtolength{\textheight}{3cm}

\setlength{\parindent}{0pt}
\newtheorem{proposition}{Proposition}[section]
\newtheorem{lemma}[proposition]{Lemma}
\newtheorem{corollary}[proposition]{Corollary}
\newtheorem{theorem}[proposition]{Theorem}

\theoremstyle{definition}
\newtheorem{definition}[proposition]{Definition}

\theoremstyle{remark}
\newtheorem{remark}[proposition]{Remark}

\newcommand{\thlabel}[1]{\label{th:#1}}
\newcommand{\thref}[1]{Theorem~\ref{th:#1}}
\newcommand{\selabel}[1]{\label{se:#1}}
\newcommand{\seref}[1]{Section~\ref{se:#1}}
\newcommand{\lelabel}[1]{\label{le:#1}}
\newcommand{\leref}[1]{Lemma~\ref{le:#1}}
\newcommand{\prlabel}[1]{\label{pr:#1}}
\newcommand{\prref}[1]{Proposition~\ref{pr:#1}}
\newcommand{\colabel}[1]{\label{co:#1}}
\newcommand{\coref}[1]{Corollary~\ref{co:#1}}
\newcommand{\relabel}[1]{\label{re:#1}}
\newcommand{\reref}[1]{Remark~\ref{re:#1}}

\newcommand{\eqlabel}[1]{\label{eq:#1}}
\newcommand{\equref}[1]{(\ref{eq:#1})}

\newcommand{\HOM}{{\sf HOM}}

\newcommand{\Vect}{{\sf Vect}}

\newcommand{\id}{{\sf id}\,}

\newcommand{\coev}{{\rm coev}}
\newcommand{\ev}{{\rm ev}}

\newcommand{\Hom}{{\sf Hom}}

\newcommand{\Rat}{{\sf Rat}}

\newcommand{\Fam}{{\sf Fam}}
\newcommand{\Maf}{{\sf Maf}}
\newcommand{\PFam}{{\sf PFam}}
\newcommand{\PMaf}{{\sf PMaf}}

\def\rff{\reflectbox{{\scriptsize \sf F}}}
\def\rss{\reflectbox{{\scriptsize \sf S}}}
\def\rpp{\reflectbox{{\scriptsize \sf P}}}
\def\rf{\reflectbox{\sf F}}
\def\rs{\reflectbox{{\sf S}}}
\def\rp{\reflectbox{{\sf P}}}

\def\Ab{\underline{\underline{\sf Ab}}}

\def\ot{\otimes}

\def\lie{{[-,-]}}

\def\id{\textrm{{\small 1}\normalsize\!\!1}}

\def\MM{{\mathbb M}}

\def\ZZ{{\mathbb Z}}

\def\lll{{\mathfrak l}}

\def\sss{{\mathfrak s}}

\newcommand{\Aa}{\mathcal{A}}
\newcommand{\Bb}{\mathcal{B}}
\newcommand{\Cc}{\mathcal{C}}
\newcommand{\Dd}{\mathcal{D}}

\newcommand{\Hh}{\mathcal{H}}

\newcommand{\Ll}{\mathcal{L}}
\newcommand{\Mm}{\mathcal{M}}

\newcommand{\Oo}{\mathcal{O}}

\newcommand{\Rr}{\mathcal{R}}

\newcommand{\Vv}{\mathcal{V}}

\def\text#1{{\rm {\rm #1}}}

\def\ol{\overline}
\def\ul{\underline}
\def\dul#1{\underline{\underline{#1}}}

\def\Set{\dul{\rm Set}}

\def\lim{{\rm lim\,}}

\def\u{{\rm u}}

\def\Alg{{\sf Alg}}

\def\CoAlg{{\sf CoAlg}}

\def\Bialg{{\sf BiAlg}}

\def\Hpfalg{{\sf HpfAlg}}

\def\LieAlg{{\sf LieAlg}}
\def\LieGrp{{\sf LieGrp}}

\def\LieCoAlg{{\sf LieCoAlg}}

\def\uoR{\underline{\overline{R}}}
\def\uoL{\underline{\overline{L}}}
\def\uoH{\underline{\overline{H}}}
\def\uR{\underline{R}}
\def\uL{\underline{L}}
\def\oR{\overline{R}}
\def\uH{\underline{H}}
\def\oH{\overline{H}}
\def\oL{\overline{L}}
\def\wR{\widetilde{R}}
\def\wL{\widetilde{L}}

\def\Fam{{\sf Fam}}

\hyphenation{co-algebra co-product bi-algebra Hopf}

\begin{document}
\title[Duality of Lie and Hopf algebras]{On the duality of generalized Lie and
Hopf algebras}

\author{I.\ Goyvaerts}
\address{Department of Mathematics, Faculty of Engineering, Vrije Universiteit Brussel, Pleinlaan 2, B-1050 Brussel, Belgium}
\email{igoyvaer@vub.ac.be}
\author{J.\ Vercruysse}
\address{D\'epartement de Math\'ematiques, Facult\'e des sciences, Universit\'e Libre de Bruxelles, Boulevard du Triomphe, B-1050 Bruxelles, Belgium}
\email{jvercruy@ulb.ac.be}

\thanks{\textcolor{blue}{This version of the paper differs slightly from the published one (as appeared in {\sl Adv. Math.} \textbf{258} (2014), p. 154-190). After publication in the cited journal, the authors were made aware of missing conditions in \leref{functorsuo} and \thref{uoadjunction} for the properties mentioned in the statements to hold in full generality for Hopf algebras, too. The actual version of the article aims at providing a correction to this inaccuracy. We also point out that our main theorems still hold in full generality. Moreover, the additional conditions are fulfilled in the cases of interest, so that this correction does not harm any of the results of the paper.}}
\begin{abstract}
We show how, under certain conditions, an adjoint pair of braided monoidal functors can be lifted to an adjoint pair between categories of Hopf algebras. This leads us to an abstract version of Michaelis' theorem, stating that given a Hopf algebra $H$, there is a natural isomorphism of Lie algebras $Q(H)^*\cong P(H^\circ)$, where $Q(H)^*$ is the dual Lie algebra of the Lie coalgebra of indecomposables of $H$, and $P(H^\circ)$ is the Lie algebra of primitive elements of the Sweedler dual of $H$. We apply our theory to Turaev's Hopf group-(co)algebras. 
\end{abstract}
\maketitle

\tableofcontents

\section*{Introduction and motivation}

It is widely known that the theories of Lie algebras, Lie groups and Hopf algebras are strongly interrelated. More precisely, given a Lie group $G$, one can associate to it the Lie algebra $T_1(G)$ (the tangent space at unity) and the Hopf algebra $\Rr(G)$ (the algebra of representative functions on $G$), where we work over a fixed base field $k$ of (for example) the real numbers. 
Moreover, $T_1$ is in fact part of a pair of adjoint functors $(\Gamma,T_1)$ between finite-dimensional Lie algebras and Lie groups, that even becomes an equivalence of categories if we restrict to simply connected Lie groups. Furthermore, given a Lie algebra $L$, one can construct the Hopf algebra $U(L)$ (the universal enveloping algebra of $L$).
Given a Hopf algebra $H$, one can construct a new Hopf algebra $H^\circ$ (being the Sweedler dual of $H$) and the Lie algebra $P(H)$ (that consists of all primitive elements in $H$). It is known that $(-)^\circ$ is a contravariant self-adjoint endofunctor (see e.g.\ \cite{Abe}, Chapter 2, section 3.5, page 87) and $(U,P)$ is an adjoint pair of functors. We can summarize the above in the following diagram.
\[
\xymatrix{
\LieAlg \ar@<.5ex>[rr]^-U \ar@<.5ex>@{.>}[d]^{\Gamma} && \Hpfalg \ar@<.5ex>[ll]^-P \ar@/^/@<.5ex>[d]^{(-)^\circ}\\
\LieGrp \ar@/^/@<.5ex>[rr]^-{\Rr} \ar@<.5ex>[u]^{T_1} && \Hpfalg  
\ar@/^/@<.5ex>[u]^{(-)^\circ}
}
\]
Here the bended arrows are contravariant functors, the straight arrows are covariant (and the arrow of the functor $\Gamma$ is dotted because it only applies to finite-dimensional Lie algebras).

The above diagram has many interesting properties, for example in the case of the Lie group $SL_n(k)$ (working over a base field $k$ of characteristic $0$) with associated Lie algebra ${\sss\lll}_{n}(k)$, we have an isomorphism (see e.g.\ \cite{Mont:book}, Example 9.1.6)
\begin{equation}\eqlabel{DualMont}U({\sss\lll}_{n}(k))^\circ\cong \Oo(SL_n(k)),\end{equation}
where $\Oo(SL_n(k))$ is the subalgebra of $\Rr(SL_n(k))$ consisting of polynomial functions on $SL_n(k)$.

Unfortunately, the above isomorphism cannot be extended to a natural isomorphism valid for all Lie algebras or Lie groups. Michaelis proposed an alternative approach, replacing Lie groups by so-called Lie coalgebras. A Lie coalgebra is a formal dualization of a Lie algebra, in the same sense that a $k$-coalgebra dualizes a $k$-algebra. More precisely, a Lie coalgebra is a vector space $C$ endowed with an operation $\Upsilon:C\to C\ot C$ that is antisymmetric and satisfies a dual version of the Jacobi identity. This leads to the following duality diagram 
\[
\xymatrix{
\LieAlg \ar@<.5ex>[rr]^-U \ar@/^/@<.5ex>[d]^{(-)^\circ} && \Hpfalg \ar@<.5ex>[ll]^-P \ar@/^/@<.5ex>[d]^{(-)^\circ}\\
\LieCoAlg \ar@<.5ex>[rr]^-{U^c} \ar@/^/@<.5ex>[u]^{(-)^*} && \Hpfalg \ar@<.5ex>[ll]^{Q} \ar@/^/@<.5ex>[u]^{(-)^\circ}
}
\]
In this diagram, the functors $Q$ and $U^c$ are constructed in a dual way to the classical functors $P$ and $U$ (see \cite{Mich} for more details of this construction). The functor $(-)^*$ takes the underlying vector space of a Lie coalgebra to its dual vector space, which can be endowed in a canonical way with a Lie algebra structure. This is obviously a contravariant functor. As for usual algebras (and Hopf algebras), for a Lie algebra $L$ with bracket $[-,-]$, the space $L^\circ$ is a subspace of $L^*$ consisting of those functionals $f\in L^*$ such that $f\circ \lie\in L^*\ot L^*$. Remark that the new diagram here above looks more symmetric: all horizontal functors are covariant, while vertical functors are contravariant. Moreover, there is no longer a finiteness restriction on Lie algebras (or Lie coalgebras) and the diagram is now completed with a functor from Hopf algebras to Lie coalgebras. Michaelis proved that given a Lie algebra $L$, the diagram commutes in the sense that there is a natural isomorphism of Hopf algebras
$$U(L)^\circ \cong U^c(L^\circ).$$
In some sense, this is the Lie coalgebra version of the desired duality result mentioned above. Furthermore, given a Hopf algebra $H$, Michaelis \cite{Mich2} proved that the diagram also commutes in the sense that we have a natural isomorphism of Lie algebras
\begin{equation}\eqlabel{Mich}Q(H)^*\cong P(H^\circ).\end{equation}

Despite this remarkable fact, the Sweedler dual of a Hopf algebra can become trivial (isomorphic to the base field), even for a non-trivial Hopf algebra (see \cite{BCM}, 2.7 e.g. for a concrete classical example). 
Hence, 
to preserve non-trivial but similar results as \equref{DualMont} or \equref{Mich}, one is forced to consider other dualities than the Sweedler dual in these situations. With the rise of quantum groups, a non-commutative Pontryagin duality has been developed by Van Daele (\cite{VD:Mult}, \cite{VD:duality}), Kustermans and Vaes (\cite{KusVae:LCQG}), amongst others, allowing a pair of self-adjoint endofunctors on a category of generalized Hopf algebras, such as multiplier Hopf algebras and locally compact quantum groups.

The aim of the presented paper is to extend the results of Michaelis to a more abstract setting that allows, motivated by the considerations above, also an application to other dualities than the Sweedler dual. 

More precisely, given a braided monoidal functor $R:\Cc\to \Dd$ with a (hence braided monoidal) left adjoint $L$, it is known that $R$ can be lifted to a monoidal functor $\oR:\Alg(\Cc)\to\Alg(\Dd)$ between the categories of algebras in $\Cc$ and $\Dd$ respectively. Similarly, $L$ lifts to a functor $\uL$ between the categories of coalgebras in $\Dd$ and $\Cc$ respectively. We will say that the pair $(L,R)$ is liftable if $\oR$ and $\uL$ allow a left, respectively right, adjoint (denoted by $\oL$ and $\uR$ respectively). We show that a liftable pair can be lifted to an adjoint pair of functors between the categories of {\color{blue}{bialgebras in $\Cc$ and $\Dd$. The lifted pair determines an adjunction between the categories of Hopf algebras in $\Cc$ and $\Dd$ provided two extra conditions hold}}. Suppose now that the categories $\Cc$ and $\Dd$ are additive symmetric monoidal, and $R$ is an additive symmetric monoidal functor. Then $R$ can also be lifted to a functor $\wR:\LieAlg(\Cc)\to\LieAlg(\Dd)$ between the categories of Lie algebras in $\Cc$ and $\Dd$ respectively. Moreover, $\Ll_\Dd\oR=\wR\Ll_\Cc$, where a functor $\Ll_\Aa:\Alg(\Aa)\to\LieAlg(\Aa)$ computes the commutator Lie algebra of an algebra in an additive symmetric monoidal category $\Aa$. We say that $R$ is Lie-liftable if the left adjoint $L$ of $R$ also lifts to a left adjoint $\wL$ for $\wR$. For a pair of liftable, Lie-liftable functors $(L,R)$ between symmetric monoidal categories $\Cc$ and $\Dd$ \textcolor{blue}{satisfying the appropriate conditions}, we then obtain a diagram of (all covariant) adjoint functors 
\[
\xymatrix{
\Hpfalg(\Cc) \ar@<.5ex>[rr]^-{\uoR} \ar@<.5ex>[d]^{P} && \Hpfalg(\Dd) \ar@<.5ex>[d]^{P} \ar@<.5ex>[ll]^-{\uoL} \\
\LieAlg(\Cc) \ar@<.5ex>[u]^{U} \ar@<.5ex>[rr]^{\wR} && \LieAlg(\Dd) \ar@<.5ex>[u]^{U} \ar@<.5ex>[ll]^-\wL
}
\]
For any Lie algebra $K$ in $\Dd$, there is a natural isomorphism of Hopf algebras in $\Cc$
$$\uoL UK\cong U\wL K$$
and for any Hopf algebra $H$ in $\Cc$, then there is a natural isomorphism of Lie algebras in $\Dd$
$$P\uoR H\cong \wR P H.$$
All this will be done in \seref{liftings}.

We then apply the obtained result to some concrete situations. First, we recover the result of Michaelis cited above in \seref{Mich}, by taking $\Dd=\Vect$, $\Cc=\Vect^{op}$ and $R$ the functor that computes the dual vector space. The main application is given in \seref{Tur}, where we consider the category of families of finite-dimensional vector spaces and families of the opposite of finite-dimensional vector spaces. The canonical isomorphism between a finite-dimensional vector space and its dual, induces a pair of functors between these categories upon which our theory applies. In this way, we obtain a version of Michaelis' theorem for Turaev's Hopf group-coalgebras, which play an important role in Homotopy Quantum Field Theories \cite{Tur} (see also \cite{Vir}). Indeed, as it was observed in \cite{CaeDel}, Hopf group-coalgebras can be treated as Hopf algebras in the symmetric monoidal category of families of dual vector spaces (called the Turaev category in the cited paper). Moreover, it was shown in \cite{ADVanDaele} (see also \cite{JV:KHA}) that these Hopf algebras provide an (easy) class of examples of multiplier Hopf algebras (discrete quantum groups) and therefore can serve as a toy-model for more complicated cases (of locally compact quantum groups) and (their) dualities, a direction for future work.

\section{Preliminaries}

\subsection*{Setting and notation} 
Throughout, we will work in the setting of monoidal categories. Without restrictions on our results, we will always assume that a monoidal category is strict. Moreover, we consider braided, symmetric and additive monoidal categories. A basic reference for these notions is e.g.\ \cite{Mac}. 

When $X$ is an object in a category $\Cc$, we will denote the identity morphism on $X$ by $1_X$ or $X$ for short.

Troughout the paper, $k$ is a field of characteristic $0$ and ${\Vect}$ denotes the category of $k$-vector spaces and $k$-linear maps.

\subsection*{Monoidal functors}
A functor $F:\Cc\to\Dd$ between monoidal categories $(\Cc,\ot,I)$ and $(\Dd,\odot,J)$ is said to be a monoidal functor if it comes equipped with a family of natural morphisms $\phi_{X,Y}:F(X)\odot F(Y)\to F(X\ot Y)$, $X,Y\in \Cc$ and
a $\Dd$-morphism $\phi_0:J\to F(I)$, satisfying the known suitable compatibility conditions with respect to the associativity and unit constraints of $\Cc$ and $\Dd$. Moreover, $(F,\phi_0,\phi)$ is called a {\em strong} monoidal functor if $\phi_0$ is an isomorphism and $\phi_{X,Y}$ is a natural isomorphism for any objects $X,Y\in\Cc$. $(F,\phi_0,\phi)$ is called a {\em strict} monoidal functor $(F,\phi_0,\phi)$ if $\phi_0$ is the identity morphism and $\phi$ is the identity natural transformation.

Dually, an {\em op-monoidal functor} $F:\Cc\to \Dd$ is a functor for which there exists a morphism $\psi_0:F(I)\to J$ in $\Dd$   and morphisms $\psi_{X,Y}:F(X\ot Y)\to F(X)\odot F(Y)$ in $\Dd$, that are natural in $X, Y\in \Cc$, satisfying suitable compatibility conditions.
Any strong monoidal functor $(F,\phi_0,\phi)$ is automatically op-monoidal: it suffices to take $\psi_0=\phi_0^{-1}$ and $\psi=\phi^{-1}$.

If $\Cc$ and $\Dd$ are braided monoidal categories, then a {\em braided monoidal functor} $F:\Cc\to \Dd$ is a monoidal functor such that $F\gamma_{X,Y}\circ \phi_{X,Y}=\phi_{Y,X}\circ \gamma_{FX,FY}:FX\odot FY\to F(Y\ot X)$, where $\gamma$ denotes the braiding in $\Cc$ and $\Dd$.

A {\em monoidal natural transformation} between (braided) monoidal functors $(F,\phi,\phi_0)$ and $(G,\psi,\psi_0)$ is a natural transformation $\alpha$ that satisfies $\alpha_{X\ot Y}\circ \phi_{X,Y}=(\alpha_X\odot\alpha_Y)\circ \psi_{X,Y}$ and $\phi_0\circ \alpha_I=\psi_0$.

\subsection*{Rigidity}
An object $X$ in a monoidal category is called {\em left rigid} if there exists an object $X^\dagger$ together with morphisms $\coev:I\to X\ot X^\dagger$ and $\ev:X^\dagger\ot X\to I$ such that 
$$(X\ot\ev)\circ (\coev\ot X)=X, \quad (\ev\ot X^\dagger)\circ (X^\dagger\ot\coev)= X^\dagger$$
It is easily verified that if $X$ is left rigid, then the object $X^\dagger$ is unique up to isomorphism. In this situation, we call $X^\dagger$ the {\em left dual} of $X$ and $(X^\dagger,X,\ev,\coev)$ an {\em adjoint pair} in $\Cc$.

A {\em right rigid} object is defined symmetrically. Remark that if $X$ is left rigid with left dual $X^\dagger$, then $X^\dagger$ is right rigid with right dual $X$.
A monoidal category is said to be {\em left rigid} (resp. right rigid, resp. rigid) if every object is left (resp. right, resp. both left and right) rigid. Another name for a rigid monoidal category is an {\em autonomous (monoidal) category}. If $\Cc$ is braided, then it is right rigid if and only if it is left rigid, in this situation the left and right duals of an object are isomorphic and we speak just about the dual for short.

\subsection*{Contravariant adjunctions}

In most of our applications, the functors will be contravariant. Let us make a short remark about contravariant adjunctions. If $(L:\Dd\to \Cc,R:\Cc\to \Dd)$ is an adjoint pair of contravariant functors, then we can consider them as covariant functors, by considering the opposite structure on $\Cc^{op}$. That is, $(L:\Dd\to \Cc^{op},R:\Cc^{op}\to \Dd)$ is an adjoint pair in the usual sense. If we use this convention (remark that one could as well have chosen an opposite structure on the category $\Dd$), the adjunctional isomorphisms are
$$\Hom_{\Cc^{op}}(LD,C)=\Hom_{\Cc}(C,LD)\cong \Hom_{\Dd}(D,RC).$$
Consequently, a contravariant adjunction $(L,R)$ is characterized by two units
\begin{eqnarray}
\nu_C:&&C\to LRC\\
\zeta_D:&&D\to RLD
\end{eqnarray}
satisfying 
\begin{eqnarray}
R\nu_C\circ \zeta_{RC}&=& RC\\
L\zeta_D\circ \nu_{LD}&=& LD
\end{eqnarray}
Remark that by the contravariance of $R$ and $L$, we have $R\nu_C:RLRC\to RC$ in $\Dd$ and $L\zeta_D:LRLD\to LD$ in $\Cc$.

\subsection*{Algebraic objects in monoidal categories}
Let $\Cc=(\Cc,\ot,l)$ be a (strict) monoidal category. An {\em algebra} in $\Cc$ is a triple
$A=(A,m,u)$, where $A\in \Cc$ and $m: A\ot A\to A$ and $\u: I\to A$ are
morphisms in $\Cc$ such that the following diagrams commute:
$$
\xymatrix{
A\ot A\ot A\ar[rr]^{m\ot A}\ar[d]_{A\ot m}&& A\ot A\ar[d]^{m}\\
A\ot A\ar[rr]^{m}&&A}~~~~~~
\xymatrix{I\ot A\ar[r]^{u\ot A}\ar[rd]_{\cong}&
A\ot A\ar[d]^{m}&A\ot I\ar[l]_{A\ot u}\ar[dl]^{\cong}\\
&A&}
$$
A {\em coalgebra} in a monoidal category $\Cc$ is defined as an algebra in the opposite category $\Cc^{\rm op}$. Here, $\Cc^{op}=(\Cc^{op},\ot^{op},I)$ denotes the opposite category of $\Cc$ and $\ot^{op}: \Cc^{op}\times\Cc^{op}\to \Cc^{op}$ the opposite tensor product functor induced in the obvious way by $\ot$. Algebras and coalgebras in the monoidal category of modules over a commutative base ring $k$ are called $k$-algebras and $k$-coalgebras respectively.
 
Now, consider $\Cc$ to be moreover braided and denote by $c$ its braiding. A {\em bialgebra} in $\Cc$ is a pentuple
$H=(H,m,u,\Delta,\varepsilon)$, where $(H,m,u)$ is an algebra and 
$(H,\Delta,\varepsilon)$ a coalgebra in $\Cc$, such that $\Delta:H\to H\ot H$ and $\varepsilon: H\to I$ are algebra morphisms, or, equivalently, such that $m:H\ot H\to H$ and $u:I\to H$ are coalgebra morphisms. Here the algebra structure on $H\ot H$ is given by 
\begin{equation}
\xymatrix{H\ot H\ot H\ot H\ar[rr]^-{H\ot c_{H,H}\ot H} && H\ot H\ot H\ot H\ar[r]^-{m\ot m} & H\ot H},
\end{equation}
and the coalgebra structure by 
\begin{equation}
\xymatrix{H\ot H\ar[r]^-{\Delta\ot \Delta} & H\ot H\ot H\ot H\ar[rr]^-{H\ot c_{H,H}\ot H}&& H\ot H\ot H\ot H}.
\end{equation}
Let $C=(C,\Delta,\varepsilon)$ be a coalgebra, and $A=(A,m,u)$ an algebra in $\Cc$. We can define
a product $*$ on $\Hom_\Cc(C,A)$ as follows: for $f,g: C\to A$, let
$f*g$ be defined by
$f*g = m\circ(f\ot g)\circ\Delta$.
$*$ is called the {\it convolution product}. In this way, $\Hom_\Cc(C,A)$ becomes an associative monoid with product $*$ and unit $u_A\circ \epsilon_C$. In particular, if $C$ is a $k$-coalgebra, then the dual space $C^*=\Hom(C,k)$ is a $k$-algebra.
Now suppose that $H$ is a bialgebra in $\Cc$, and take $H=A=C$ in the above construction.
If the identity $H$ of $H$ has a convolution inverse $S$, then we
say that $H$ is a {\em Hopf algebra} (or {\em Hopf monoid}) in $\Cc$. In this case, $S$ is called the 
{\em antipode} of $H$.
\\Symmetric monoidal categories are well-suited to consider cyclic permutations on copies of the same object. Moreover, if we assume additivity, these categories offer a framework to generalize the classical definition of Lie algebra and incorporate examples of ``generalized'' Lie algebras. So, using same notation as above, let $\Cc$ be an additive, symmetric monoidal category.
A \textit{Lie algebra in} $\Cc$ is a couple $(L,[-,-])$, denoted by $L$ for short if there is no confusion possible, where $L$ is an object of $\Cc$ and $[-,-]:\ L\ot L\to L$ is a morphism (which we call a {\em Lie bracket}) in $\Cc$ that satisfies
\begin{eqnarray}\eqlabel{ASsymm}
[-,-] \circ (1_{L\ot L} + c_{L,L})&=&0_{L\ot L,L},\\
\eqlabel{Jacsymm}
[-,-]\circ (1_{L}\ot [-,-])\circ (1_{L\ot(L\ot L)}+ t_c+ w_c)&=&0_{L\ot (L\ot L),L},
\end{eqnarray}
where $t_c=(c\ot L)\circ(L\ot c)$ and $w_c=(L\ot c)\circ (c\ot L)$.
A {\em Lie coalgebra} in $\Cc$ is a Lie algebra in $\Cc^{op}$. Explicitly, this means that a Lie coalgebra is a pair $(C,\Upsilon)$, where $\Upsilon :C\to C\ot C$ is a map, called the \textit{Lie co-bracket}, that satisfies the following two conditions
\begin{eqnarray*}
(1_{C\ot C} + c_{C,C})\circ \Upsilon &=&0_{C,C\ot C};\\
(1_{C\ot(C\ot C)}+ t_c+ w_c)\circ (1_{C}\ot \Upsilon)\circ \Upsilon&=&0_{C,C\ot (C\ot C)}.
\end{eqnarray*}

Recall that an algebra $(A,m,u)$ in a monoidal category $\Cc$ is called a {\em Frobenius algebra} if it possesses at the same time a coalgebra structure $(A,\Delta,\epsilon)$ rendering commutative the following two diagrams.
\[
\xymatrix{
A\ot A \ar[rr]^-{A\ot \Delta} \ar[d]_-{m} && A\ot A\ot A \ar[d]^-{m\ot A} \\
A\ar[rr]^-{\Delta} && A\ot A
}\quad
\xymatrix{
A\ot A \ar[rr]^-{\Delta\ot A} \ar[d]_-{m} && A\ot A\ot A \ar[d]^-{A\ot m} \\
A\ar[rr]^-\Delta && A\ot A
}
\]
In this situation, we also call $(A,\Delta,\epsilon)$ a Frobenius coalgebra.
Equivalently, an algebra $A$ is Frobenius if it is a rigid object in $\Cc$ and there is an isomorphism between the regular (right) $A$-module (i.e. $A$ with action $m$) and the dual (right) $A$-module (i.e. $A^\dagger$ with right action $(\ev\ot A^\dagger)\circ(A^\dagger\ot m\ot A^\dagger)\circ(A^\dagger\ot A\ot \coev)$).

In particular, working over a base field $k$, a Frobenius $k$-(co)algebra $A$ is always finite-dimensional. More generally, a $k$-coalgebra is called 
(left) {\em co-Frobenius} if there exists an injective (left) $C^*$-module morphism $C\to C^*$. Recall that the left {\em rational} part of $C^*$ is the set
$$\Rat({_{C^*}}C^*)=\{f\in C^*~|~\forall g\in C^*, \exists f_1,\ldots f_n\in C^*, c_1,\ldots,c_n\in C,\ s.t.\ g*f=\sum_{i=1}^ng(c_i)f_i\}$$
Recall that the rational part becomes a right $C$-comodule with coaction $\rho(f)=\sum_{i=1}^nf_i\ot c_i=f_{[0]}\ot f_{[1]}$.
Suppose now that $H$ is a Hopf algebra. A left {\em integral} on $H$ is an element $t\in H^*$ such that $f*t=f(1)t$ for all $f\in H^*$. The space of all left integrals is denoted as $\int_l$.
If $H$ is a Hopf algebra, then it is known that $H$ is left co-Frobenius if and only if $H$ is right co-Frobenius if and only there exists a left (or right) integral on $H$ if and only if the left (or right) rational part of $H^*$ is non-zero. In this situation, the map 
$$\int_l\ot H\to\Rat(_{H^*}H^*)~;~ t\ot h\mapsto t(-S(h))$$
is an isomorphism.

\section{Liftings of dualities: An abstract version of Michaelis' theorem}

In this Section, we generalize Michaelis' theorem \cite{Mich2}, stating that given a $k$-Hopf algebra $H$, there is a canonical isomorphism of $k$-Lie algebras
$$P(H^{\circ})\cong Q(H)^*.$$

\subsection{Lifting of an adjunction to Hopf algebras}\selabel{liftings}

In this Section, we will use the following classical result. Without repeating the complete proof in this note, let us recall the construction, as our computations depend on it.

\begin{theorem}\thlabel{monoidal&adjoint}
Let $(\Cc,\ot,I)$ and $(\Dd,\odot,J)$ be (braided) monoidal categories, and $(L,R)$ a pair of adjoint functors between $\Cc$ and $\Dd$ with unit $\alpha:\id_\Dd\to RL$ and counit $\beta:LR\to \id_\Cc$.
\[
\xymatrix{
\Cc \ar@<.5ex>[rr]^-R && \Dd \ar@<.5ex>[ll]^-L
}
\]
Then there is a bijective correspondence between (braided) monoidal structures $(\phi,\phi_0)$ on $R$ and (braided) op-monoidal structures $(\psi,\psi_0)$ on $L$. Furthermore the following diagrams commute canonically for all $C,C'\in \Cc$ and $D,D'\in\Dd$
\begin{eqnarray*}
\xymatrix{
 D\odot D' \ar[d]_{\alpha_{D\odot D'}} \ar[rr]^{\alpha_D\odot \alpha_{D'}}&& RLD\odot RLD' \ar[d]^{\phi_{LD,LD'}} \\
RL(D\odot D') \ar[rr]_{R\psi_{D,D'}} 
&& R(LD\ot LD')
}&&
\xymatrix{
L(RC\odot RC') \ar[d]_{L\phi_{C,C'}} \ar[rr]^{\psi_{RC,RC'}}&& LRC\ot LRC' \ar[d]^{\beta_C\ot \beta_{C'}}\\
LR(C\ot C') \ar[rr]_{\beta_{C\ot C'}}  
&& C\ot C'
}
\end{eqnarray*}
\end{theorem}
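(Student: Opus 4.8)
The plan is to build the correspondence explicitly by adjoint transposition (the calculus of mates) and then to match the two sets of coherence axioms one at a time. Starting from a monoidal structure $(\phi,\phi_0)$ on $R$, I transpose across $L\dashv R$ and set
\begin{eqnarray*}
\psi_{D,D'}&=&\beta_{LD\ot LD'}\circ L\big(\phi_{LD,LD'}\circ(\alpha_D\odot\alpha_{D'})\big),\\
\psi_0&=&\beta_I\circ L\phi_0.
\end{eqnarray*}
Conversely, from an op-monoidal structure $(\psi,\psi_0)$ on $L$ I set
\begin{eqnarray*}
\phi_{X,Y}&=&R(\beta_X\ot\beta_Y)\circ R\psi_{RX,RY}\circ\alpha_{RX\odot RY},\\
\phi_0&=&R\psi_0\circ\alpha_J.
\end{eqnarray*}
That these define natural families of the correct variance follows from naturality of $\alpha$ and $\beta$ together with the naturality of $\psi$ (resp.\ $\phi$) in its arguments.

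First I would check that the two assignments are mutually inverse, which is a direct application of the triangle identities $R\beta\circ\alpha R=\id_R$ and $\beta L\circ L\alpha=\id_L$. For the unit data, for example, transposing $\phi_0$ and back gives $R\beta_I\circ RL\phi_0\circ\alpha_J=R\beta_I\circ\alpha_{RI}\circ\phi_0=\phi_0$, where the first equality is naturality of $\alpha$ and the second is the triangle identity; the tensor components collapse in exactly the same fashion after pushing $\alpha$ (resp.\ $\beta$) through the relevant morphism.

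The two commuting squares in the statement are then nothing but the two forms of this transposition relation, so they hold by construction. The left-hand square asserts $R\psi_{D,D'}\circ\alpha_{D\odot D'}=\phi_{LD,LD'}\circ(\alpha_D\odot\alpha_{D'})$, which is recovered from the definition of $\psi$ by naturality of $\alpha$ and the identity $R\beta\circ\alpha R=\id_R$; the right-hand square is the dual statement, recovered using $\beta L\circ L\alpha=\id_L$. In both cases no coherence of $\phi$ or $\psi$ is needed, only the adjunction.

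The substantial step, and the one I expect to be the \emph{main obstacle}, is to verify that the coherence axioms correspond: that $(\phi,\phi_0)$ satisfies the associativity and unit constraints of a monoidal structure precisely when $(\psi,\psi_0)$ satisfies those of an op-monoidal structure, and, in the braided case, that the compatibility of $\phi$ with the braidings $\gamma^\Cc,\gamma^\Dd$ transports to that of $\psi$. Conceptually this is automatic, since the formation of mates is functorial: it carries a pasting of $2$-cells to the pasting of their mates, and each axiom is exactly an equality of two such pastings assembled from $\phi$ (resp.\ $\psi$), the tensor functors, and the unit constraints. Concretely I would take the associativity square for $\phi$ over a threefold tensor product, transpose both sides across the adjunction at the appropriate objects, and rewrite the result with the two squares above until it becomes the op-associativity square for $\psi$; the unit axioms and the braiding compatibility are handled identically, the latter using naturality of the braiding. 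This verification is routine but long, which is why only the construction is recalled here rather than the entire proof.
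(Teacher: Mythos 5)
Your construction is exactly the one the paper uses: the mate formulas $\psi_{D,D'}=\beta_{LD\ot LD'}\circ L\phi_{LD,LD'}\circ L(\alpha_D\odot\alpha_{D'})$, $\psi_0=\beta_I\circ L\phi_0$ and their transposes coincide with the paper's (your $R\psi_{RX,RY}$ even corrects a small typo in the paper's formula for $\phi$), and the two commuting squares are derived the same way, from naturality of $\alpha$, $\beta$, $\phi$, $\psi$ and the triangle identities. The paper likewise defers the coherence verification to the classical doctrinal-adjunction argument, so your proposal is correct and follows essentially the same route.
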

\begin{proof}
Suppose that $R$ is a monoidal functor $(R,\phi,\phi_0)$. Then one equips $L$ with an op-monoidal structure by putting
\begin{eqnarray*}
\psi_{D,D'}&:&\xymatrix{L(D\odot D')\ar[r]^-{L\alpha_{D}\odot \alpha_{D'}} & L(RLD\odot RLD') \ar[r]^-{L\phi_{LD,LD'}}
& LR(LD\ot LD') \ar[r]^-{\beta_{LD\ot LD'}} & LD\ot LD'}
\\
\psi_0&:&\xymatrix{LJ\ar[rr]^-{L\phi_0} && LRI \ar[rr]^-{\beta_I} && I}
\end{eqnarray*}
If $R$ is moreover braided monoidal, then $L$ is braided op-monoidal with the $\psi_{-,-}$ and $\psi_{0}$ as above. Conversely, if $(L,\psi,\psi_0)$ is (braided) op-monoidal, than $R$ becomes monoidal by putting $\phi_{C,C'}=R(\beta_C\odot\beta_{C'})\circ R\phi_{RC,RC'}\circ \alpha_{RC\ot RC'}$ and $\phi_0=R\psi_0\circ \alpha_J$. The last statement follows by formulas above and the unit-counit property of the adjunction.
\end{proof}

We denote the category of algebras in the monoidal category $(\Cc,\ot,I)$ by $\Alg(\Cc)$. It consists of algebras in $\Cc$ as objects. A morphism $f:\ul A\to \ul B$ in $\Alg(\Cc)$ is a $\Cc$-morphism $f:A\to B$ such that $u_B=f\circ u_A$ and $m_B\circ (f\ot f)=f\circ m_A$.

If $\Cc$ is moreover a braided monoidal category with braiding $\gamma$, then $\Alg(\Cc)$ is a monoidal category with monoidal product
$\ul A\ot \ul B=(A\ot B,(m_A\ot m_B)\circ (A\ot \gamma_{B,A}\ot B), u_A\ot u_B)$. The monoidal unit in $\Alg(\Cc)$ is the trivial algebra on the unit object $I$.
Observe that we have a strict monoidal (forgetful) functor 
$$U_\Cc:\Alg(\Cc)\to \Cc.$$
The following result is again classical, we only state formulas of the construction, as these are useful for our later computations.
\begin{theorem}\thlabel{liftingalgebras}
Let $(\Cc,\ot,I,\gamma)$ and $(\Dd,\odot,J,\chi)$ be braided monoidal categories and $R:\Cc\to \Dd$ a braided monoidal functor. Then $R$ induces a monoidal functor $(\oR,\ol\phi,\ol\phi_0)$ such that the following diagram of monoidal functors commutes
\[
\xymatrix{
\Alg(\Cc) \ar[rr]^-{\oR} \ar[d]^{U_\Cc} && \Alg(\Dd) \ar[d]^{U_\Dd} \\
\Cc \ar[rr]^R && \Dd
}
\]
i.e.\ $R U_\Cc=U_\Dd\oR$, where $U_\Cc$ and $U_\Dd$ denote the canonical forgetful functors.
\end{theorem}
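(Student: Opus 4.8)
The plan is to define $\oR$ directly from the monoidal data $(\phi,\phi_0)$ of $R$, and to derive every algebra and functoriality identity from the corresponding identity in $\Cc$ by applying $R$ and invoking naturality. On objects, given an algebra $\ul A=(A,m_A,u_A)$ in $\Cc$, I would set
$$\oR(\ul A)=\big(RA,\ Rm_A\circ\phi_{A,A},\ Ru_A\circ\phi_0\big),$$
and on a morphism $f:\ul A\to\ul B$ of algebras I would simply put $\oR(f)=Rf$. With these definitions the commutativity $RU_\Cc=U_\Dd\oR$ is immediate and needs no verification: by construction $U_\Dd\oR(\ul A)=RA=R(U_\Cc\ul A)$ on objects, and likewise on morphisms. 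So the content of the theorem lies entirely in checking that $\oR$ lands in $\Alg(\Dd)$, that it is functorial, and that it is monoidal.

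First I would verify that $\oR(\ul A)$ is an algebra in $\Dd$. Associativity is the equality of the two composites $RA\odot RA\odot RA\to RA$ obtained by bracketing the multiplication on the left or on the right. Expanding $Rm_A\circ\phi_{A,A}\circ\big((Rm_A\circ\phi_{A,A})\odot RA\big)$ and using naturality of $\phi$ in its first variable (applied to $m_A$) turns it into $Rm_A\circ R(m_A\ot A)\circ\phi_{A\ot A,A}\circ(\phi_{A,A}\odot RA)$; the associativity coherence axiom of the monoidal functor $R$ rewrites $\phi_{A\ot A,A}\circ(\phi_{A,A}\odot RA)$ as $\phi_{A,A\ot A}\circ(RA\odot\phi_{A,A})$, and applying $R$ to the associativity of $m_A$ then matches the other bracketing. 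The two unit axioms follow in the same way from the unit coherence of $(R,\phi,\phi_0)$ together with the unit axioms of $\ul A$. Functoriality is equally routine: that $Rf$ is an algebra morphism $\oR(\ul A)\to\oR(\ul B)$ follows from naturality of $\phi$ together with $f\circ m_A=m_B\circ(f\ot f)$ and $f\circ u_A=u_B$, after applying $R$.

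For the monoidal structure I would take $\ol\phi_{\ul A,\ul B}=\phi_{A,B}$ and $\ol\phi_0=\phi_0$ at the level of underlying objects; the point to check is that these are morphisms in $\Alg(\Dd)$, i.e.\ that $\phi_{A,B}$ is an algebra morphism $\oR(\ul A)\odot\oR(\ul B)\to\oR(\ul A\ot\ul B)$ and $\phi_0$ one from the trivial (unit) algebra on $J$ to $\oR(I)$. This is the main obstacle, and the only place where the \emph{braided} hypothesis on $R$ is used. The multiplication on $\oR(\ul A)\odot\oR(\ul B)$ is built from the braiding $\chi$ of $\Dd$, whereas the multiplication on $\ul A\ot\ul B$ (hence on $\oR(\ul A\ot\ul B)$) is built from the braiding $\gamma$ of $\Cc$; reconciling the two forces one to commute $\phi$ past a braiding, which is exactly the identity $R\gamma_{B,A}\circ\phi_{B,A}=\phi_{A,B}\circ\chi_{RB,RA}$ expressing that $R$ is braided monoidal. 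I would therefore expand $\phi_{A,B}\circ(m_{RA}\odot m_{RB})\circ(RA\odot\chi_{RB,RA}\odot RB)$, push $\phi_{A,B}$ inward using naturality and the associativity coherence of $\phi$ to assemble the factor $\phi_{A\ot B,A\ot B}\circ(\phi_{A,B}\odot\phi_{A,B})$, and use the braiding identity to convert the occurrence of $\chi$ into the $R\gamma_{B,A}$ sitting inside $R$ applied to the multiplication of $\ul A\ot\ul B$; the compatibility for $\phi_0$ is a shorter analogous computation.

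Finally, once $\ol\phi$ and $\ol\phi_0$ are known to be algebra morphisms, the monoidal-functor coherence axioms for $(\oR,\ol\phi,\ol\phi_0)$ come for free: the forgetful functor $U_\Dd$ is faithful and strict monoidal, so applying $U_\Dd$ to each coherence equation for $\oR$ yields exactly the corresponding coherence equation for $(R,\phi,\phi_0)$, which holds by hypothesis, and faithfulness of $U_\Dd$ lifts these equalities back up to $\Alg(\Dd)$. Naturality of $\ol\phi$ in $\ul A,\ul B$ is inherited from naturality of $\phi$ by the same argument, using that $U_\Cc$ and $U_\Dd$ are faithful. This completes the construction of $\oR$ as a monoidal functor making the square commute.
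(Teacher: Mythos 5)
Your proposal is correct and follows exactly the construction the paper uses: the same definition of $\oR$ on objects and morphisms, and the same choice $\ol\phi_{\ul A,\ul B}=\phi_{A,B}$, $\ol\phi_0=\phi_0$ for the monoidal structure. The paper merely states this construction and leaves the verifications to the reader, whereas you spell them out (correctly identifying that the braided-monoidal hypothesis on $R$ is needed precisely to show $\phi_{A,B}$ is an algebra morphism); there is no substantive difference in approach.
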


\begin{proof}
For an algebra $\ul A=(A,m_A,u_A)$ in $\Alg(\Cc)$, we define $\ol R\ul A=(RA,Rm_A\circ\phi_{A,A},Ru_A\circ \phi_0)$. For a morphism $f$ in $\Alg(\Cc)$, we define $\oR f=Rf$. 

To recover the monoidal structure $(\ol\phi,\ol\phi_0)$ on $\oR$, one can verify that for two algebras $\ul A$ and $\ul B$, the map $\phi_{A,B}$ is a morphism of algebras between $\oR\ul A\odot\oR\ul B$ and $\oR(\ul A\ot\ul B)$. Hence we can define 
$\ol\phi_{\ul A,\ul B}= \phi_{A,B}$. Similarly, $\phi_0$ is an algebra morphism from $J\to \oR I$, and we define $\ol\phi_0=\phi_0$.
\end{proof}

Dually to the above, the category $\CoAlg(\Dd)$ (resp. $\CoAlg(\Cc)$) of coalgebras in $\Dd$ (resp. in $\Cc$) is monoidal and a braided op-monoidal functor $L:\Dd\to\Cc$ induces an op-monoidal functor $\uL$ 
\[
\xymatrix{
\CoAlg(\Cc)\ar[d]^{U^\Cc} && \CoAlg(\Dd) \ar[ll]^-{\uL} \ar[d]^{U^\Dd} \\
\Cc  && \Dd \ar[ll]^L
}
\]

\begin{remark}\relabel{liftable}
Let $(L,R)$ be an adjoint pair between braided monoidal categories such that $R$ is a monoidal functor, and use notation as above.
In this note, we will say that the pair $(L,R)$ is {\em liftable} if the functors $\oR$ and $\uL$ allow a left, respectively right adjoint that we denote by $\oL$ and $\uR$ respectively.
\end{remark}

Our aim is now to prove that a liftable pair of functors induces an adjoint pair of functors between the associated categories of \textcolor{blue}{bialgebras. We will obtain this result in different steps. 
\\In order for the lifted adjunction to hold at the Hopf algebra level, two extra conditions are needed (see \leref{functorsuo}). To state these conditions, remark that, whenever $\oL$ exists, one has a natural transformation $\kappa: L\circ U_\Dd\to U_\Cc \circ \oL$ characterized by commutativity of the following diagram (here, $\ul B$ is an algebra in $\Dd$):}
\begin{equation}\eqlabel{kappa}
\xymatrix{
U_\Dd \ul B \ar[rr]^{\eta_{U_\Dd \ul B}}\ar[rrd]_{U_\Dd \overline{\eta}{\ul B}}&& {RL U_\Dd \ul B} \ar[d]^{R\kappa_{\ul B}}\\
&& U_\Dd \oR\oL \ul B = RU_\Cc \oL \ul B}
\end{equation}
\textcolor{blue}{
Similarly, if $\uR$ exists, one has a natural transformation $\kappa': R\circ U^\Cc\to U^{\Dd} \circ \uR$.}
\begin{lemma}\lelabel{functorsuo}
Let $(L,R)$ be a liftable pair of functors between braided monoidal categories and use notation as above. Then there exists a functor 
$$\uoL:\Bialg(\Dd)\to\Bialg(\Cc)$$
commuting with the forgetful functor to algebras and a functor 
$$\uoR:\Bialg(\Cc)\to\Bialg(\Dd)$$
commuting with the forgetful functor to coalgebras. 
\\The same result holds, replacing bialgebras by Hopf algebras, \color{blue}{provided $\kappa_{\ol H'}$ is an epimorphism in $\Cc$ and $\kappa'_{\ul H}$ is an epimorphism in $\Dd$ whenever $\uoH$ is a Hopf algebra in $\Cc$ and $\uoH'$ is a Hopf algebra in $\Dd$}.
\end{lemma}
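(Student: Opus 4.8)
The plan is to exhibit bialgebras as monoid (resp.\ comonoid) objects and to transport them along the lifted adjoints. Recall that a bialgebra in $\Cc$ is exactly a monoid object in the monoidal category $\CoAlg(\Cc)$: the underlying coalgebra is the object, and the bialgebra compatibilities say precisely that the multiplication $m$ and unit $u$ are coalgebra morphisms, hence morphisms of $\CoAlg(\Cc)$. Dually, a bialgebra in $\Dd$ is a comonoid object in $\Alg(\Dd)$. The first step is to upgrade the adjoints supplied by liftability: applying \thref{monoidal&adjoint} to the adjunction $(\uL,\uR)$, in which $\uL$ is op-monoidal, shows that its right adjoint $\uR:\CoAlg(\Cc)\to\CoAlg(\Dd)$ is monoidal; dually, applying it to $(\oL,\oR)$, in which $\oR$ is monoidal, shows that its left adjoint $\oL:\Alg(\Dd)\to\Alg(\Cc)$ is op-monoidal.

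Since a monoidal functor carries monoid objects to monoid objects, $\uR$ sends a bialgebra $H$ in $\Cc$ (a monoid in $\CoAlg(\Cc)$) to a monoid in $\CoAlg(\Dd)$, i.e.\ a bialgebra $\uoR H$ in $\Dd$ whose multiplication and unit are built from $\uR(m)$, $\uR(u)$ and the structure maps of the monoidal functor $\uR$. By construction the underlying coalgebra of $\uoR H$ is $\uR$ applied to the underlying coalgebra of $H$, so $\uoR$ commutes with the forgetful functor to $\CoAlg(\Dd)$. Symmetrically, as an op-monoidal functor carries comonoid objects to comonoid objects, $\oL$ sends a bialgebra in $\Dd$ (a comonoid in $\Alg(\Dd)$) to a comonoid in $\Alg(\Cc)$, defining $\uoL:\Bialg(\Dd)\to\Bialg(\Cc)$, which commutes with the forgetful functor to $\Alg(\Cc)$. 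Functoriality of both assignments is inherited from $\uR$ and $\oL$, which settles the bialgebra statement.

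For the Hopf statement it remains to produce antipodes. In a braided category the antipode $S$ of $H$ is a coalgebra morphism $H\to H^{\mathrm{cop}}$, hence a morphism of $\CoAlg(\Cc)$; I would therefore take as candidate antipode of $\uoR H$ the morphism obtained by applying $\uR$ to $S$ and then identifying $\uR(H^{\mathrm{cop}})$ with $(\uR H)^{\mathrm{cop}}$. One then verifies that the two antipode identities $m\circ(S\ot 1_H)\circ\Delta=u\circ\varepsilon$ and $m\circ(1_H\ot S)\circ\Delta=u\circ\varepsilon$ transport to the corresponding identities for $\uoR H$ by feeding them through $\uR$ together with its coherence data. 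The dual construction, using that $S:H\to H^{\mathrm{op}}$ is an algebra morphism and applying the op-monoidal functor $\oL$, produces the antipode on $\uoL H$.

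I expect this last step to be the genuine obstacle. Unlike the bialgebra axioms, the antipode identity couples the monoid and comonoid structures, and the functors $\uR$, $\oL$ are in general only lax (resp.\ oplax) monoidal, not strong; moreover $\CoAlg(\Cc)$ is only monoidal and need not be braided, so one cannot simply invoke the slogan that a braided strong monoidal functor preserves Hopf monoids. The crux is thus to show that $\uR$ is compatible with the formation of co-opposite coalgebras (equivalently, with the braiding), so that the identification $\uR(H^{\mathrm{cop}})\cong(\uR H)^{\mathrm{cop}}$ is legitimate and the candidate lands in the correct convolution monoid. This compatibility should be extracted from the explicit description of the structure maps of $\uR$ furnished by \thref{monoidal&adjoint}, combined with the braided monoidality of the original functor $R$; once it is in place, confirming the two antipode identities becomes a diagram chase rather than a conceptual difficulty.
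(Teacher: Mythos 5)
Your proposal is correct and follows essentially the same route as the paper: identify $\Bialg(\Cc)=\Alg(\CoAlg(\Cc))=\CoAlg(\Alg(\Cc))$, use \thref{monoidal&adjoint} to make $\uR$ monoidal and $\oL$ op-monoidal, transport (co)monoid objects, and obtain the antipode by applying $\uR$ (resp.\ $\oL$) to $S$ viewed as a coalgebra morphism $H\to H^{\mathrm{cop}}$ (resp.\ algebra morphism $H\to H^{\mathrm{op}}$). The compatibility with $(-)^{\mathrm{cop}}$ that you flag as the delicate point is exactly the step the paper dispatches with ``one checks,'' so your caution is warranted but does not change the argument.
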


\begin{proof}
Recall that $\Bialg(\Cc)=\Alg(\CoAlg(\Cc))=\CoAlg(\Alg(\Cc))$. Since the functor $\ol R:\Alg(\Cc)\to\Alg(\Dd)$ is monoidal, its left adjoint $\ol L:\Alg(\Dd)\to \Alg(\Cc)$ is op-monoidal. Hence it induces a functor $\uoL:\Bialg(\Cc)=\CoAlg(\Alg(\Cc))\to \CoAlg(\Alg(\Dd))=\Bialg(\Dd)$, which commutes with the forgetful functors. Explicitly, we have for any $\uoH'\in \Bialg(\Dd)=\CoAlg(\Alg(\Dd))$, $\uoH'=(\uH',\Delta',\epsilon')$, where $\uH'$ is the underlying algebra and $\Delta'$ and $\epsilon'$ are algebra morphisms, 
$$\uoL \uoH'=(\ol L \uH', \ol\psi_{\uH',\uH'}\circ \ol L\Delta', \ol \psi_0\circ \ol L\epsilon')$$
Similarly, for $\uoH\in \Bialg(\Cc)=\Alg(\CoAlg(\Cc))$, $\uoH=(\oH,m,u)$, where $\oH$ is the underlying coalgebra and $m$ and $u$ are algebra morphisms, 
$$\uoR \uoH=(\ul R \oH, \ul Rm\circ \ul \phi_{\oH,\oH}, \ul R u\circ \ul\phi_0)$$
Now suppose that $\uoH$ is a Hopf algebra in $\Cc$, i.e. $\uoH$ is a bialgebra enriched with an antipode morphism $S:H\to H$. Since an antipode is a coalgebra morphism $\oH\to \oH^{cop}$, where $\oH^{cop}$ is the co-opposite coalgebra, we can compute $\uR S:\uR \oH\to \uR \oH^{cop}$. \textcolor{blue}{One checks (see \cite[Proposition 31]{PS}) that $\uR S$ is indeed an antipode for the bialgebra $\uoR \uoH$ provided $\kappa'_{\ul H}$ is an epimorphism in $\Dd$. Similarly, the functor $\uoL$ sends Hopf algebras in $\Dd$ to Hopf algebras in $\Cc$ provided $\kappa_{\ol H'}$ is an epimorphism in $\Cc$ whenever $\uoH'$ is a Hopf algebra in $\Dd$.}
\end{proof}

\begin{lemma}\lelabel{dualcoalgebramorphism}
Let $(L,R)$ be a pair of adjoint functors between monoidal categories $\Cc$ and $\Dd$ such that $L$ is an op-monoidal functor (equivalently, $R$ is monoidal). Consider a coalgebra $\ol C=(C,\Delta,\epsilon)$ in $\Cc$ and a coalgebra $\ol D=(D,\Delta',\epsilon')$ in $\Dd$. Then the natural isomorphism
$$\theta_{C,D}:\Hom_\Cc(LD,C)\to \Hom_\Dd(D,RC)$$
induces a natural isomorphism between $\Hom_{\CoAlg(\Cc)}(\uL\ol D,\ol C)$ and the set of all $f\in\Hom_\Dd(D,RC)$ that satisfy
\begin{eqnarray}
R\Delta \circ f&=&\phi_{C,C}\circ (f\odot f)\circ \Delta' \eqlabel{f1}\\
R\epsilon\circ f&=&\phi_0\circ \epsilon'. \eqlabel{f2}
\end{eqnarray}
\end{lemma}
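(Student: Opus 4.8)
The plan is to unwind the definition of a coalgebra morphism $\uL\ol D\to\ol C$ into its two defining equations, to transport each equation across the adjunction bijection $\theta_{-,D}$, and to recognize the transported equations as precisely \equref{f1} and \equref{f2}. Since $\theta_{C',D}$ is a bijection for every object $C'$, a termwise match of the two sides will exhibit $\theta_{C,D}$ as restricting to the asserted correspondence.

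First I would recall, dually to \thref{liftingalgebras}, that the lifted coalgebra is $\uL\ol D=(LD,\,\psi_{D,D}\circ L\Delta',\,\psi_0\circ L\epsilon')$, so that $g\in\Hom_\Cc(LD,C)$ underlies a morphism in $\CoAlg(\Cc)$ exactly when $\Delta\circ g=(g\ot g)\circ\psi_{D,D}\circ L\Delta'$ and $\epsilon\circ g=\psi_0\circ L\epsilon'$; write $f=\theta_{C,D}(g)=Rg\circ\alpha_D$. For the comultiplication equation, whose two sides lie in $\Hom_\Cc(LD,C\ot C)$, I apply $\theta_{C\ot C,D}$ and compare images in $\Hom_\Dd(D,R(C\ot C))$. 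The left side transports, by functoriality of $R$, to $R\Delta\circ Rg\circ\alpha_D=R\Delta\circ f$, the left side of \equref{f1}. For the right side I push $\alpha_D$ through $R\big((g\ot g)\circ\psi_{D,D}\circ L\Delta'\big)$ in three successive rewrites: naturality of $\alpha$ at $\Delta'$ turns $RL\Delta'\circ\alpha_D$ into $\alpha_{D\odot D}\circ\Delta'$; the left commuting square of \thref{monoidal&adjoint} turns $R\psi_{D,D}\circ\alpha_{D\odot D}$ into $\phi_{LD,LD}\circ(\alpha_D\odot\alpha_D)$; and naturality of $\phi$ at $g$ turns $R(g\ot g)\circ\phi_{LD,LD}$ into $\phi_{C,C}\circ(Rg\odot Rg)$. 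Since $(Rg\odot Rg)\circ(\alpha_D\odot\alpha_D)=f\odot f$, the result is $\phi_{C,C}\circ(f\odot f)\circ\Delta'$, the right side of \equref{f1}.

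For the counit equation, whose two sides lie in $\Hom_\Cc(LD,I)$, I apply $\theta_{I,D}$. The left side becomes $R\epsilon\circ f$; for the right side, naturality of $\alpha$ at $\epsilon'$ replaces $RL\epsilon'\circ\alpha_D$ by $\alpha_J\circ\epsilon'$, and then the identity $\phi_0=R\psi_0\circ\alpha_J$ recorded in the proof of \thref{monoidal&adjoint} yields $\phi_0\circ\epsilon'$. Thus the counit condition on $g$ is equivalent to \equref{f2}.

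Combining both steps, and using that each $\theta_{C',D}$ is a bijection, $g$ is a coalgebra morphism if and only if $f=\theta_{C,D}(g)$ satisfies \equref{f1} and \equref{f2}; naturality of the resulting correspondence in $C$ and $D$ is inherited from that of $\theta$. I expect the only genuine difficulty to be bookkeeping in the comultiplication step: the factor $\alpha_D$ must be transported through the composite in exactly the order naturality of $\alpha$, then the monoidal/op-monoidal compatibility square, then naturality of $\phi$, and mis-sequencing these three rewrites is the one place a careless chase would fail. Everything else is formal manipulation with the adjunction.
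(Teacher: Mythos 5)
Your proof is correct and follows essentially the same strategy as the paper: unwind the coalgebra-morphism conditions for $\uL\ol D\to\ol C$ and transport them across the adjunction using the compatibility squares of \thref{monoidal&adjoint}. The only (harmless) difference is one of direction: the paper starts from $f$ satisfying \equref{f1}--\equref{f2}, sets $g=\beta_C\circ Lf$, and uses the counit $\beta$ together with the \emph{second} commuting square, leaving the converse to ``similar computations,'' whereas you transport $g$ forward via $f=Rg\circ\alpha_D$ using the unit $\alpha$ and the \emph{first} square, which has the small advantage of delivering both implications at once from the bijectivity of $\theta_{C\ot C,D}$ and $\theta_{I,D}$.
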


\begin{proof}
Given a 
morphism $g\in\Hom_{\Cc}(LD,C)$, we compute the corresponding element in $\Hom_\Dd(D,RC)$ as $f=Rg\circ \alpha_D$, where $\alpha$ is the unit of the adjunction. Conversely, given $f\in \Hom_\Dd(D,RC)$, we compute $g = \beta_C\circ Lf: LD\to C$, where $\beta:LR\to 1$ is the counit of the adjunction.
Suppose $f$ satisfies \equref{f1}, then 
\begin{eqnarray*}
\Delta\circ g&=&\Delta\circ \beta_C\circ Lf=\beta_{C\ot C}\circ L R\Delta\circ Lf\\
&=& \beta_{C\ot C}\circ L\phi_{C,C}\circ  L(f\odot f)\circ L\Delta'\\
&=&(\beta_{C}\ot\beta_{C})\circ \psi_{RC, R C}\circ L(f\odot f)\circ L\Delta'\\
&=&(\beta_{C}\ot\beta_{C})\circ ( Lf\ot  Lf)\circ\psi_{D, D}\circ  L\Delta'\\
&=& (g\ot g)\circ \Delta_{\uL \ol D}\\
\end{eqnarray*}
where we used naturality of $\beta$ in the second equality, \equref{f1} in the third equality, the last diagram of \thref{monoidal&adjoint} in the fourth equality, the naturality of $\psi$ in the fifth equality and the construction of the functor $\uL$ in the last equality. Hence $g$ preserves the comultiplication.
In a similar way, one proves that if $f$ satisfies \equref{f2}, then $g$ preserves the counit.

Conversely, by similar computations, we obtain that if $g$ is a coalgebra morphism, then $f$ satisfies \equref{f1} and \equref{f2}.
\end{proof}

We also state explicitly the dual of \leref{dualcoalgebramorphism}.

\begin{lemma}\lelabel{dualalgebramorphism}
Let $(L,R)$ be a pair of adjoint functors between monoidal categories $\Cc$ and $\Dd$ such that $R$ is a monoidal functor (equivalently, $L$ is op-monoidal). Consider an algebra $(A,m,u)$ in $\Dd$ and an algebra $(B,m',u')$ in $\Cc$. Then the natural isomorphism
$$\theta_{A,B}:\Hom_\Cc(LA,B)\to \Hom_\Dd(A,RB)$$
induces a natural isomorphism between $\Hom_{\Alg(\Dd)}(A,RB)$ and the set of all $f\in\Hom_\Cc(LA,B)$ that satisfy
\begin{eqnarray}
f\circ Lm &=& m'\circ (f\ot f)\circ \psi_{B,B} \eqlabel{f3} \\
f\circ Lu &=& u'\circ \psi_0 \eqlabel{f4}
\end{eqnarray}
\end{lemma}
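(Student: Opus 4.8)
The plan is to obtain \leref{dualalgebramorphism} as the formal dual of \leref{dualcoalgebramorphism}, read off in the opposite categories, and then to indicate the explicit computation that this unwinds to (one may equally carry that out by hand). Passing to opposites, the adjunction $L\dashv R$ becomes an adjunction $R^{op}\dashv L^{op}$ between $\Cc^{op}$ and $\Dd^{op}$, whose \emph{left} adjoint is now $R^{op}\colon\Cc^{op}\to\Dd^{op}$. Since $R$ is monoidal, $R^{op}$ is op-monoidal (and $L^{op}$ is monoidal), so after renaming the pair $(R^{op},L^{op})$ satisfies exactly the hypothesis of \leref{dualcoalgebramorphism}. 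An algebra in $\Dd$ (resp.\ in $\Cc$) is precisely a coalgebra in $\Dd^{op}$ (resp.\ in $\Cc^{op}$), so $(A,m,u)$ and $(B,m',u')$ become coalgebras there; the forgetful-compatible lift $\oR$ of \thref{liftingalgebras} corresponds to the coalgebra-lift of the op-monoidal $R^{op}$, and an algebra morphism $A\to\oR B$ in $\Dd$ is the same thing as a coalgebra morphism of the images in $\CoAlg(\Dd^{op})$. Under this translation the conditions \equref{f1}--\equref{f2} (stated in terms of the monoidal structure $\phi$ on the right adjoint and the comultiplications) become \equref{f3}--\equref{f4} (stated in terms of the op-monoidal structure $\psi$ on $L$ and the multiplications $m,m'$), and $\theta_{A,B}$ is just the adjunction isomorphism of \leref{dualcoalgebramorphism} with its variance flipped.

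For the record, the op-monoidal comparison occurring here is $\psi_{A,A}\colon L(A\odot A)\to LA\ot LA$, so that the right-hand side of \equref{f3} is the composite $m'\circ(f\ot f)\circ\psi_{A,A}\colon L(A\odot A)\to B$, matching the domain of $f\circ Lm$ on the left.

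Alternatively, and in complete parallel to the proof of \leref{dualcoalgebramorphism}, one verifies the equivalence directly. Write $h:=\theta_{A,B}(f)=Rf\circ\alpha_A$ and recall $m_{\oR B}=Rm'\circ\phi_{B,B}$. Expanding $m_{\oR B}\circ(h\odot h)$, pulling $Rf\odot Rf$ through $\phi$ by naturality of $\phi$, and then applying the first (unit) square of \thref{monoidal&adjoint} in the form $\phi_{LA,LA}\circ(\alpha_A\odot\alpha_A)=R\psi_{A,A}\circ\alpha_{A\odot A}$, collapses the expression to $R\bigl(m'\circ(f\ot f)\circ\psi_{A,A}\bigr)\circ\alpha_{A\odot A}$; while naturality of $\alpha$ gives $h\circ m_A=R(f\circ Lm)\circ\alpha_{A\odot A}$. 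Since $\xi\mapsto R\xi\circ\alpha_{A\odot A}$ is the adjunction bijection $\Hom_\Cc(L(A\odot A),B)\to\Hom_\Dd(A\odot A,RB)$, the two composites agree if and only if \equref{f3} holds; \equref{f4} is treated identically with $\phi_0,\psi_0$ in place of $\phi,\psi$ and the unit triangles. This is the mirror image of the earlier proof, with the roles of the two squares of \thref{monoidal&adjoint} (and of $\alpha$ versus $\beta$) interchanged.

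The only genuine obstacle is bookkeeping: one must apply the opposite-category passage consistently, so that left/right adjoints, monoidal/op-monoidal structures, and algebras/coalgebras are all swapped compatibly and the comparison morphisms land on the correct objects. Once the dictionary is fixed, the statement is a transcription of \leref{dualcoalgebramorphism} and no new content is needed.
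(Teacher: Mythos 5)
Your proposal is correct and matches the paper's own treatment: the paper offers no separate proof, simply declaring the lemma to be the formal dual of Lemma~\ref{le:dualcoalgebramorphism}, which is precisely your primary argument (your supplementary direct verification via $h=Rf\circ\alpha_A$ and the unit square of Theorem~\ref{th:monoidal&adjoint} is also sound, and mirrors the paper's proof of the coalgebra version). You are also right that the comparison morphism in \equref{f3} should be $\psi_{A,A}\colon L(A\odot A)\to LA\ot LA$ rather than the paper's $\psi_{B,B}$, which does not typecheck since $B\in\Cc$.
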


\begin{theorem}\thlabel{uoadjunction}
Let $(L,R)$ be a pair of adjoint functors between braided monoidal categories $\Cc$ and $\Dd$. Suppose that the pair $(L,R)$ is liftable and use notations as above. Then the pair of induced functors 
$$\xymatrix{\Bialg(\Cc) \ar@<.5ex>[rr]^-{\uoR} && \Bialg(\Dd) \ar@<.5ex>[ll]^-{\uoL} }$$
is an adjoint pair of functors.

I.e. for any pair of bialgebras $H\in \Bialg(\Cc)$ and $H'\in \Bialg(\Dd)$ there is a natural isomorphism
\begin{equation}\eqlabel{uoadjunction}
\Hom_{\Bialg(\Cc)}(\uoL H',H)\cong \Hom_{\Bialg(\Dd)}(H',\uoR H).
\end{equation}
Consequently, the functors $(\uoL,\uoR)$ induce also an adjoint pair between the categories of Hopf algebras in $\Cc$ and $\Dd$ \textcolor{blue}{provided $\kappa_{\ol H'}$ is an epimorphism in $\Cc$ and $\kappa'_{\ul H}$ is an epimorphism in $\Dd$ whenever $\uoH$ is a Hopf algebra in $\Cc$ and $\uoH'$ is a Hopf algebra in $\Dd$}.
\end{theorem}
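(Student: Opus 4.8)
The plan is to prove the hom-isomorphism \equref{uoadjunction} by describing each of its sides via \leref{dualcoalgebramorphism} and \leref{dualalgebramorphism}, and then matching the two descriptions across the base adjunction. I will freely use the identifications $\Bialg(\Cc)=\CoAlg(\Alg(\Cc))=\Alg(\CoAlg(\Cc))$, and likewise for $\Dd$. Recall that $\oR$ is monoidal (\thref{liftingalgebras}) with left adjoint $\oL$, so $\oL$ is op-monoidal; dually $\uL$ is op-monoidal with monoidal right adjoint $\uR$. Hence \leref{dualcoalgebramorphism} applies to the lifted pair $(\oL,\oR)$ and \leref{dualalgebramorphism} applies to the lifted pair $(\uL,\uR)$, and both lemmas apply as well to the base pair $(L,R)$ itself.

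For the left-hand side, I would apply \leref{dualcoalgebramorphism} to the adjunction $(\oL,\oR)$ between $\Alg(\Cc)$ and $\Alg(\Dd)$, regarding $H$ and $H'$ as coalgebra objects there. Since $\uoL=\CoAlg(\oL)$, this identifies $\Hom_{\Bialg(\Cc)}(\uoL H',H)$ with the algebra morphisms $\tilde f\colon H'\to\oR H$ in $\Dd$ obeying the two comultiplicativity conditions of that lemma. Because $\ol\phi_{\ul A,\ul B}=\phi_{A,B}$ (\thref{liftingalgebras}) and the forgetful functor $\Alg(\Dd)\to\Dd$ is faithful, these conditions collapse precisely to \equref{f1} and \equref{f2} for the base pair $(L,R)$. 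Dually, for the right-hand side I would apply \leref{dualalgebramorphism} to $(\uL,\uR)$ between $\CoAlg(\Dd)$ and $\CoAlg(\Cc)$, regarding $H'$ and $H$ as algebra objects there. Since $\uoR=\Alg(\uR)$, this identifies $\Hom_{\Bialg(\Dd)}(H',\uoR H)$ with the coalgebra morphisms $\bar g\colon\uL H'\to H$ in $\Cc$ obeying the two multiplicativity conditions of that lemma; as the op-monoidal structure of $\uL$ restricts to that of $L$ (dually to \thref{liftingalgebras}) and $\CoAlg(\Cc)\to\Cc$ is faithful, these are exactly \equref{f3} and \equref{f4} for $(L,R)$.

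It then remains to match the two pictures across the base isomorphism $\theta_{H',H}\colon\Hom_\Cc(LH',H)\cong\Hom_\Dd(H',RH)$, putting $\bar g=\theta^{-1}(\tilde f)$. The two base lemmas now apply in a crossed fashion to this one correspondence: \leref{dualalgebramorphism} for $(L,R)$ shows that $\tilde f$ is an algebra morphism if and only if $\bar g$ satisfies \equref{f3} and \equref{f4}, while \leref{dualcoalgebramorphism} for $(L,R)$ shows that $\tilde f$ satisfies \equref{f1} and \equref{f2} if and only if $\bar g$ is a coalgebra morphism. Hence $\theta$ carries the algebra morphisms $\tilde f$ satisfying \equref{f1}, \equref{f2} bijectively onto the coalgebra morphisms $\bar g$ satisfying \equref{f3}, \equref{f4}, which yields \equref{uoadjunction}; naturality in $H$ and $H'$ descends from the naturality of $\theta$ and of the isomorphisms in the two lemmas. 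Finally, since $\uoL$ and $\uoR$ send Hopf algebras to Hopf algebras (\leref{functorsuo}) and the Hopf algebras form a full subcategory of the bialgebras (a bialgebra morphism between Hopf algebras automatically commutes with the antipodes), the adjunction restricts verbatim to the categories of Hopf algebras.

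The main obstacle is precisely this matching step. One must verify that the single bijection $\theta$ simultaneously realizes both base lemmas on the same pair $\tilde f\leftrightarrow\bar g$, and that the conditions produced at the \emph{lifted} level (phrased with $\ol\phi$ and with the op-monoidal structure of $\uL$) genuinely reduce, through faithfulness of the forgetful functors, to the base-level conditions \equref{f1}--\equref{f4}. Once the conceptual cross-over ``(co)multiplicativity of $\tilde f$ $\leftrightarrow$ (co)algebra-morphism property of $\bar g$'' is recognized, the remaining verifications are routine bookkeeping.
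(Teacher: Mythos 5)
Your proposal is correct and follows essentially the same route as the paper: both sides are unpacked via \leref{dualcoalgebramorphism} applied to $(\oL,\oR)$ and \leref{dualalgebramorphism} applied to $(\uL,\uR)$, reduced through the forgetful functors to the four base-level conditions, and then matched by the crossed application of the two lemmas to the base pair $(L,R)$, exactly as in the paper's chain of bijections. The concluding remark on Hopf algebras also matches the paper's argument.
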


\begin{proof} 
Consider $f\in \Hom_{\Bialg(\Cc)}(\uoL H',H)$. This means $f\in\Hom_{\Alg(\Cc)}(\ol LH', H)$ such that $f$ is moreover a coalgebra morphism. By \leref{dualcoalgebramorphism}, applied to the adjoint pair $(\oL,\oR)$, the set of these morphisms $f$ is naturally isomorphic with the set of $f'\in\Hom_{\Alg(\Dd)}(H',\oR H)$ that satisfy
\begin{eqnarray*}
\ol R\Delta \circ f'&=&\ol \phi_{H,H}\circ (f'\odot f')\circ \Delta' \eqlabel{f'1}\\
\ol R\epsilon\circ f'&=&\ol \phi_0\circ \epsilon'. \eqlabel{f'2}
\end{eqnarray*}
Moreover, as $\ol R$ commutes with the forgetful functors to $\Cc$ and $\Dd$, we find that the set of morphisms $f'$ as above is isomorphic with the set of morphisms 
$g\in\Hom_\Dd(H',RH)$ that satisfy the following four conditions
\begin{eqnarray*}
R\Delta \circ g&=& \phi_{H,H}\circ (g\odot g)\circ \Delta' \eqlabel{g1}\\
R\epsilon\circ g&=& \phi_0\circ \epsilon'. \eqlabel{g2}\\
g\circ m'&=&Rm\circ \phi_{H,H}\circ (g\odot g), \eqlabel{g3}\\
g\circ u'&=&Ru\circ \phi_0. \eqlabel{g4}
\end{eqnarray*}
Using \leref{dualcoalgebramorphism} and \leref{dualalgebramorphism} applied on the pair $(L,R)$, we find that there is a bijective correspondence between the morphisms $g$ as above and morphisms $g'\in\Hom_\Cc(LH',H)$ satisfying the following four conditions
\begin{eqnarray}
\Delta\circ g'&=& (g'\ot g')\circ \psi_{H',H'}\circ L\Delta' =(g'\ot g')\circ \Delta_{LH'} \eqlabel{g'1} \\
\epsilon\circ g'&=& \psi_0\circ L\epsilon_0 = \epsilon_{LH'} \eqlabel{g'2} \\
g'\circ Lm &=& m'\circ (g'\ot g')\circ \psi_{H',H'} \eqlabel{g'3} \nonumber\\
u\circ \psi_0 &=& g'\circ Lu' \eqlabel{g'4}\nonumber 
\end{eqnarray}
As we can lift $L$ to the functor $\uL$, and equations \equref{g'1} and \equref{g'2} mean that that $g'$ is a coalgebra morphism, we find that $g'$ is in bijective correspondence with a lifted morphism $h\in\Hom_{\CoAlg(\Cc)(\ul L H',H)}$
 satisfying 
\begin{eqnarray*}
h\circ \uL m &=& m'\circ (h\ot h)\circ \ul\psi_{H',H'} \eqlabel{h3} \\
u\circ \ul\psi_0 &=& h\circ \uL u' \eqlabel{h4}
\end{eqnarray*} 
So if we apply one last time \leref{dualalgebramorphism} to the adjunction $(\uL,\uR)$, we find that the morphism $h$ above corresponds canonically with a morphism $h'\in\Hom_{\CoAlg(\Dd)}(H',\uR H)$ that is moreover an algebra morphism, i.e. $h'\in\Hom_{\Bialg(\Dd)}(H',\uoR H)$. This proofs the adjunction between the categories of bialgebras. The theorem for Hopf algebras follows since morphisms of Hopf algebras are morphisms between the underlying bialgebras.
\end{proof}

In the situation of \thref{uoadjunction}, the pair of adjoint functors $(\uoL,\uoR)$ is completing the following diagram
\[
\xymatrix{
\Bialg(\Cc) \ar[dr] \ar@<.5ex>[rr]^-{\uoR} \ar[dd] && \Bialg(\Dd) \ar@<.5ex>[ll]^-{\uoL} \ar'[d][dd] \ar[dr]\\
&\Alg(\Cc) \ar@<.5ex>[rr]^(0.3){\oR} \ar[dd]
&& \Alg(\Dd) \ar@<.5ex>@{.>}[ll]^(0.7){\oL} \ar[dd]
\\
\CoAlg(\Cc) \ar@<.5ex>@{.>}'[r]^-{\uR}[rr] \ar[dr] && \CoAlg(\Dd)  
\ar@<.5ex>'[l][ll]^-{\uL} \ar[dr] \\
&\Cc \ar@<.5ex>[rr]^R && \Dd \ar@<.5ex>[ll]^L
}
\]
where all undecorated vertical and lateral arrows are forgetful functors. Moreover, we can replace $\Bialg$ by $\Hpfalg$ in the above diagram, \textcolor{blue}{given that the extra necessary conditions stated in \thref{uoadjunction} are fulfilled.} {\color{blue} As we will show below, these conditions is always fulfilled in our cases of interest.}

\subsection{Lifting of an adjunction to Lie algebras}

Suppose now that $\Cc$ and $\Dd$ are additive, symmetric monoidal categories, and that $R:\Cc\to\Dd$ is an additive symmetric monoidal functor with left adjoint $L$. As known (see e.g. \cite{GV}) an additive symmetric monoidal functor preserves Lie algebras. Hence, we obtain a new functor
$$\xymatrix{ 
\LieAlg(\Cc)\ar[rr]^\wR && \LieAlg(\Dd)
}$$
which commutes with the forgetful functors. Here, $\LieAlg(\Cc)$ denotes the category of Lie algebras in $\Cc$ (the morphisms being the ones in $\Cc$ that preserve Lie brackets).
Now, recall that we have a functor 
$$\Ll_\Cc:\Alg(\Cc)\to \LieAlg(\Cc),$$
which takes any algebra $(A,m,u)$ in $\Cc$ to its commutator Lie algebra $\Ll_\Cc(A)=(A,\lie)$, where the commutator bracket $\lie$ is defined as
$$\lie:\xymatrix{A\ot A \ar[rrr]^-{m - m\circ\gamma_{A,A}} &&& A}.
$$
By construction, we have 
\begin{equation}\eqlabel{commutatorlift}
\Ll_\Dd \oR = \wR \Ll_\Cc.
\end{equation}
In the previous Section, we called the pair $(L,R)$ liftable if the adjunction could be lifted to adjunctions between the categories of algebras and coalgebras.
Similarly, we will say that $L$ is {\em Lie-liftable} if there exists a functor $\wL:\LieAlg(\Dd)\to\LieAlg(\Cc)$
that is a left adjoint for $\wR$.
As we will remark in the application section, this condition is not too hard, as $\wL$ can usually be constructed out of $\oL$, provided this functor exists. We then obtain the following diagram of functors
\[
\xymatrix{
\LieAlg(\Cc)\ar@<.5ex>[rr]^\wR && \LieAlg(\Dd)\ar@{.>}@<.5ex>[ll]^\wL\\
\Alg(\Cc) \ar@<.5ex>[rr]^\oR \ar[u]^{\Ll_\Cc} && \Alg(\Dd)\ar@<.5ex>[ll]^\oL \ar[u]^{\Ll_\Dd}
}
\]

Moreover, it is folklore knowledge that the construction of the universal envelope of a Lie algebra generalizes to cocomplete abelian monoidal categories where endofunctors of the form $-\ot X$ and $X\ot -$ are exact. Because we want to avoid too many restrictions, we will just suppose on $\Cc$ the existence of an {\em enveloping} functor 
$$U:\LieAlg(\Cc)\to \Alg(\Cc)$$
that is a left adjoint to the functor $\Ll:\Alg(\Cc)\to \LieAlg(\Cc)$. We will denote the unit and counit of this adjunction respectively by $\nu$ and $\theta$.
As in the classical case, the enveloping algebra of a Lie algebra is a Hopf algebra, as can be seen from the proposition presented here below; as we believe this is folklore, we only give a sketch of the proof. Let us first recall the notion of {\em primitive elements} $P(H)$ of a Hopf algebra $H$ in $\Cc$. Remark that $\Delta_H$ and $\ol u_H:=u\ot H+H\ot u$ are Lie algebra morphisms $\Ll H\to \Ll(H\ot H)$. We then define $P(H)$ as the object part in the following equalizer in $\LieAlg(\Cc)$.
\[
\xymatrix{P(H)\ar[rr]^p && \Ll H \ar@<.5ex>[rr]^-{\Delta_H} \ar@<-.5ex>[rr]_-{\ol u_H} && \Ll(H\ot H)}
\]
Remark that if $\Cc$ has equalizers and endofunctors of the form $-\ot X$ preserve equalizers, then $P(H)$ can be computed as an equalizer in $\Cc$, endowed with the restriction of the Lie bracket of $\Ll H$.

\begin{proposition}\prlabel{adjunctionUP}
If $\Cc$ is an additive symmetric monoidal category with equalizers, that admits a pair $(U,\Ll)$ of adjoint functors  as above, then these functors restrict and corestrict to a pair $(U,P)$ of adjoint functors 
\[
\xymatrix{\LieAlg(\Cc) \ar@<.5ex>[rr]^-U && \Hpfalg(\Cc) \ar@<.5ex>[ll]^-P}
\]
\end{proposition}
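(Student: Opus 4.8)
The plan is to first promote $UL$ to a Hopf algebra for every Lie algebra $L$ using only the universal property of the enveloping functor, and then to identify, via the adjunction $(U,\Ll)$, the bialgebra morphisms out of $UL$ with the Lie morphisms into the equalizer defining $P$. Throughout, the decisive bookkeeping tool is that, because $\nu_L$ is the unit of $(U,\Ll)$, the assignment $f\mapsto \Ll(f)\circ\nu_L$ is a bijection $\Hom_{\Alg(\Cc)}(UL,A)\cong\Hom_{\LieAlg(\Cc)}(L,\Ll A)$; in particular two algebra morphisms out of $UL$ coincide as soon as they agree after precomposition with $\nu_L$.

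For the first step I would build the structure on $UL$ as follows. Since $UL\ot UL$ is again an algebra, algebra morphisms $UL\to UL\ot UL$ correspond to Lie morphisms $L\to\Ll(UL\ot UL)$, and I would take the comultiplication $\Delta$ to be the algebra morphism corresponding to the Lie morphism $\ol u_{UL}\circ\nu_L$, and the counit $\varepsilon:UL\to I$ the one corresponding to the zero Lie morphism $L\to\Ll I$ (note that $\Ll I$ is abelian). Since coassociativity, the counit law and cocommutativity are all equalities of \emph{algebra} morphisms out of $UL$, by the generator principle they reduce to the evident symmetric identities at the level of $\nu_L$, where $\Delta$ reads $x\mapsto x\ot 1+1\ot x$ and $\varepsilon$ kills generators. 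The antipode I would obtain as the algebra morphism $S:UL\to UL^{\mathrm{op}}$ corresponding to $-\nu_L:L\to\Ll(UL^{\mathrm{op}})$, which is a Lie morphism because negation intertwines a bracket with its opposite; its axioms are handled by the standard convolution argument. This shows $U$ corestricts to $\Hpfalg(\Cc)$.

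Next, $P$ is a functor $\Hpfalg(\Cc)\to\LieAlg(\Cc)$: a Hopf algebra morphism $f:H\to H'$ gives a Lie morphism $\Ll(f)$ commuting with both $\Delta$ and $\ol u$ (these being preserved by any bialgebra morphism), so the universal property of the defining equalizer yields $P(f)$. For the adjunction I would start from the bijection above, $f\mapsto\bar f:=\Ll(f)\circ\nu_L$, and prove that $f$ is a bialgebra morphism exactly when $\bar f$ factors through $p:P(H)\to\Ll H$. The heart is the comultiplication condition $\Delta_H\circ f=(f\ot f)\circ\Delta_{UL}$: both sides are algebra morphisms $UL\to H\ot H$, so by the generator principle the equality is equivalent to what one gets after $\circ\,\nu_L$, which, unwinding the construction of $\Delta_{UL}$ and the naturality of $\ol u$ with respect to algebra morphisms, reads precisely $\Delta_H\circ\bar f=\ol u_H\circ\bar f$. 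Thus $\bar f$ equalizes $\Delta_H$ and $\ol u_H$, i.e.\ it factors through $P(H)$.

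Finally I would note that the counit condition on $f$ is automatic: applying $\varepsilon_H\ot H$ to $\Delta_H\circ\bar f=\ol u_H\circ\bar f$ and invoking the counit axiom of $H$ forces $\varepsilon_H\circ\bar f=0$, which is exactly counit compatibility of $f$. As a bialgebra morphism between Hopf algebras automatically respects antipodes, $\Hom_{\Hpfalg(\Cc)}(UL,H)=\Hom_{\Bialg(\Cc)}(UL,H)$, and since $p$ is a monomorphism (being an equalizer), the $\bar f$ factoring through $P(H)$ are in natural bijection with $\Hom_{\LieAlg(\Cc)}(L,P(H))$; composing the bijections gives the asserted natural isomorphism. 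I expect the main obstacle to be this middle translation — pinning down that the comultiplication condition becomes \emph{exactly} ``factors through the equalizer'', which depends on the precise definition of $\Delta_{UL}$ and on the faithfulness of precomposition with $\nu_L$ — while the abstract verification of the antipode axioms for $UL$ is the other delicate point, best dispatched by the standard argument rather than by hand.
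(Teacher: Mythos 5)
Your proposal is correct and follows essentially the same route as the paper: $U K$ is made a Hopf algebra via the adjunction $(U,\Ll)$ with $\ol\Delta=\nu_K\ot u+u\ot\nu_K$, $\ol\epsilon=0$, $\ol S=-\nu_K$, and the adjunction $(U,P)$ rests on the observation that $\nu_K$ equalizes $(\Delta_{UK},\ol u_{UK})$ and hence factors through the primitives. The only (purely presentational) difference is that you exhibit the hom-set bijection directly via the generator principle, whereas the paper constructs the unit $\ol\nu$ and counit $\ol\theta$ of the adjunction and leaves the remaining verifications to the reader; your added remarks that the counit compatibility is automatic and that bialgebra maps between Hopf algebras preserve antipodes are correct and welcome.
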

\begin{proof}
First, let us prove that for any Lie algebra $K$ in $\Cc$, $U(K)$ is a Hopf algebra in $\Cc$. To prove that $U(K)$ is a bialgebra, we must show that $U(K)$ is a coalgebra in $\Alg(\Cc)$. By the adjunction $(U,\Ll)$, the existence of a comultiplication $\Delta_{UK}\in\Hom_{\Alg(\Cc)}(UK,UK\ot UK)$ is equivalent with the existence of a map $\ol\Delta\in\Hom_{\LieAlg(\Cc)}(K,\Ll(UK\ot UK))$. Since $\Ll(UK\ot UK)$ is just the $\Cc$-object $UK\ot UK$ endowed with the commutator Lie algebra structure, we can define $\ol\Delta=\nu_K\ot u_{UK}+ u_{UK}\ot\nu_K$, which is easily checked to be a Lie algebra morphism. Similarly, to define a counit $\epsilon_{UK}\in\Hom_{\Alg(\Cc)}(UK,I)$ is equivalent to finding a morphism $\ol\epsilon\in\Hom_{\LieAlg(\Cc)}(K,\Ll I)$. Remark that the Lie bracket on $\Ll I$ is just the zero map, hence we take $\ol\epsilon=0$. With these structure maps, $UK$ becomes a (cocommutative) bialgebra in $\Cc$. Finally, for $UK$ to be a Hopf algebra we need to define an antipode $S\in\Hom_{\Alg(\Cc)}(UK,UK^{op})$, which is equivalent with finding a morphism $\ol S\in \Hom_{\LieAlg(\Cc)}(K,\Ll(UK^{op}))$. One checks that taking $\ol S=-\nu_K$ leads to the correct structure. 

The other way around, if $H$ is a Hopf algebra, we define the Lie algebra $P(H)$ to be the primitive elements of $H$, endowed with the restriction of the commutator Lie algebra of $H$.

Let us construct the unit $\ol\nu$ of the adjunction $(U,P)$. Given a Lie algebra $(K,\lie)$, consider the unit $\nu_K:(K,\lie)\to \Ll U(K,\lie)$ of the adjunction $(U,\Ll)$.
From the discussion above, it follows that $\Delta_{UK}\circ \nu_K=\ol\Delta$. 
Furthermore, we also have $\ol u_{UK}\circ \nu_K=\ol\Delta$. Hence, $\nu_K:K\to \Ll UK$ 
equalizes the following morphisms
$$\xymatrix{
\Ll UK \ar@<.5ex>[rr]^-{\Delta_{UK}} \ar@<-.5ex>[rr]_-{\ol u_{UK}} && \Ll(UK\ot UK)
}$$
and consequently, we find by the universal property of the equalizer $(PUK,p)$ a unique Lie-algebra morphism $\ol\nu_K:K\to PUK$ such that $\nu_K=p\circ\ol\nu_K$. 

In a similar way, one constructs the counit $\ol\theta$ out of the counit $\theta$ of the adjunction $(U,\Ll)$. The remaining details are left to the reader.
\end{proof}

Recall that if $H$ is a Hopf algebra in $\Cc$, then $\oR H$ is an algebra in $\Dd$, but not necessarily a Hopf algebra in $\Dd$. As $\oR H$ is not a Hopf algebra, it makes no sense to speak about the primitive elements of $\oR H$. However, we can still consider the following equalizer in $\LieAlg(\Dd)$, of which the object part is referred to as the {\em pre-primitive elements} of $\oR H$.
\[
\xymatrix{
P'\ol R H \ar[rr]^-{p'} && \Ll\ol R H \ar@<.5ex>[rr]^-{\ol R\Delta_H} \ar@<-.5ex>[rr]_-{\ol\phi_{H,H}\circ \ol u_{\ol R H}} && \Ll\ol R (H\ot H)
}
\]
As for usual primitives, to compute this equalizer one needs to remark first that the morphisms $\ol R\Delta_H$ and $\ol\phi_{H,H}\circ \ol u_{\ol R H}$ are Lie algebra morphisms.
The following proposition explains how pre-primitive elements can be given an easy interpretation: they coincide exactly with the primitives of $H$, on which the functor $\wR$ is applied.
\begin{proposition}
With notation and conventions as above, for any Hopf algebra $H$ in $\Cc$, there is a natural isomorphism of Lie algebras in $\Dd$
$$P'\oR H\simeq \wR PH$$
\end{proposition}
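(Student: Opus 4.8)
The plan is to recognize both $\wR PH$ and $P'\oR H$ as equalizers of \emph{one and the same} parallel pair of Lie-algebra morphisms in $\Dd$, and then to conclude by the fact that $\wR$ preserves the relevant equalizer, since $R$ (having the left adjoint $L$) is a right adjoint and hence preserves limits. So the argument splits into an identification of the two defining parallel pairs, followed by a preservation statement.

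First I would use the commutator-lifting identity \equref{commutatorlift}, $\Ll_\Dd\oR=\wR\Ll_\Cc$, to identify $\wR\Ll H=\Ll\oR H$ and $\wR\Ll(H\ot H)=\Ll\oR(H\ot H)$. Under these identifications I claim that the two morphisms $\wR\Delta_H,\wR\ol u_H:\Ll\oR H\to\Ll\oR(H\ot H)$ coincide with the pair $\oR\Delta_H$ and $\ol\phi_{H,H}\circ\ol u_{\oR H}$ that cuts out $P'\oR H$. For the first leg this is immediate: as both $\wR$ and $\oR$ commute with the forgetful functors to $\Dd$, the underlying $\Dd$-morphism of $\wR\Delta_H$ is $R\Delta_H=\oR\Delta_H$. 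The second leg is the only genuine computation. Recall $\ol u_{\oR H}=u_{\oR H}\odot\oR H+\oR H\odot u_{\oR H}$ with $u_{\oR H}=Ru_H\circ\phi_0$, and $\ol\phi_{H,H}=\phi_{H,H}$ by \thref{liftingalgebras}. Using naturality of $\phi$ together with the monoidal unit axiom $\phi_{I,H}\circ(\phi_0\odot\oR H)=1_{\oR H}$, one computes $\phi_{H,H}\circ(u_{\oR H}\odot\oR H)=R(u_H\ot H)$, and symmetrically $\phi_{H,H}\circ(\oR H\odot u_{\oR H})=R(H\ot u_H)$. Since $R$ is additive, summing gives $\phi_{H,H}\circ\ol u_{\oR H}=R(u_H\ot H+H\ot u_H)=R\ol u_H=\wR\ol u_H$. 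Thus the parallel pair defining $P'\oR H$ is exactly the $\wR$-image of the pair defining $PH$.

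It then remains to check that $\wR$ carries the equalizer $PH$ to the equalizer $P'\oR H$. By the remark preceding \prref{adjunctionUP}, both equalizers are computed on underlying objects (in $\Cc$, resp.\ $\Dd$) and then equipped with the restricted commutator bracket. As $R$ preserves equalizers, it sends $p:PH\to H$ to the equalizer of $R\Delta_H$ and $R\ol u_H$ in $\Dd$, which by the previous step is precisely the underlying object of $P'\oR H$. The Lie bracket on $\wR PH$ is the restriction of the bracket of $\wR\Ll H=\Ll\oR H$, i.e.\ the very bracket whose restriction defines $P'\oR H$; hence the two Lie algebras agree. Naturality in $H$ is automatic, the comparison being the unique morphism induced between equalizers by the universal property.

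The main obstacle is the middle step, namely verifying that the second leg $\ol\phi_{H,H}\circ\ol u_{\oR H}$ reduces to $R\ol u_H$. This is exactly where additivity of $R$ is indispensable: it is what allows the two unit-insertion terms to be recombined inside $R$, while the reduction of each term separately rests on the monoidal unit constraint and naturality of $\phi$. Everything else is a formal consequence of $R$ being a symmetric monoidal right adjoint.
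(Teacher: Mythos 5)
Your proof is correct and follows essentially the same route as the paper: identify the parallel pair cutting out $P'\oR H$ with the $\wR$-image of the pair defining $PH$ (the key point being $\oR\ol u_H=\ol\phi_{H,H}\circ\ol u_{\oR H}$, which the paper asserts "easily" from naturality of $\ol\phi$ and the construction of $u_{\oR H}$, and which you verify in detail via the monoidal unit axiom and additivity of $R$), then invoke preservation of limits by the right adjoint and uniqueness of equalizers.
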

\begin{proof}
Every right adjoint functor preserves limits. Hence, since $PH$ is the equalizer of $(\Delta_H,\ol u_H)$, $\wR P H$ will still be the equalizer of $(\oR\Delta_H,\oR\ol u_H)$. Furthermore, by the naturality of $\ol\phi$ and the construction of $\ol u_{\oR H}$ (in particular the construction of $u_{\oR H}$, see \thref{liftingalgebras}), we find easily that that $\oR\ol u_H=\ol\phi_{H,H}\circ \ol u_{\oR H}$, so $\wR P H$ and $P' \oR H$ are naturally isomorphic by uniqueness of the equalizer.
\end{proof}

\subsection{Michaelis' theorem in monoidal categories}

With the observations of the previous Section in mind, suppose that the initial pair of adjoint functors $(L,R)$ is both liftable and Lie-liftable
, then we can construct a diagram of functors as follows.
\[
\xymatrix{
{\color{blue}\Bialg}(\Cc) \ar@<.5ex>[rr]^-{\uoR} \ar@<.5ex>[d]^{P} && {\color{blue}\Bialg}(\Dd) \ar@<.5ex>[d]^{P} \ar@<.5ex>[ll]^-{\uoL} \\
\LieAlg(\Cc) \ar@<.5ex>[u]^{U} \ar@<.5ex>[rr]^{\wR} && \LieAlg(\Dd) \ar@<.5ex>[u]^{U} \ar@<.5ex>[ll]^-\wL
}
\]
Our aim is now to understand to which extent this diagram commutes. Firstly, we will prove that for any liftable and Lie-liftable pair $(L,R)$
, and for any Lie algebra $K$ in $\Dd$, we have the natural isomorphism of {\color{blue} bi}algebras $\uoL UK\cong U\wL K$ in $\Cc$, {\color{blue} from which it follows in particular that $\uoL UK$ is always a Hopf algebra, even if the additional conditions of \thref{uoadjunction} are not fulfilled, since we know that $U\wL K$ is a Hopf algebra and the category of Hopf algebras is a full and replete subcategory of the category of bialgebras.} 
We will show this in several steps.

\begin{lemma}\lelabel{uoLwL}
Let $(L,R)$ be a pair of liftable and Lie-liftable adjoint functors between symmetric monoidal categories. For a Lie algebra $K$ in $\Dd$, there is a natural isomorphism of algebras in $\Cc$
$$U\wL K\cong \uoL UK$$
\end{lemma}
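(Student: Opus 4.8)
The plan is to establish the isomorphism $U\wL K\cong \uoL UK$ by exhibiting both sides as solutions to the same universal problem, exploiting the adjunctions we have already assembled. Since both functors are composites of left adjoints, each side is itself a left adjoint applied to $K$; the cleanest route is therefore to compute the representable functor each side co-represents and check the representations agree naturally. Concretely, for a fixed Hopf algebra $H$ in $\Cc$, I would compute $\Hom_{\Hpfalg(\Cc)}(\uoL UK, H)$ and $\Hom_{\Hpfalg(\Cc)}(U\wL K, H)$ and produce a natural bijection between them; by Yoneda this forces the desired isomorphism of Hopf algebras.

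First I would unwind the right-hand side. By \thref{uoadjunction}, the pair $(\uoL,\uoR)$ is adjoint on Hopf algebras, so
\[
\Hom_{\Hpfalg(\Cc)}(\uoL UK, H)\cong \Hom_{\Hpfalg(\Dd)}(UK,\uoR H).
\]
Then, applying the adjunction $(U,P)$ of \prref{adjunctionUP} in $\Dd$, this is naturally isomorphic to
\[
\Hom_{\LieAlg(\Dd)}(K, P\uoR H).
\]
For the left-hand side, the Lie-liftability gives the adjunction $(\wL,\wR)$, so
\[
\Hom_{\Hpfalg(\Cc)}(U\wL K, H)\cong \Hom_{\LieAlg(\Cc)}(\wL K, PH)\cong \Hom_{\LieAlg(\Dd)}(K,\wR PH),
\]
where the first step uses $(U,P)$ in $\Cc$ and the second uses $(\wL,\wR)$. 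Thus both sides reduce to hom-sets out of $K$, and it remains to identify the two targets $P\uoR H$ and $\wR PH$ as Lie algebras in $\Dd$.

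The heart of the argument is therefore the natural isomorphism $P\uoR H\cong \wR PH$. Here I would use the preceding unnamed proposition together with \leref{functorsuo}: the functor $\uoR$ commutes with the forgetful functor to coalgebras, so the underlying coalgebra of $\uoR H$ is $\uR$ of the underlying coalgebra of $H$, and $\uR$ is the right adjoint $\ul R$ of the op-monoidal $\ul L$. Since right adjoints preserve the equalizer defining primitives, and since $\uR$ agrees with $\wR$ on the relevant underlying objects (both being induced by $R$), the equalizer computing $P\uoR H$ matches the image under $\wR$ of the equalizer computing $PH$; this is exactly the content of the pre-primitives proposition, transported from $\oR$ to $\uoR$. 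I expect \textbf{this identification to be the main obstacle}: one must check carefully that the Lie-algebra structure on $P\uoR H$ coming from the commutator bracket of the algebra $\uoR H$ coincides with $\wR$ applied to the commutator bracket on $PH$, which requires the compatibility $\Ll_\Dd\oR=\wR\Ll_\Cc$ of \equref{commutatorlift} together with the fact that $\uoR H$ and $\oR H$ share the same underlying object, and that the two bracket-defining maps used in the respective equalizers are matched by naturality of $\ol\phi$ and the construction of the lifted units. Once this Lie-algebra isomorphism is in place and checked to be natural in $H$, chaining the displayed adjunction isomorphisms yields a natural bijection
\[
\Hom_{\Hpfalg(\Cc)}(\uoL UK, H)\cong \Hom_{\Hpfalg(\Cc)}(U\wL K, H),
\]
and Yoneda delivers the asserted natural isomorphism of Hopf algebras $U\wL K\cong \uoL UK$, completing the proof.
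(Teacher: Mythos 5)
There is a genuine gap here, and within the logical structure of the paper your argument is circular. Your reduction rests entirely on the natural isomorphism $P\uoR H\cong\wR PH$, but that statement \emph{is} \thref{Michaelis}, the abstract form of Michaelis' theorem, which the paper deduces as a consequence of the present lemma (via \thref{uoLwL}) by uniqueness of adjoints; it is the destination, not an available ingredient. Your attempted direct justification of it also fails: you assert that $\uR$ ``agrees with $\wR$ on the relevant underlying objects (both being induced by $R$)'', but only $\oR$ and $\uL$ are constructed as lifts of $R$ and $L$ commuting with the forgetful functors. Their adjoints $\oL$ and $\uR$ are merely postulated to exist (this is what ``liftable'' means) and in general do not commute with the forgetful functors. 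In the motivating example $\uoR H$ is the Sweedler dual $H^\circ$, whose underlying space is not $RH=H^*$; hence the equalizer defining $P\uoR H$ lives on a different object from the one defining the pre-primitives $P'\oR H\cong\wR PH$, and identifying the two is exactly the nontrivial content of Michaelis' theorem, which cannot be obtained by ``right adjoints preserve equalizers'' plus naturality.

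The paper's proof is far more economical and sidesteps all of this. The lemma only claims an isomorphism of \emph{algebras}, so one tests against an arbitrary algebra $A$ in $\Cc$ rather than a Hopf algebra: since $\uoL$ commutes with the forgetful functor to algebras,
$\Hom_{\Alg(\Cc)}(\uoL UK,A)=\Hom_{\Alg(\Cc)}(\oL UK,A)\cong\Hom_{\Alg(\Dd)}(UK,\oR A)\cong\Hom_{\LieAlg(\Dd)}(K,\Ll_\Dd\oR A)=\Hom_{\LieAlg(\Dd)}(K,\wR\Ll_\Cc A)\cong\Hom_{\LieAlg(\Cc)}(\wL K,\Ll_\Cc A)\cong\Hom_{\Alg(\Cc)}(U\wL K,A)$,
using only the adjunctions $(\oL,\oR)$, $(U,\Ll)$, $(\wL,\wR)$ and the identity \equref{commutatorlift} (which holds because $\oR$ \emph{does} commute with the forgetful functors). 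Yoneda then yields the algebra isomorphism; the upgrade to a Hopf algebra isomorphism, which your proposal tries to reach in one step, is deferred to \thref{uoLwL} and requires the separate diagram chase through the pre-primitives. You should restructure your argument along these lines, or at minimum supply an independent proof of $P\uoR H\cong\wR PH$ that does not presuppose the lemma.
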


\begin{proof}
We use the fact that, by definition, $U$ is the left adjoint of $\Ll$. Therefore the isomorphism will follow if we find for any algebra $A$ in $\Cc$ that 
$$\Hom_{\Alg(\Cc)}(\uoL UK, A)\cong \Hom_{\LieAlg(\Cc)}(\wL K,\Ll A).$$
We compute
\begin{eqnarray*}
\Hom_{\Alg(\Cc)}(\uoL UK, A)&=& \Hom_{\Alg(\Cc)}(\oL UK, A)\\
&\cong &\Hom_{\Alg(\Dd)}(UK, \oR A)\\
&\cong &\Hom_{\LieAlg(\Dd)}(K,\Ll\oR A)\\
&\cong &\Hom_{\LieAlg(\Dd)}(K,\wR\Ll A)\\
&\cong &\Hom_{\LieAlg(\Cc)}(\wL K, \Ll A)
\end{eqnarray*}
\end{proof}

\begin{theorem}\thlabel{uoLwL}
Let $(L,R)$ be a pair of liftable and Lie-liftable adjoint functors between symmetric monoidal categories. 
Then for any Lie algebra $K$ in $\Dd$, the natural isomorphism $\omega:U\wL K\to \uoL UK$ from \leref{uoLwL} is a Hopf algebra isomorphism. {\color{blue} In particular, $\uoL UK$ is a Hopf algebra.}
\end{theorem}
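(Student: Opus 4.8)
The plan is to upgrade the algebra isomorphism $\kappa\colon U\wL K\to\uoL UK$ of \leref{uoLwL} to a Hopf algebra isomorphism by checking that it is also a coalgebra morphism; since a bialgebra morphism between Hopf algebras automatically intertwines the antipodes, nothing more is needed. Both sides are genuinely Hopf algebras in $\Cc$ — the source by \prref{adjunctionUP}, the target because $\uoL$ preserves Hopf algebras by \leref{functorsuo} — so it suffices to show that $\kappa$ commutes with the comultiplications and with the counits. One could hope to obtain this for free: $U\wL$ is left adjoint to $\wR P$ and $\uoL U$ is left adjoint to $P\uoR$ (as functors $\LieAlg(\Dd)\to\Hpfalg(\Cc)$), so an identification $\wR P\cong P\uoR$ would force a Hopf-level isomorphism by uniqueness of adjoints. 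As that identification is not yet available at this stage, I would argue directly.

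The main tool is the universal property of $U\wL K$ as the value of the left adjoint $U$ of $\Ll$: for any algebra $B$ in $\Cc$, two algebra morphisms $U\wL K\to B$ are equal as soon as they agree after precomposition with the unit $\nu_{\wL K}\colon\wL K\to\Ll U\wL K$. I would apply this with $B=\uoL UK\ot\uoL UK$ to the two algebra morphisms $\Delta_{\uoL UK}\circ\kappa$ and $(\kappa\ot\kappa)\circ\Delta_{U\wL K}$ (both are algebra maps, since $\kappa$ and the comultiplications are). Writing $\iota:=\kappa\circ\nu_{\wL K}\colon\wL K\to\uoL UK$, and recalling from the proof of \prref{adjunctionUP} that $\Delta_{U\wL K}\circ\nu_{\wL K}=\nu_{\wL K}\ot u+u\ot\nu_{\wL K}$ and $\epsilon_{U\wL K}\circ\nu_{\wL K}=0$, the comultiplication and counit compatibilities collapse (using that $\kappa$ is unital) to the single statement that $\iota$ is primitive:
$$\Delta_{\uoL UK}\circ\iota=\iota\ot u+u\ot\iota,\qquad \epsilon_{\uoL UK}\circ\iota=0.$$

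To verify this I would first make $\iota$ explicit by tracing the identity $1_{\uoL UK}$ backwards through the chain of natural isomorphisms that defines $\kappa$ in \leref{uoLwL}; this exhibits $\iota$ as the $(\wL,\wR)$-transpose of the Lie morphism $\Ll\eta_{UK}\circ\nu^{\Dd}_K\colon K\to\Ll\oR\,\oL UK=\wR\Ll\,\oL UK$, where $\eta$ is the unit of the lifted adjunction $(\oL,\oR)$, $\nu^{\Dd}$ is the unit of $(U,\Ll)$ in $\Dd$, and the last equality is \equref{commutatorlift} evaluated at $\oL UK$. I would then substitute the explicit comultiplication $\Delta_{\uoL UK}=\ol\psi_{UK,UK}\circ\oL\Delta_{UK}$ and counit $\epsilon_{\uoL UK}=\ol\psi_0\circ\oL\epsilon_{UK}$ furnished by \leref{functorsuo}, together with the primitivity $\Delta_{UK}\circ\nu^{\Dd}_K=\nu^{\Dd}_K\ot u+u\ot\nu^{\Dd}_K$ of $\nu^\Dd_K$ in $UK$.

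The hard part is precisely this last computation: one must show that applying the op-monoidal functor $\oL$ to the primitivity relation in $UK$ and post-composing with $\ol\psi_{UK,UK}$ reproduces the primitivity relation for $\iota$ in $\oL UK$. This is a coherence bookkeeping that rests on the op-monoidal normalizations of $(\ol\psi,\ol\psi_0)$ governing $\ol\psi_{UK,UK}$ against the unit $u$ via $\ol\psi_0$, and on the compatibility of $\ol\psi$ with the counit of $(\oL,\oR)$ recorded in the second commuting square of \thref{monoidal&adjoint} (applied to the lifted adjunction $(\oL,\oR)$), which is what allows the $\oL$-image of a ``tensor with the unit'' to be pushed through $\ol\psi$. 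Threading this through the $(\wL,\wR)$-transpose — once more via \equref{commutatorlift} — is the only genuinely delicate step; the counit identity is then entirely analogous and simpler, and the Hopf algebra isomorphism follows.
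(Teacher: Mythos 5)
Your proposal follows essentially the same route as the paper: both reduce, via the universal property of the adjunction $(U,\Ll)$, to showing that the transpose $\iota=\kappa\circ\nu_{\wL K}$ (the paper's $\ul\kappa$) is primitive in $\uoL UK$, and both verify this by transposing across $(\wL,\wR)$ to the $\Dd$-side and combining naturality of the unit $\ol\alpha$ of $(\oL,\oR)$, the compatibility squares of \thref{monoidal&adjoint}, and the primitivity of $\nu_K$ in $UK$. The computation you defer as ``coherence bookkeeping'' is exactly the displayed calculation in the paper's proof (packaged there through the pre-primitives $P'\oR\uoL UK\cong\wR P\uoL UK$ and the universal property of the equalizer), so the plan is sound.
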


\begin{proof}
From the proof of \leref{uoLwL}, we know that the isomorphism $\omega$ corresponds to a morphism $\ul\omega:\wL K\to \Ll\uoL UK$ that is given explicitly by
\[
\xymatrix{
\wL K \ar[rrr]^-{\ul\omega} \ar[d]_{\wL\nu_K} &&& \Ll\oL UK \\
\wL \Ll UK \ar[rr]^-{\wL\Ll\ol\alpha_{UK}} && \wL\Ll \oR\oL UK \ar@{=}[r] & \wL\wR\Ll\oL UK \ar[u]_-{\widetilde\beta_{\Ll\oL UK}} 
}
\]
Remark (see \leref{functorsuo}) that $\oL UK$ can be endowed with the structure of a 
{\color{blue}bi}algebra -that we denoted by $\uoL UK$- whose comultiplication is given by $\Delta_{\uoL UK}=\ol\psi_{UK,UK}\circ \oL\Delta_{UK}$. Since $\Hom_{{\color{blue}\Bialg}}(U\wL K,\uoL UK)\cong\Hom_{\LieAlg}(\wL K,P\uoL UK)$, in order to obtain that $\omega$ is a {\color{blue}bi}algebra morphism, we need to prove that the image of $\ul\omega$ lies in $P\uoL UK$, or more precisely that $\ul\omega=p\circ\ul{\ol\omega}$ for some morphism $\ul{\ol\omega}:\wL K\to P\uoL UK$. From \prref{adjunctionUP}, we know that $\nu_K$ splits as $p\circ \ol\nu_K$. Now we can compute 
\begin{eqnarray*}
\oR\Delta_{\uoL UK}\circ \ol\alpha_{UK}\circ p&=&
\oR\ol\psi_{UK,UK}\circ \ol R\oL\Delta_{UK} \circ \ol\alpha_{UK}\circ p\\
&=&\oR\ol\psi_{UK,UK}\circ \ol\alpha_{(UK\ot UK)}\circ \Delta_{UK}\circ p\\
&=& \ol\phi_{\oL UK,\oL UK}\circ (\ol\alpha_{UK}\ot \ol \alpha_{UK})\circ \Delta_{UK}\circ p\\
&=& \ol\phi_{\oL UK,\oL UK}\circ (\ol\alpha_{UK}\ot \ol \alpha_{UK})\circ \ol u_{UK} \circ p\\
&=& \ol\phi_{\oL UK,\oL UK}\circ \ol u_{\oR\oL UK}\circ \ol\alpha_{UK} \circ p\\
\end{eqnarray*}
Here we used the naturality of $\ol\alpha$ in the second and in the last equality, \thref{monoidal&adjoint} in the third equality and the equalizing property of $p$ in the fourth equality.
It follows from the above computations that the map $\ol\alpha_{UK}\circ p:PUK\to \oR\uoL UK$ equalizes the pair $(\oR\Delta_{\uoL UK},\ol\phi_{\oL UK,\oL UK}\circ \ol u_{\oR\oL UK})$. Since the equalizer of this pair is given exactly by the pre-primitives $(P'\oR\uoL UK,p')$, we obtain by the universal property of the equalizer a unique morphism $\ol{\ol\alpha}:PUK\to P'\oR\oL UK$ with the property that $\ol\alpha_{UK}\circ p=p'\circ \ol{\ol \alpha}$. Therefore, we find that $\ul\omega=p\circ\ol{\ul\omega}$ as in the following commutative diagram
\[
\xymatrix{
\wL K \ar[rrr]^-{\ul{\ol\omega}} \ar[d]_{\wL\ol\nu_K} &&& P\uoL UK \ar[rr]^-{p} && \Ll\uoL UK=\Ll\oL UK\\
\wL P UK \ar[d]_{\wL p} \ar[rr]^-{\wL\ol{\ol\alpha}_{UK}} && \wL P' \oR\oL UK \ar[d]_{\wL p'} \ar@{=}[r] & \wL\wR P\uoL UK \ar[u]_-{\widetilde\beta_{P\uoL UK}} \ar[d]^{\wL\wR p} \\
\wL \Ll UK  \ar[rr]^-{\wL\Ll\ol\alpha_{UK}} && \wL\Ll \oR\oL UK \ar@{=}[r] & \wL\wR\Ll\oL UK \ar[rruu]_-{\widetilde\beta_{\Ll\oL UK}}
}
\]
In conclusion, by considering the corestriction $\ol{\ul\omega}$ of $\ol\omega$, we find that the isomorphism $\omega:U\wL K\to \uoL UK$ is a {\color{blue}bi}algebra isomorphism. {\color{blue} As $U\wL K$ is moreover a Hopf algebra and the $\Hpfalg$ is a full and replete subcategory of $\Bialg$, it follows that $\uoL UK$ is also a Hopf algebra and $\omega$ is a Hopf algebra isomorphism.}
\end{proof}

\begin{theorem}\thlabel{Michaelis}
Let $(L,R)$ be a pair of liftable and Lie-liftable adjoint functors between symmetric monoidal categories
.
Let $H$ be a Hopf algebra in $\Cc$, then there is an isomorphism of Lie algebras in $\Dd$
$$P\uoR H\cong \wR P H$$
\end{theorem}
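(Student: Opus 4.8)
The plan is to prove the statement abstractly, recognizing $P\uoR H\cong \wR PH$ as the adjoint (mate) of the isomorphism $\uoL UK\cong U\wL K$ already established in \thref{uoLwL}. Rather than transporting that isomorphism by hand through the relevant equalizers, I would argue by Yoneda: it suffices to produce, for every Lie algebra $K$ in $\Dd$, a bijection
$$\Hom_{\LieAlg(\Dd)}(K,P\uoR H)\cong \Hom_{\LieAlg(\Dd)}(K,\wR PH)$$
natural in $K$ (and in $H$); an isomorphism of Lie algebras in $\Dd$, natural in $H$, then follows from the Yoneda lemma in $\LieAlg(\Dd)$.

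First I would assemble the four adjunctions at our disposal: the enveloping/primitive adjunction $(U,P)$ on both sides (\prref{adjunctionUP}), the lifted Hopf adjunction $(\uoL,\uoR)$ (\thref{uoadjunction}), and the Lie-lifted adjunction $(\wL,\wR)$ furnished by the Lie-liftability hypothesis. Chaining the hom-set isomorphisms of these adjunctions yields, for $H\in\Hpfalg(\Cc)$ and $K\in\LieAlg(\Dd)$,
\begin{align*}
\Hom_{\LieAlg(\Dd)}(K,P\uoR H)
&\cong \Hom_{\Hpfalg(\Dd)}(UK,\uoR H)\\
&\cong \Hom_{\Hpfalg(\Cc)}(\uoL UK,H)\\
&\cong \Hom_{\Hpfalg(\Cc)}(U\wL K,H)\\
&\cong \Hom_{\LieAlg(\Cc)}(\wL K,PH)\\
&\cong \Hom_{\LieAlg(\Dd)}(K,\wR PH),
\end{align*}
where the successive isomorphisms come from the adjunction $(U,P)$, from $(\uoL,\uoR)$ of \thref{uoadjunction}, from the natural Hopf-algebra isomorphism $\uoL UK\cong U\wL K$ of \thref{uoLwL}, from $(U,P)$ again, and from $(\wL,\wR)$. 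The third step is exactly where \thref{uoLwL} enters: it supplies a \emph{Hopf-algebra} isomorphism $\uoL UK\cong U\wL K$, natural in $K$, so that precomposition produces the displayed bijection of hom-sets inside $\Hpfalg(\Cc)$.

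With the natural bijection in hand I would invoke Yoneda, for each fixed $H$, to conclude $P\uoR H\cong \wR PH$ as objects of $\LieAlg(\Dd)$, i.e.\ as Lie algebras in $\Dd$; naturality in $H$ is inherited from the naturality of every isomorphism in the chain. I expect the only genuine obstacle to be bookkeeping rather than mathematics: one must verify that each isomorphism is natural in the correct variable and, crucially, that the isomorphism of \thref{uoLwL} is one of Hopf algebras (and not merely of algebras, as already in \leref{uoLwL}), since it is precomposed within $\Hom_{\Hpfalg(\Cc)}$. This is precisely the content upgraded in passing from \leref{uoLwL} to \thref{uoLwL}, so the hard analytic work has in fact been done there. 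As a complementary remark, combining with the pre-primitives proposition preceding this theorem ($\wR PH\cong P'\oR H$) shows that $P\uoR H$ may equivalently be identified with the pre-primitive elements of $\oR H$.
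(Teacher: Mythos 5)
Your argument is correct and is essentially the paper's own proof: the paper also observes that $P\uoR$ and $\wR P$ are right adjoints of $\uoL U$ and $U\wL$ respectively and concludes by uniqueness of adjoints from the natural isomorphism $\uoL U\cong U\wL$ of \thref{uoLwL}; your chain of hom-set bijections plus Yoneda is just the explicit form of that uniqueness argument. No gap.
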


\begin{proof}
By composition of adjunctions, we know that $P\uoR$ is a right adjoint of $\uoL U$ and $\wR P$ is a right adjoint of $U\wL$. As there is a natural isomorphism $\uoL U\simeq U\wL$ (see \thref{uoLwL}), the statement follows by the uniqueness of the adjoint.
\end{proof}

\section{The classical result of Michaelis}\selabel{Mich}

In this Section, we show that our theorem is a proper generalization of Michaelis' original result.

Consider the abelian symmetric monoidal category of vector spaces $\Dd=(\Vect,\ot,k)$, the symmetry being the twist. We put $\Cc$ to be the dual category of $\Dd$, where both arrows and the tensor product are conversed, that is $\Cc=(\Vect^{op},\ot^{op},k)$, i.e.
$$\Hom_\Cc(X,Y)=\Hom_\Dd(Y,X), \qquad X\ot^{op} Y= Y\ot X,$$
for any two vector spaces $X,Y$. Taking the vector space dual $X^*=\Hom_k(X,k)$ then provides a (covariant) adjunction
$$
\xymatrix{
\Cc \ar@<.5ex>[rr]^-{R=(-)^*} && \Dd \ar@<.5ex>[ll]^-{L=(-)^*}.
}
$$
Remark that if $R$ and $L$ are considered as endofunctors on $\Vect$, this is a contravariant adjunction. This is expressed by the following natural isomorphism
$$\Hom_{\Vect^{op}}(Y^*,X)=\Hom_{\Vect}(X,Y^*)\cong \Hom_{\Vect}(Y,X^*)\quad (\cong \Hom_{\Vect}(X\ot Y,k)),$$
for $X,Y\in \Vect$ (but considered as $X\in\Cc$ and $Y\in \Dd$). Both units are given by the same linear map, known as the canonical injection of a vector space into its bidual space
$$\iota_X:X\to X^{**}.$$

On the one hand, algebras, coalgebras, bialgebras, Hopf algebras, Lie algebras and Lie coalgebras in $\Dd=\Vect$ clearly are the classical ones. On the other hand, algebras, coalgebras, bialgebras, Hopf algebras, Lie algebras and Lie coalgebras in the opposite category $\Cc$ are, respectively, classical coalgebras, algebras, bialgebras, Hopf algebras, Lie coalgebras and Lie algebras. Remark that, as classically known, bialgebras and Hopf algebras are the only self-dual concepts among these. 

Furthermore, we know that for any two vector spaces $X$ and $Y$, there is a natural map
$$X^*\ot Y^*\to (Y\ot X)^*.$$
This means exactly that the functor $(-)^*$, considered as the covariant functor $R$, is monoidal, or equivalently, that the covariant functor $L$ is op-monoidal. By applying \thref{liftingalgebras} we find that $R$ induces a functor 
$\ol R:\Alg(\Cc)\to \Alg(\Dd)$. Explicitly, we obtain in our situation the well-known functor
$$(-)^*:\CoAlg_k\to \Alg_k$$
that computes the dual algebra of a coalgebra. Remark that the functor $\uL:\CoAlg(\Dd)\to\CoAlg(\Cc)$ is in fact exactly the same functor. A classical result in coalgebra theory tells us that a left adjoint for the functor $\oR$ is given by the functor that assigns the so-called finite dual coalgebra $A^\circ$ to a $k$-algebra $A$, see e.g. \cite[Theorem 2.3.14]{Abe}. Remark also that, by contravariance of the initial functor $(-)^*$, the same construction gives a left adjoint for the functor $\oR$.
Hence the pair of adjoint functors $(L,R)$ induced by taking vector space duals is a liftable pair (in the terminology of \reref{liftable}) and we can apply the results obtained in \seref{liftings}. 
{\color{blue}Remark that in this case, the natural transformation $\kappa: L\circ U_\Dd\to U_\Cc \circ \oL$ from \equref{kappa}, is given exactly by the embedding of the Sweeder dual into the linear dual, which is of course an epimorphism in $\Vect^{op}$. Hence the additional conditions of \thref{uoadjunction} are automatically fulfilled.}
In particular, we recover the result that the finite dual induces an auto-adjunction on the category of Hopf algebras (see \cite{Abe}, page 87).

Finally, if we consider classical $k$-Lie algebras, we find moreover that $(-)^*$ is also Lie-liftable. Indeed, recall from \cite{Mich} that given $C$ is a Lie coalgebra, there is a natural Lie algebra structure on the dual vector space $C^*$ of $C$. Hence we find a functor $\LieCoAlg_k\to \LieAlg_k$ or, in our notation from above, a functor $\wR:\LieAlg(\Cc)\to \LieAlg(\Dd)$, and this functor admits a left adjoint that computes for any $k$-Lie algebra $K$ the Lie coalgebra on the space $K^\circ$ of all functionals $f\in K^*$ that satisfy 
$$f^*([x,y])=\sum_if_i(x)g_i(y)$$
for certain $f_i,g_i\in K^*$. We then obtain from \thref{uoLwL} that 
$$U^c(K^\circ)\cong (UK)^\circ$$
where $U^c$ denotes the universal enveloping Hopf algebra of a Lie coalgebra. This result was proven in \cite[page 31-32]{Mich}. 
Our result \thref{Michaelis} comes out as 
$$P(H^{\circ})\cong Q(H)^*$$
which is the original result of Michaelis \cite{Mich2}, where $Q(H)$ denotes the Lie coalgebra of indecomposables of a Hopf algebra. More precisely, $Q(H)=I/I^2$, where $I=\ker\varepsilon$.

\begin{remark}\relabel{gegradeerdeversieMichaelis}
Putting $\Dd=(\Vect^{\ZZ_{2}},\ot,k)$ and $\Cc=({\Vect^{\ZZ_{2}}}^{op},\ot^{op},k)$ ($\Vect^{\ZZ_{2}}$ being the category of $\ZZ_{2}$-graded vector spaces and degree-preserving morphisms), we now consider the symmetry in both categories to be the following one: for any pair of objects $(V,W)$ in $\Vect^{\ZZ_2}$, we put $$c_{V,W}:V\ot W\to W\ot V; v\ot w\mapsto (-1)^{|v||w|}w\ot v.$$
One then computes easily that algebras in $\Dd$ are precisely superalgebras, Lie algebras in $\Dd$ are exactly Lie superalgebras, etc. Now, replacing $X^{*}$ by $\HOM_{k}(X,k)$ ($\HOM_{k}(-,k)$ being the right adjoint to the endofunctor $-\ot k$ on $\Vect^{\ZZ_{2}}$) in the above, we get a ``super'' version of Michaelis' theorem.
\\Now recall the Hom-construction from \cite{CG}. Using same notation as in the cited paper, put $\Dd=\widetilde{\Hh}(\Vect)$ and $\Cc=\widetilde{\Hh}(\Vect)^{op}$. Propositions 1.1 and 1.2 in \cite{CG} assert that $\Dd$ and $\Cc$ are both additive, symmetric (non-strict) monoidal categories. Taking also Proposition 1.6 into account, one can develop a ``Hom'' version of Michaelis' theorem, along the same lines as here above.
\end{remark}
\begin{remark}\relabel{Prost}
\textcolor{blue}{Instead of considering vector spaces over a field
, let us take the category of modules $\Mm_{R}$ over an arbitrary commutative ring $R$. Just as in the example elaborated above, taking duals determines a liftable pair of functors also here, so we still get a lifted adjunction at the bialgebra level.\\
However, for the lifted adjunction to map $R$-Hopf algebras to $R$-Hopf algebras, extra conditions are necessary in order for the conditions of \thref{uoadjunction} to be fulfilled. \cite[Theorem 51]{PS} asserts that when $R$ is noetherian and absolutely flat, these conditions are met.} {\color{blue}Remark however, that the theorem of Michaelis still holds in this setting, as it follows from our general theory (see \thref{uoLwL}) that even in this case $\uoL UK$ is still a Hopf algebra for any Lie algebra $K$.}
\end{remark}
\section{Applications to Turaev's Hopf group-coalgebras}\selabel{Tur}

\subsection{Dualities for Hopf group-coalgebras}\selabel{Fam}

A classical construction in category theory is the so-called category of families over a given category $\Aa$, which we will denote as $\Fam(\Aa)$. Recall that objects in this category are pairs $(I,\{X_i\}_{i\in I})$, where $I$ is any (index) set and $\{X_i\}_{i\in I}$ are objects in $\Aa$, indexed by the set $I$. We will write briefly $\ul X=(I,X_i)$. We call the vector space $X_i$ the {\em homogeneous part of degree} $i$ of the object $\ul X$. \index{part!homogeneous}
 A morphism $\ul\phi:\ul X\to \ul Y=(J,Y_j)$ is a pair $\ul\phi=(f,\phi_i)$, where $f:I\to J$ is just a map and $\phi_i:X_i\to Y_{f(i)}$ is a family of morphisms in $\Aa$, indexed by the set $I$. Furthermore, if $\Aa$ is a monoidal category with product $\ot$ and unit $k$, then $\Fam(\Aa)$ is a monoidal category as well, with product  given by
\begin{eqnarray*}
\ul X\ul \ot \ul Y&=&(I\times J,\{X_i\ot Y_j\}_{(i,j)\in I\times J})\\
\ul \phi\ul\ot\ul\psi&=&(f\times g, \{\phi_i\ot\psi_j\}_{(i,j)\in I\times J})
\end{eqnarray*}
for objects $\ul X=(I,X_i)$ and $\ul Y=(J,Y_j)$ in $\Fam(\Aa)$ and for morphisms $\ul\phi=(f,\phi_i):\ul X\to \ul X'$ and $\ul\psi=(g,\psi_j):\ul Y\to \ul Y'$.
The unit is $\ul k=(\{*\},k)$, where $\{*\}$ is a fixed singleton set. Moreover, if $\Aa$ is braided or symmetric with braiding $c_{-,-}$, then $\Fam(\Aa)$ is braided or symmetric as well, with braiding 
$$c_{\ul X,\ul Y}=(\tau_{I,J},c_{X_i,Y_j}),$$
where $\tau$ denotes the usual twist in $\Set$. 

We denote $\Fam(\Aa^{op})^{op}=\Maf(\Aa)$. Remark that the objects in $\Fam(\Aa)$ and $\Maf(\Aa)$ are the same, but morphisms in $\Maf(\Aa)$ are pairs $(f,\phi_j):\ul X\to \ul Y$, where $f:J\to I$ and $\phi_j:X_{f(j)}\to Y_{j}$.

In \cite{CaeDel} algebras, coalgebras, bialgebras and Hopf algebras in $\Fam(\Aa)$ and $\Maf(\Aa)$ were computed. 
We will briefly repeat this here again, pointing out some interesting observations that we believe were omitted in the paper cited above, but we refer the reader for more detailed computations to that paper. 

\subsubsection{Algebras, coalgebras and Hopf algebras in $\Fam(\Aa)$}
First observe that we have obvious forgetful strong monoidal functors $\Fam(\Aa)\to \Set$ and $\Maf(\Aa)\to \Set^{op}$. Hence, if $(I,A_i)$ is an algebra with multiplication $(m,\mu_{i,j})$ 
and unit $(e,\eta)$ (resp. coalgebra with comultiplication $(d,\Delta_i)$ and counit $(p,\epsilon_i)$) in $\Fam(\Aa)$, then $I$ will be a monoid in $\Set$ with multiplication $m$ and unit $e$ (resp. just a set enriched with the trivial diagonal map $d:I\to I\times I$ and the projection $p:I\to \{*\}$). One easily checks that the axioms on morphisms in $\Fam(\Aa)$ imply that 
$$\mu_{i,j} : A_i \ot A_j \to A_{ij} \quad {\rm and} \quad \eta : k \to A_e$$ 
satisfy
\begin{eqnarray*}
\mu_{ij,\ell}\circ (\mu_{i,j} \ot A_\ell ) &=& \mu_{i,j\ell} \circ (A_i \ot \mu_{j,\ell})\\
\mu_{i,e}\circ (A_i \ot \eta) = &A_i& = \mu_{e,i} \circ ( \ot A_i )
\end{eqnarray*}
for all $i,j,\ell\in I$ (respectively all $(A_i,\Delta_i,\epsilon_i)$ are coalgebras in $\Aa$). In case $I$ is a group, the pair $(I,A_i)$ was called an $I$-algebra (group-algebra, or monoid-algebra in the general case) in \cite{Zun} (this notion should not be confused with a group algebra $kG$ !). If $\Aa$ has coproducts and if endofunctors of the form $-\ot X$ preserve coproducts (e.g. when $\Aa$ is closed monoidal), then $(I,A_i)$ is an algebra in $\Fam(\Aa)$ exactly if $\coprod_{i\in I}A_i$ is an algebra graded over the monoid $I$. 
In the coalgebra case, we just find a family of coalgebras $(A_i,\Delta_i,\epsilon_i)$ indexed by a set $I$ without any further condition. Since an arbitrary coproduct of coalgebras is again a coalgebra, we find that $\coprod_{i\in I}A_i$ is a coalgebra in $\Aa$.
Combining the above, given a bialgebra $(I,A_i)$ in $\Fam(\Aa)$, we find that $I$ is a monoid and $A=\coprod_{i\in I}A_i$ is a bialgebra whose algebra is graded by a monoid and whose homogeneous spaces are subcoalgebras and the morphisms $\mu_{i,j}$ are coalgebra morphisms. Again in \cite{Zun}, the pair $(I,A_i)$ was termed a semi-Hopf $I$-algebra (semi-Hopf group-algebra or semi-Hopf monoid-algebra in the general case) in this situation, provided $I$ is a group.
Finally, let $(I,A_i)$ be a Hopf algebra in $\Fam(\Aa)$.  Applying the forgetful functor to $\Set$, we find that $I$ must be group. Furthermore $A=\coprod_{i\in I}A_i$ is a bialgebra as before, equipped with an antipode $S:A\to A$ that decomposes on the homogeneous parts as
$$S_i:A_i\to A_{i^{-1}}$$
satisfying 
$$\mu_{i^{-1},i} \circ (S_i \ot A_i) \circ \Delta_i = \eta \circ \epsilon_i = \mu_{i,i^{-1}} \circ (A_i \ot S_i) \circ \Delta_i$$
Remark that the subspace $H_e$, where $e$ denotes the unit of the group $I$, is a Hopf algebra in the usual sense.
In \cite{Zun}, $(I,A_i)$ is called a Hopf $I$-algebra (Hopf group-algebra). 

\begin{remark}\relabel{irred}
We already remarked that any Hopf group $I$-algebra can be viewed as a (usual) Hopf algebra, that is graded as an algebra over the group $I$. We would like to remark that conversely, given any Hopf algebra $H$, one can consider the group of grouplike elements $G(H)$. Moreover, it is well-known that a cocommutative coalgebra can be decomposed into a direct sum of its irreducible components (see e.g. \cite{Mont:book}, Theorem 5.6.3). Now, letting $H_x$ be the irreducible component of a Hopf algebra $H$ containing the grouplike element $x$, the multiplication of $H$ induces a strong grading over these subspaces (i.e.\ $H_xH_y=H_{xy}$) and the antipode can be restricted to morphisms $S:H_{x}\to H_{x^{-1}}$ (cf. \cite{Mont:book}, Corollary 5.6.4). Combining these results, we find that a cocommutative Hopf algebra provides examples of Hopf group $G$-algebras (where the group $G$ is the subgroup of $G(H)$ that consists of classes of grouplike elements that are in the same irreducible component). 
Hence, Hopf group-algebras could be understood as a non-cocommutative generalization of the above, although the coalgebras are of course not supposed to be irreducible.
\end{remark}

\subsubsection{Algebras, coalgebras and Hopf algebras in $\Maf(\Aa)$}
Dually, consider the category $\Maf(\Aa)$ and let $\ul A=(I,A_i)$ be an algebra therein. As we now have a strict monoidal functor $\Maf(\Aa)\to \Set^{op}$, we obtain that $\ul A$ consists of a family of algebras $A_i$ indexed by the set $I$, that is considered as a coalgebra in $\Set$. 
If $(I,A_i)$ is a coalgebra in $\Maf(\Aa)$, then we find that $I$ is a monoid in $\Set$, and we have a family of comultiplicative morphisms
$$\Delta_{i,j}:A_{ij}\to A_i\ot A_j$$
and a counit map
$$\epsilon:A_e\to k$$
satisfying
\begin{eqnarray*}
\Delta^2_{i,j,\ell}:=(\Delta_{i,j}\ot A_\ell)\circ \Delta_{ij,\ell}&=&(A_i\ot \Delta_{j,\ell})\circ \Delta_{i,j\ell}\\
(A_i\ot \epsilon) \circ \Delta_{i,e} = &A_i& = (\epsilon\ot A_i)\circ \Delta_{e,i}
\end{eqnarray*}
If $I$ is a group, the pair $(I,A_i)$ is called an $I$-coalgebra (group-coalgebra, or monoid-coalgebra in the general case) in \cite{Tur:book} (see also unpublished \cite{Tur}).
Similarly to the dual case, if $\Aa$ has products and if endofunctors of the form $-\ot X$ preserve products, then $\prod_{i\in I}A_i$ becomes a coalgebra in $\Aa$, that is ``cograded'' over $I$. However, the condition that tensor products preserve products is rather unnatural in the infinite-dimensional case. Hence, we advocate a different approach avoiding this condition. In an abelian category, we have a canonical monomorphism $\iota:\coprod_{i\in I}A_i\to \prod_{i\in I}A_i$. We denote the projections of the product $\prod_{i\in I}A_i$ by $\pi_i$ and the projections of the product $\prod_{(j,\ell)\in I\times I} A_j\ot A_\ell$ by $\pi_{j,\ell}$. Using the universal property of the product, the morphisms $\Delta_{i,j}$ induce a unique global comultiplication $\Delta:\coprod_{i\in I}A_i\to \prod_{(j,\ell)\in I\times I} A_j\ot A_\ell$ that renders the following diagram commutative
\[
\xymatrix{
\coprod_{i\in I}A_i  \ar[rr]^-{\Delta} \ar[d]_-\iota && \prod_{(j,\ell)\in I\times I} A_j\ot A_\ell \ar[dd]^{\pi_{j,\ell}}\\
\prod_{i\in I}A_i \ar[d]^{\pi_{j\ell}} \\
A_{j\ell} \ar[rr]^{\Delta_{j,\ell}} && A_j\ot A_\ell
}
\]
One can verify that this morphism is coassociative and counital in an appropriate way. In \cite{JV:KHA} it is shown that such a coalgebra is a coalgebra in an appropriately constructed Kleisli category; therefore we call $A$ a Kleisli coalgebra. 

Suppose now that $(I,A_i)$ is a bialgebra in $\Maf(\Aa)$. Then this means that $I$ is a monoid and 
$(I,A_i)$ is a monoid-coalgebra in the sense above, such that all $A_i$ are unital algebras and the counit $\epsilon$ and the comultiplicative maps $\Delta_{i,j}$ are algebra morphisms.
In this case, 
$(I,A_i)$ was called a semi-Hopf $I$-coalgebra (semi-Hopf group-coalgebra, or semi-Hopf monoid-coalgebra in the general case) in \cite{Tur}.

If $(I,A_i)$ is moreover a Hopf algebra in $\Maf(\Aa)$, then $I$ is a group and we additionally have morphisms $S_i:A_{i^{-1}}\to A_i$ satisfying
$$\mu_i \circ (S_i \ot A_i) \circ \Delta_{i^{-1},i} = \eta_i \circ \epsilon = \mu_i \circ (A_i \ot S_i) \circ \Delta_{i,i^{-1}}$$
In this situation, $(I,A_i)$ is called 
a Hopf $I$-coalgebra (Hopf group-coalgebra) (see \cite{Tur}).

It was shown in \cite{ADVanDaele} (see also \cite{JV:KHA}) that, if $(I,A_i)$ is a Hopf group-coalgebra, then $\coprod_iA_i$ is a multiplier Hopf algebra. To see this, one should observe that if $I$ is an infinite group, $A=\coprod_{i\in I}A_i$ is a non-unital algebra, though an algebra with central local units. Consequently, the multiplier algebra of $A$ is exactly given by $\MM(A)=\prod_{i\in I}A_i$. Therefore, the induced coproduct on $A$, as defined above on the underlying Kleisli coalgebra, is a map $\Delta:A\to \MM(A\ot A)$.

The above can be summarized in the following table.\\ \\
\begin{tabular}{|l||p{6.2cm}|p{6.2cm}|}
\hline
& $\Fam(\Aa)$ & $\Maf(\Aa)$\\
\hline \hline
Algebra & 
$I$-algebra (with $I$ a monoid)
& Collection of algebras indexed by a set  \\
\hline
Coalgebra & Collection of coalgebras indexed by a set & 
$I$-coalgebra (with $I$ a monoid)\\
\hline
Bialgebra & 
Semi-Hopf monoid-algebra
& Semi Hopf monoid-coalgebra\\
\hline
Hopf algebra & 
Hopf group-algebra
& 
Hopf group-coalgebra\\
\hline
\end{tabular}

\subsubsection{Equivalence between locally finite Hopf group-algebras and locally finite Hopf group-coalgebras}

Let $\Aa$ be a braided monoidal category. In this note, we will say that $\Aa$ is a {\em pre-rigid category}\index{category!pre-rigid}, if for any object $X$, there exists an object $X^*$ and a morphism $\ev_X:X^*\ot X\to I$ with the following universal property. For any other pair $(T,t:T\ot X\to I)$ there exists a unique morphism $u:T\to X^*$ such that $\ev_X\circ (u\ot X)=t$. Clearly, the basic example is to take $\Aa=\Vect$ endowed with usual dual spaces.
We then obtain the following result.

\begin{proposition}\prlabel{prerigid}
If $\Aa$ is a pre-rigid {\color{blue}symmetric} category, then $(-)^*:\Aa^{op}\to\Aa$ is a self-adjoint braided monoidal functor.
\end{proposition}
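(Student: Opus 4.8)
The plan is to treat the universal property of $\ev_X$ as a representability statement and to derive everything from the resulting uniqueness, never appealing to a coevaluation morphism (which need not exist when $\Aa$ is only pre-rigid). Writing $F:=(-)^*$, the defining property of $(X^*,\ev_X)$ says exactly that $u\mapsto \ev_X\circ(u\ot X)$ is a bijection
$$\lambda_{T,X}\colon \Hom_\Aa(T,X^*)\xrightarrow{\ \cong\ }\Hom_\Aa(T\ot X,I),$$
natural in $T$. First I would promote $F$ to a functor $\Aa^{op}\to\Aa$: for $f\colon X\to Y$ in $\Aa$, define $f^*\colon Y^*\to X^*$ to be the unique morphism with $\ev_X\circ(f^*\ot X)=\ev_Y\circ(Y^*\ot f)$, i.e. $f^*=\lambda_{Y^*,X}^{-1}\bigl(\ev_Y\circ(Y^*\ot f)\bigr)$. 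Preservation of identities and of composition then follows at once from the uniqueness clause in the universal property.

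Second, the self-adjunction. Composing $\lambda$ with the braiding $c$ of $\Aa$ gives, for all $X,Y$, a chain of natural bijections
$$\Hom_\Aa(X,Y^*)\ \cong\ \Hom_\Aa(X\ot Y,I)\ \cong\ \Hom_\Aa(Y\ot X,I)\ \cong\ \Hom_\Aa(Y,X^*),$$
where the outer two are $\lambda_{X,Y}$ and $\lambda_{Y,X}^{-1}$ and the middle one is precomposition with the invertible braiding $c_{Y,X}\colon Y\ot X\to X\ot Y$. Naturality in both variables follows from naturality of $\lambda$ and of $c$, so this is precisely a contravariant self-adjunction of $F$ in the sense recalled in the preliminaries; tracing $1_{X^*}$ through the bijections identifies the common unit $\nu_X=\zeta_X\colon X\to X^{**}$ as the canonical morphism into the bidual, exactly as in the $\Vect$ case.

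Third, the braided monoidal structure. I would define $\phi_{X,Y}\colon X^*\ot Y^*\to (Y\ot X)^*=(X\ot^{op} Y)^*$ to be the unique morphism whose image under $\lambda_{X^*\ot Y^*,\,Y\ot X}$ is the iterated evaluation $\ev_X\circ(X^*\ot\ev_Y\ot X)\colon X^*\ot Y^*\ot Y\ot X\to I$, and $\phi_0\colon I\to I^*$ the morphism corresponding to $1_I$ under $\lambda_{I,I}$. Naturality of $\phi$ in each variable, the associativity coherence and the unit coherence are each reduced, via $\lambda$, to an equality of morphisms into $I$, since two morphisms into a dual coincide as soon as their composites with the relevant $\ev$ agree; after this reduction both sides of the associativity square become the single triple contraction $X^*\ot Y^*\ot Z^*\ot Z\ot Y\ot X\to I$ obtained by cancelling $Z^*\ot Z$, then $Y^*\ot Y$, then $X^*\ot X$, and the unit axioms collapse in the same way. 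Finally, for the braided-functor condition $F(c^{op}_{X,Y})\circ\phi_{X,Y}=\phi_{Y,X}\circ c_{X^*,Y^*}$ (with $c^{op}$ the braiding of $\Aa^{op}$) I would again pair with $\ev$ and rewrite, the equality now following from the hexagon identities together with the naturality of $c$ with respect to $\ev_X$ and $\ev_Y$.

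The main obstacle is exactly this last reduction in the monoidal and braided coherences: because a pre-rigid category has no coevaluation, none of the usual snake/zig-zag simplifications are available, so every identity of morphisms into a dual must be verified after composition with $\ev$, and the braided case in particular requires careful bookkeeping of the braiding crossings so that both sides land on the same pairing $X^*\ot Y^*\ot\cdots\to I$. Once phrased in this way the computations are routine; organizing the crossings for the hexagon step is the only place where real care is needed.
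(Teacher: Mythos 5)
Your proposal is correct and follows essentially the same route as the paper: both construct $\phi_{X,Y}$ and $\phi_0$ from the universal property of $\ev$ applied to the iterated evaluation, and both obtain the self-adjunction from the braiding-twisted pairing (your chain of hom-set bijections is just the representable form of the paper's unit $e\colon X\to X^{**}$ with $\ev_{X^*}\circ(e\ot X^*)=\ev_X\circ\gamma_{X,X^*}$). You spell out the coherence verifications in more detail than the paper, which only sketches them, but the underlying argument is the same.
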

\begin{proof}
Let $X$ and $Y$ be two objects in $\Aa$. Then we can construct a morphism $\ev_Y\circ (Y^*\ot\ev_X\ot Y):Y^*\ot X^*\ot X\ot Y\to I$. Hence we find, by the universal property, a unique morphism $Y^*\ot X^*\to (X\ot Y)^*$. This allows to endow the functor $(-)^*$ with a monoidal structure. 

To see that the functor is self-adjoint, consider the morphism $\ev_X\circ \gamma_{X,X^*}: X\ot X^*\to I$, where $\gamma$ denotes the braiding of $\Aa$. Then, again by the universal property, there exists a unique morphism $e:X\to X^{**}$ such that $\ev_{X^*}\circ (e\ot X^*)=\ev_X\circ \gamma_{X,X^*}$. 
This morphism can be proven to be both the unit and counit of the adjunction.
\end{proof}

The following is now an easy observation.
\begin{corollary}\colabel{LRFam}
With notation as above, denote $\Cc=\Fam(\Aa^{op})(=\Maf(\Aa)^{op})$ and $\Dd=\Fam(\Aa)$. Then the functor $(-)^*:\Aa^{op}\to\Aa$ induces an adjoint pair $(R,L)$ 
$$R:\Cc\to \Dd,\ R(I,X_i)=(I,X_i^*), \qquad L:\Dd\to \Cc,\ L(J,Y_j)=(J,Y_j^*).$$
where $R$ is a braided monoidal functor.
\end{corollary}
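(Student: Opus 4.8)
The plan is to obtain $R$ and $L$ as the images, under the $\Fam$ construction, of the self-adjoint braided monoidal functor $(-)^*$ of the preceding proposition, and then to observe that $\Fam$ transports every piece of structure componentwise while leaving the index sets essentially untouched. Write $G=(-)^*:\Aa^{op}\to\Aa$ for the functor of the proposition. First I would set $R=\Fam(G):\Fam(\Aa^{op})\to\Fam(\Aa)$, so that $R(I,X_i)=(I,X_i^*)$ and $R$ sends a morphism $(f,\phi_i)$ of $\Cc=\Fam(\Aa^{op})$ (whose components $\phi_i$ are morphisms in $\Aa^{op}$) to $(f,G\phi_i)$, a well-defined morphism in $\Dd=\Fam(\Aa)$. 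Symmetrically, applying $\Fam$ to the opposite functor $G^{op}:\Aa\to\Aa^{op}$ produces $L:\Dd\to\Cc$, $L(J,Y_j)=(J,Y_j^*)$. Functoriality of both $R$ and $L$ is immediate from functoriality of $G$.

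Next I would lift the adjunction. The proposition supplies, for each object $X$, the canonical morphism $e=e_X:X\to X^{**}$ that serves at once as unit and counit of the self-adjunction of $G$. I would define the unit $\alpha:\id_\Dd\to RL$ and counit $\beta:LR\to\id_\Cc$ componentwise, with the identity on index sets: $\alpha_{(J,Y_j)}=(\id_J,e_{Y_j})$, and $\beta_{(I,X_i)}$ given on components by $e_{X_i}$ read as a morphism in $\Aa^{op}$. Naturality of $\alpha$ and $\beta$ then reduces, component by component, to naturality of $e$, and the two triangle identities reduce to the triangle identities of the self-adjunction of $G$; both hold by the proposition. This establishes $(L,R)$ as an adjoint pair.

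Finally I would transport the braided monoidal structure. The product in $\Fam(\Aa)$ is $\ul X\ot\ul Y=(I\times J,X_i\ot Y_j)$, so that $R\ul X\ot R\ul Y$ and $R(\ul X\ot\ul Y)$ share the index set $I\times J$ and have homogeneous parts obtained by dualizing; the monoidal structure $\phi$ of $G$ from the proposition therefore assembles into a componentwise morphism $\ol\phi_{\ul X,\ul Y}=(\id_{I\times J},\phi_{\cdots})$ (the order of the dual factors being dictated by the opposite tensor of $\Aa^{op}$), and likewise $\phi_0$ lifts over the singleton index set of the unit $\ul k=(\{*\},k)$. The coherence axioms hold componentwise, and since the braiding of $\Fam(\Aa)$ is $(\tau_{I,J},c_{X_i,Y_j})$ with $\tau$ the twist in $\Set$, the braided compatibility for $R$ reduces to that of $G$ together with the (trivially symmetric) twist on index sets.

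The only genuine subtlety, and the point I would watch most carefully, is the opposite-category bookkeeping: because $\Cc=\Fam(\Aa^{op})$, the components of $\beta$ and of morphisms in $\Cc$ live in $\Aa^{op}$, so one must check that all variances and factor orders line up. This is precisely where self-adjointness of $G$ does the work: the single morphism $e$ plays both roles, so no extra data is needed and the triangle identities in $\Cc$ and in $\Dd$ are mirror images of the one pair already verified for $G$. Everything beyond this is the routine verification that $\Fam$ behaves as a $2$-functor compatible with the monoidal and braided structures described in this section.
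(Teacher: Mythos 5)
Your proposal is correct and is exactly the argument the paper has in mind: the paper offers no written proof beyond calling the corollary ``an easy observation'' following from the preceding proposition, and the intended content is precisely what you spell out --- apply the $\Fam$ construction componentwise to the self-adjoint braided monoidal functor $(-)^*:\Aa^{op}\to\Aa$, transporting the unit/counit $e_X:X\to X^{**}$ and the monoidal structure maps index-by-index. Your attention to the variance bookkeeping in $\Cc=\Fam(\Aa^{op})$ is the right place to be careful, and your treatment of it is sound.
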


The question whether $(-)^*$ is a liftable functor, was answered in \seref{Mich} in case $\Aa=\Vect$, which -\textcolor{blue}{combined with the fulfillment of the other necessary conditions of \thref{uoadjunction}}- leads to the construction of the finite dual Hopf algebra. A priori, there is no reason why the functor $R$ should be liftable as well. As mentioned before, it is our aim to study a different type of duality than the finite dual. Therefore, we make the following restriction on the base category.

Denote by $\Aa^f$ the full subcategory of $\Aa$ consisting of all rigid objects in $\Aa$. Then $\Aa^f$ is again a braided monoidal category, and moreover rigid by construction. In the sequel, we will call the objects in the associated categories $\Fam(\Aa^f)$ and $\Maf(\Aa^f)$ {\em locally finite} objects\index{object!locally finite}. Furthermore, the functor $(-)^*$ restricts to
an equivalence of categories
\begin{equation}\eqlabel{dagger}
{}^\dagger:(\Aa^f)^{op}\to \Aa^f,\end{equation}
sending any object $A\in\Aa$ to its dual. Now consider $\Cc^f=\Fam((\Aa^f)^{op})(=\Maf(\Aa^f)^{op})$ and $\Dd^f=\Fam(\Aa^f)$. Then the functor \equref{dagger} induces functors 
\begin{equation}\eqlabel{LfRfFam}
R^f:\Cc^f\to \Dd^f,\ R^f(I,X_i)=(I,X_i^\dagger), \qquad L^f:\Dd^f\to \Cc^f,\ L(J,Y_j)=(J,Y_j^\dagger).
\end{equation}
Clearly, $(L^f,R^f)$ is again an equivalence of categories, hence a liftable pair. As a conclusion, we obtain the following result (apply \thref{uoadjunction}):

\begin{theorem}\thlabel{dualityfTHA}
There is a pair of adjoint functors $(\uoL^f,\uoR^f)$ between the categories of \textcolor{blue}{Semi-Hopf monoid-algebras and Semi Hopf monoid-coalgebras,} whose homogeneous parts are rigid objects in $\Aa$. 
More precisely, $(\uoL^f,\uoR^f)$ is an equivalence of categories.
\end{theorem}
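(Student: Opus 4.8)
The plan is to deduce the existence of the adjoint pair directly from \thref{uoadjunction}, and then to upgrade the adjunction to an equivalence by exploiting that here the base pair $(L^f,R^f)$ is a genuine equivalence of categories rather than merely a liftable pair. First I would record the identifications of the categories at stake. With $\Cc^f=\Fam((\Aa^f)^{op})=\Maf(\Aa^f)^{op}$ and $\Dd^f=\Fam(\Aa^f)$, the tables of \seref{Fam} identify $\Hpfalg(\Dd^f)$ with the Hopf group-algebras whose homogeneous parts are rigid, and $\Hpfalg(\Cc^f)=\Hpfalg(\Maf(\Aa^f)^{op})$ with the locally finite Hopf group-coalgebras, using that a Hopf algebra in a category and in its opposite carry the same data with the roles of multiplication and comultiplication interchanged. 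Since $(L^f,R^f)$ is an equivalence it is in particular liftable, so \thref{uoadjunction} furnishes the adjoint pair $(\uoL^f,\uoR^f)$; this settles the first assertion.

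For the equivalence, the point I would stress is that a monoidal functor which is an equivalence of the underlying categories is automatically \emph{strong}. Concretely, the structure morphisms $Y^\dagger\ot X^\dagger\to (X\ot Y)^\dagger$ are isomorphisms because the objects of $\Aa^f$ are rigid, so $R^f$ and its quasi-inverse $L^f$ are strong braided monoidal equivalences. Being simultaneously monoidal and op-monoidal with invertible structure maps, they lift coherently to the intermediate categories, and the lifts are mutually quasi-inverse equivalences: $\oR^f,\oL^f$ on $\Alg$ and $\uL^f,\uR^f$ on $\CoAlg$. By uniqueness of adjoints, the abstractly defined right adjoint $\uR^f$ of $\uL^f$ agrees with the coalgebra lift of $R^f$, and the left adjoint $\oL^f$ of $\oR^f$ with the algebra lift of $L^f$; in particular all four functors commute strictly with the forgetful functors to the base categories.

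It follows that $\uoR^f$ and $\uoL^f$ act on underlying objects as $R^f$ and $L^f$: writing $V$ for the iterated forgetful functor $\Hpfalg(\cdot)\to\cdot$, we get $V\,\uoR^f=R^f\,V$ and $V\,\uoL^f=L^f\,V$. I would then check that the unit and counit of $(\uoL^f,\uoR^f)$, which come from the chain of natural bijections in the proof of \thref{uoadjunction}, lie under $V$ over the unit $\alpha^f$ and counit $\beta^f$ of the base equivalence. As $\alpha^f,\beta^f$ are isomorphisms and $V$ is conservative (a morphism of Hopf algebras is invertible as soon as its underlying morphism is), the unit and counit of $(\uoL^f,\uoR^f)$ are isomorphisms, whence the adjoint pair is an equivalence. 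Equivalently, one may observe that $\uoR^f$ and $\uoL^f$ are nothing but the functors induced on Hopf algebra categories by the strong monoidal equivalence $R^f$ and its quasi-inverse, and these are manifestly quasi-inverse to each other.

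The main obstacle I anticipate is exactly this last bookkeeping. The functors $\uoL^f$ and $\uoR^f$ are built in \leref{functorsuo} out of the abstract adjoints $\oL^f$ and $\uR^f$, not as direct lifts of $L^f$ and $R^f$, so the crux is to justify --- through uniqueness of adjoints together with the explicit formulas of \leref{functorsuo} and the compatibility squares of \thref{monoidal&adjoint} --- that in the equivalence case these abstract adjoints really are the expected strict lifts, and that the unit and counit produced in \thref{uoadjunction} descend under $V$ to $\alpha^f$ and $\beta^f$. Once this identification is secured, conservativity of the forgetful functor closes the argument without further computation.
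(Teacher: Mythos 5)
Your argument follows the paper's route exactly: the paper likewise observes that $(L^f,R^f)$ is an equivalence of categories, hence a liftable pair, and simply invokes \thref{uoadjunction}; your additional bookkeeping (identifying the abstract adjoints with the direct lifts via uniqueness of adjoints, and checking that the unit and counit are isomorphisms using conservativity of the forgetful functor) only makes explicit what the paper leaves implicit in the phrase ``more precisely, an equivalence''. One caveat: the general principle you invoke --- that a lax monoidal functor whose underlying functor is an equivalence is automatically strong --- is false in general, but this is harmless here because you also give the correct concrete reason, namely that rigidity of the objects of $\Aa^f$ makes the canonical maps $Y^\dagger\ot X^\dagger\to (X\ot Y)^\dagger$ invertible.
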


As a basic example, one can consider $\Aa=\Vect$ and $\Aa^f=\Vect^f$, the category of finite-dimensional vector spaces.
\textcolor{blue}{One can verify that the two conditions of \thref{uoadjunction} for the lifted adjunction given by \thref{dualityfTHA} to hold at the Hopf algebra level are satisfied, providing us with the following}
\begin{corollary}\colabel{newcoro}
\textcolor{blue}{The pair of adjoint functors $(\uoL^f,\uoR^f)$ from \thref{dualityfTHA} defines an equivalence between the categories of Hopf group-algebras and Hopf group-coalgebras, whose homogeneous parts are finite-dimensional $k$-vector spaces. }
\end{corollary}
Remark that, to an object $(I,X_i)$ in $\Fam(\Aa^f)$, one can associate the vector space $\bigoplus_{i\in I}X_i$, which is no longer finite-dimensional if $I$ is not a finite set. Hence, although the base category consists only of finite-dimensional spaces, our result is applicable to infinite-dimensional spaces as well. A Hopf group-coalgebra $(G,H_g)$ whose homogeneous parts $H_g$ are finite-dimensional for all $g\in G$ was called a finite-dimensional Hopf group-coalgebra in \cite{Tur}. As already mentioned before, we will call such an object a {\em locally finite} Hopf group-coalgebra. A similar definition will be used for Hopf group-algebras. Hence we find that the categories of locally finite Hopf group-coalgebras and locally finite Hopf group-algebras are equivalent (even isomorphic). In particular, if $\ul H=(G,H_g)$ is a locally finite Hopf group-coalgebra with comultiplications $\Delta^{\ul H}_{g,h}:H_{gh}\to H_g\ot H_h$, counit $\epsilon^{\ul H}:H_e\to k$, multiplications $\mu^{\ul H}_g:H_g\ot H_g\to H_g$  and units $\eta^{\ul H}_g:k\to H_g$, then $\ul H^\dagger=(G,H_g^*)$ is a locally finite Hopf group-algebra with multiplications $\mu^{\ul H^\dagger}_{g,h}=(\Delta^{\ul H}_{g,h})^*:H_g^*\ot H_h^*\to H_{gh}^*$, unit $\eta^{\ul H^\dagger}=(\epsilon^{\ul H})^*:k\to H_e^*$, comultiplications $\Delta^{\ul H^\dagger}_{g}=(\mu^{\ul H}_g)^*:H^*_g\to H_g^*\ot H_g^*$ and counits $\epsilon^{\ul H^\dagger}_g=(\eta^{\ul H}_g)^*:H_g^*\to k$. Conversely, if $\ul H=(G,H_g)$ is a locally finite Hopf group-algebra, then $\ul H^\dagger=(G,H_g^*)$ is a locally finite Hopf group-coalgebra and we have that $\ul H^{\dagger\dagger}\cong \ul H$. 

The above duality is in fact a special case of the duality of (regular) multiplier Hopf algebras. 
It is well-known that a finite-dimensional Hopf algebra is a Frobenius algebra. 
For a locally finite Hopf group-algebra, we have the following generalization.
\begin{proposition}
Let $(G,H_g)$ be a locally finite Hopf group-algebra. Then the associated Hopf algebra $H=\bigoplus_{g\in G}H_g$ is a co-Frobenius Hopf algebra. Let us denote $H^\dagger=\bigoplus_{g\in G}H_g^*$. Then we have moreover
$$H^\dagger=\Rat(_{H^*}H^*)=\widehat H,$$ 
where $\widehat H=\{t(-h)~|~h\in H\}\subset H^*$ denotes the reduced dual in the sense of Van Daele (see \cite{VD:duality}).
\end{proposition}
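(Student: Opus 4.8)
The plan is to work entirely with the underlying ordinary Hopf algebra $H=\bigoplus_{g\in G}H_g$, exploiting that, as a coalgebra, $H$ is the direct sum of the finite-dimensional subcoalgebras $H_g$ (each $H_g$ is a subcoalgebra, so $\Delta(H_g)\subseteq H_g\ot H_g$). First I would identify the convolution algebra $H^*=\Hom(H,k)$. For $f,f'\in H^*$ and $x\in H_g$ one has $(f*f')(x)=(f|_{H_g}*f'|_{H_g})(x)$, so convolution is computed degreewise and $(H^*,*)\cong\prod_{g\in G}H_g^*$ as algebras, each $H_g^*$ being the finite-dimensional dual algebra of $H_g$ with unit $\epsilon_g$. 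Under this identification $H^\dagger=\bigoplus_{g}H_g^*$ is precisely the two-sided ideal of tuples of finite support, and the elements $\phi_g$ given by $\epsilon_g$ in degree $g$ and $0$ elsewhere are central idempotents projecting onto the $g$-component.

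Next I would prove $\Rat({_{H^*}}H^*)=H^\dagger$. For the inclusion $H^\dagger\subseteq\Rat({_{H^*}}H^*)$ it suffices, by linearity, to treat $f\in H_g^*$ (extended by $0$). Choosing a basis $\{e_i\}$ of the finite-dimensional space $H_g$ with dual basis $\{e_i^*\}$, degreewise convolution gives $\phi*f=\sum_i\phi(e_i)\,(e_i^**f)$ for every $\phi\in H^*$, which is exactly the defining rational relation with $c_i=e_i\in H$ and $f_i=e_i^**f\in H^*$. For the reverse inclusion, take $f\in\Rat({_{H^*}}H^*)$ with $\phi*f=\sum_{i=1}^n\phi(c_i)f_i$ for all $\phi$, and apply this to the central idempotent $\phi_g$: since $\phi_g*f=f|_{H_g}$ and $\phi_g(c_i)=\epsilon_g(\pi_g c_i)$, where $\pi_g$ is the projection onto $H_g$, we obtain $f|_{H_g}=\sum_i\epsilon(\pi_g c_i)f_i$. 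As the finitely many $c_i$ have finite total support $F\subseteq G$, this vanishes for every $g\notin F$, so $f$ has finite support and lies in $H^\dagger$. Because $H_e^*\neq 0$, the rational part is nonzero, and by the criterion recalled in the preliminaries this already shows that $H$ is left, hence two-sided, co-Frobenius.

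For the last equality I would first locate the integrals. Writing $\phi(1)=\phi|_{H_e}(1_{H_e})$ and reading $\phi*t=\phi(1)t$ degreewise, one sees that for $g\neq e$ the scalar $\phi(1)$ varies over all of $k$ independently of $\phi|_{H_g}$, which forces $t|_{H_g}=0$; in degree $e$ the condition says exactly that $t|_{H_e}$ is a left integral on the finite-dimensional Hopf algebra $H_e$. Hence $\int_l$ is one-dimensional, spanned by a single $t$ supported on $H_e$. Now $\widehat H=\{t(-h)\mid h\in H\}$: for $h\in H_g$ the functional $x\mapsto t(xh)$ vanishes unless $xh\in H_e$, i.e. on every $H_{g'}$ with $g'\neq g^{-1}$, so $\widehat H\subseteq\bigoplus_gH_g^*=H^\dagger$. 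Conversely, the isomorphism $\int_l\ot H\to\Rat({_{H^*}}H^*)$, $t\ot h\mapsto t(-S(h))$, recalled in the preliminaries, shows that every element of $\Rat({_{H^*}}H^*)=H^\dagger$ has the form $t(-S(h))=t(-h')$ with $h'=S(h)\in H$, hence lies in $\widehat H$. Combining the two inclusions gives $H^\dagger=\Rat({_{H^*}}H^*)=\widehat H$.

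I expect the only genuinely delicate point to be the inclusion $\Rat({_{H^*}}H^*)\subseteq H^\dagger$: one must combine the central idempotents coming from the grouplike grading with the finiteness of the support of the comodule data $c_i$ to deduce that $f$ itself has finite support. Everything else reduces to bookkeeping on the degreewise structure, which is available precisely because the homogeneous parts $H_g$ are finite-dimensional subcoalgebras, so that $H^*$ splits as the product of the finite-dimensional algebras $H_g^*$.
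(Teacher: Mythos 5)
Your proof is correct, and it overlaps with the paper's on two key points: the dual-basis computation showing $H_g^*\subseteq \Rat({_{H^*}}H^*)$ (hence co-Frobenius), and the observation that integrals are concentrated in degree $e$ so that $t(-h)$ lands in $\bigoplus_g H_g^*$. Where you genuinely diverge is in the logical architecture. The paper closes the circle as $H^\dagger\subseteq\Rat({_{H^*}}H^*)=\widehat H\subseteq H^\dagger$, obtaining $\Rat({_{H^*}}H^*)=\widehat H$ from the isomorphism $\int_l\ot H\to\Rat({_{H^*}}H^*)$ \emph{together with the bijectivity of the antipode} of a co-Frobenius Hopf algebra. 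You instead prove $\Rat({_{H^*}}H^*)\subseteq H^\dagger$ directly, by identifying $H^*\cong\prod_g H_g^*$ and hitting the rationality relation with the central idempotents $\phi_g$ to force finite support of $f$ from the finite support of the comodule data $c_i$; you then only need the surjectivity of $t\ot h\mapsto t(-S(h))$ onto the rational part to get $\Rat({_{H^*}}H^*)\subseteq\widehat H$, and your chain $\Rat\subseteq\widehat H\subseteq H^\dagger=\Rat$ closes without ever invoking bijectivity of $S$. This buys a more self-contained argument (the degreewise structure does all the work) at the cost of an extra computation; it also makes rigorous a point the paper passes over lightly, namely that \emph{every} left integral on $H$ is supported on $H_e$, which is what justifies $\widehat H\subseteq H^\dagger$ for the specific $t$ appearing in Van Daele's reduced dual. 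Both arguments are sound; yours is slightly longer but leans less on the general theory of co-Frobenius Hopf algebras.
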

\begin{proof}

Consider any $g\in G$ and take $f\in H^*_g$. Then for any $f'\in H^*$, it is clear that $f'*f\in H^*_g$ (i.e. $f'*f$ is zero if it is evaluated outside $H_g$), since $H_g$ is a subcoalgebra of $H$. Therefore we find that $f'*f=f'(e_{i(1)}) f(e_{i(2)})f_i$, so $f_{[0]}\ot f_{[1]}=f(e_{i(2)})f_i\ot e_{i(1)}$, where $\{(e_i,f_i)\}$ denotes a finite dual basis for $H_g$. This shows that $H_g^*\subset \Rat(_{H^*}H^*)$ for all $g\in G$, in particular $H$ is co-Frobenius as the rational part is non-empty.

By the general theory of co-Frobenius Hopf algebras, we know that $\int_l\ot H\cong \Rat(_{H^*}H^*)$ and this isomorphism is given by $t\ot h\mapsto t(-S(h))$, where $t$ is an integral on $H$. Since the antipode of a co-Frobenius Hopf algebra is bijective, we obtain from this isomorphism that $\Rat(_{H^*}H^*)=\widehat H$.

Furthermore, remark that an integral $t$ on the Hopf subalgebra $H_e$ is also an integral on $H$. Hence, the rational elements of $H^*$ are the functionals of the form $t(-h)$, for $h\in H$ with $t\in H^*$. Since clearly $t(-h)\in \bigoplus_{g\in G}H_g^*$ this finishes the proof.
\end{proof}

Recall that by Van Daele's duality theory, the reduced dual $\widehat H$ of a co-Frobenius Hopf algebra $H$ is a multiplier Hopf algebra and $\widehat{\widehat H}\cong H$ (see \cite{VD:duality}). The observations of the proposition above mean exactly that the duality theory of multiplier Hopf algebras in case of locally finite Hopf group-(co)algebras reduces to \textcolor{blue}{\coref{newcoro}}.

\subsection{The additive free completion under coproducts}\selabel{PFAM}

In order to be able to apply Michaelis' theorem to Hopf group-coalgebras, we need to be able to compute the associated Lie (co)algebras. 
Unfortunately, even if $\Aa$ is additive, $\Fam(\Aa)$ and $\Maf(\Aa)$ are not necessarily additive, hence we cannot compute Lie algebras in these categories. Therefore, we will now consider a variation on these categories having the same objects, but where extra morphisms are added such that the categories become additive. 

Let us first remark that $\Fam(\Aa)$ is, up to equivalence, nothing else than the free completion of $\Aa$ under coproducts, and similarly $\Maf(\Aa)$ is the free completion under products. Recall that the free completion of $\Aa$ under coproducts is a category with coproducts $\ol\Aa$ together with a functor $F:\Aa\to\ol\Aa$ and satisfying the universal property that for any category with coproducts $\Bb$ and functor $G:\Aa\to \Bb$ there exists a unique functor $H:\ol\Aa\to\Bb$ that preserves coproducts and such that $G=H\circ F$. Recall from \cite{Kelly} that the notion of a free completion makes sense in an arbitrary $\Vv$-enriched setting. Explicitly, if $\Aa$ is a $\Vv$-enriched category, then the $\Vv$-enriched free completion under coproducts is the full subcategory of the functor category $[\Aa^{op},\Vv]$ with as objects coproducts of representable functors $YA=\Hom_\Aa(-,A)$, where $Y:\Aa\to [\Aa^{op},\Vv]$ denotes the Yoneda embedding functor. Taking $\Vv=\Set$, we recover exactly the category $\Fam(\Aa)$. Indeed, objects $\coprod_{i\in I} YA_i$ are represented by families of objects in $\Aa$, $(I,A_i)$. Consider a natural transformation $\phi:\coprod_IYA_i\to \coprod_JYB_j$, then this is equivalent to a family of natural transformations $\phi_i:YA_i\to \coprod_JYB_j$. Since coproducts in $\Set$ are disjoint unions, we find, after applying the Yoneda lemma, that $\phi_i\in \bigsqcup_J\Hom_\Aa(A_i,B_j)$, hence there is a particular $j\in J$ such that $\phi_i\in\Hom_\Aa(A_i,B_j)$. Therefore, we obtain the same morphisms for $\Fam(\Aa)$ as described at the start of \seref{Fam}.

Suppose now that $\Aa$ is an additive category and take $\Vv=\Ab$, then we recover an additive version of the $\Fam$-construction, that we will denote as $\PFam$. Objects are the same as here above, i.e. families $(I,A_i)$. A morphism is now a natural transformation $\phi:\coprod_IYA_i\to \coprod_JYB_j$. Since coproducts in $\Ab$ are given by direct sums, we then find that $\phi$ corresponds to a family of morphisms (indexed by $I$) of the form $\phi_i\in\bigoplus_{j\in J}\Hom_\Aa(A_i,B_j)$. Consequently, we find for each $i$ a finite number of corresponding indices $j_{i}\in J$ and morphisms $\phi_{j_i}\in \Hom_\Aa(A_i,B_{j_i})$. 

More precisely, we can describe the additive category $\PFam(\Aa)$ as follows: 
\begin{itemize}
\item The objects in $\PFam(\Aa)$ are the objects of $\Fam(\Aa)$.
\item A morphism $\ul \phi:(I,A_i)\to (J,B_j)$ in $\PFam(\Aa)$ is a pair $\ul \phi=(f,\phi_i)$, where 
$f:I\to P^f(J)$ is a map and where we denote by $P^f(J)$ the finite powerset of $J$, i.e.
$$P^f(J)=\{J'\subset J~|~\# J'<\infty\}$$ 
and $\phi_i:A_i\to \coprod_{j\in f(i)}B_j$. Moreover, we identify
$(f,\phi_i)\equiv(g,\psi_i)$ in $\PFam(\Aa)$ iff the following diagram commutes
\[
\xymatrix{
& \coprod_{j\in f(i)}B_j \ar[dr] \\
A_i \ar[ur]^-{\phi_i} \ar[dr]_-{\psi_i} && \coprod_{j\in f(i)\cup g(i)}B_j \\
& \coprod_{j\in g(i)}B_j \ar[ur]
}
\]
Remark that this last condition means exactly that the corresponding natural transformations $\phi,\psi:\coprod_IYA_i\to\coprod_JYB_j$ coincide.
\item For morphisms $\ul \phi=(f,\phi_i):(I,A_i)\to (J,B_j)$, $\ul\psi=(g,\psi_j):(J,B_j)\to (L,C_\ell)$ the composition 
$\ul\psi\circ \ul\phi=(h,\xi_i): (I,A_i)\to (L,C_\ell)$ is given by 
$$h:I\to P^f(L), i\mapsto \bigcup_{j\in f(i)}g(j),$$
and $\xi_i=\psi_{f(i)}\circ \phi_i$, where $\psi_{f(i)}$ is defined by the universal property of the coproduct as in the following diagram (where the vertical arrows are the canonical injections):
\[
\xymatrix{
&& B_j \ar[d] \ar[rr]^-{\psi_j} && \coprod_{\ell\in g(j)}C_\ell \ar[d] \\
A_i\ar[rr]^-{\phi_i} && \coprod_{j\in f(i)}B_j \ar@{.>}[rr]^-{\psi_{f(i)}=\coprod_{j\in f(i)}\psi_j} && \coprod_{\ell\in \cup_{j\in f(i)} g(j)}C_\ell
}
\]
\end{itemize}
As this will be used in the remaining part of the paper, let us describe explicitly the additive structure of the category $\PFam(\Aa)$:
\begin{itemize}
\item The zero object $\ul 0$ is the unique object with $\emptyset$ as underlying set.
\item Let $(f,\phi_i), (g,\psi_i):(I,A_i)\to (J,B_j)$ be two morphisms in $\PFam(\Aa)$. Then $(f,\phi_i)+(g,\psi_i)=(f\cup g,\phi_i+\psi_i)$
where $(f\cup g)(i)=f(i)\cup g(i)$ and 
\[
\xymatrix{
A_i 
\ar[rr]^-{\phi_i+\psi_i} && \coprod_{j\in f(i)\cup g(i)}B_j \\
}
\]
\item For a morphism $(f,\phi_i)$ in $\PFam(\Aa)$, the inverse morphism $-(f,\phi_i)$ is given by $(f,-\phi_i)$.
\end{itemize}

If $\Aa$ is braided (symmetric) then $\PFam(\Aa)$ is again braided (resp. symmetric) with the same braiding as the one we defined on $\Fam(\Aa)$. 
Since $\PFam(\Aa)$ has the same objects as $\Fam(\Aa)$ and as it has more morphisms, there is an obvious embedding functor $\sf P:\Fam(\Aa)\to \PFam(\Aa)$. 

In an obvious dual way, one defines $\PMaf(\Aa)$. From the universal properties of free completion, one then arrives at the following result.

\begin{proposition}\prlabel{FamvsPFam}
With notation as above, if $\Aa$ is an additive symmetric monoidal category, then $\PFam(\Aa)$ and $\PMaf(\Aa)$ are additive symmetric monoidal categories as well. Furthermore, we have a diagram of functors 
\[
\xymatrix{
\Fam(\Aa) \ar[d]_{\sf P} \ar[rr]^{\sf F} && \Set && \Maf(\Aa) \ar[ll]_{\rff} \ar[d]^{\rpp} \\
\PFam(\Aa)  \ar[rr]^{\sf S} && \Aa && \PMaf(\Aa) \ar[ll]_{\rss} 
}
\]
where $\sf F$ and $\rf$ are strong monoidal forgetful functors, $\sf P$ and $\rp$ are strong monoidal functors, and $\sf S$ and $\rs$ are additive monoidal functors, sending $(I,A_i)$ to $\coprod_{i\in I}A_i$.
\end{proposition}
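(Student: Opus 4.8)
The plan is to read everything off from the identification, established just above, of $\PFam(\Aa)$ with the $\Ab$-enriched free completion of $\Aa$ under coproducts, realised as the full subcategory of the additive functor category $[\Aa^{op},\Ab]$ spanned by coproducts of representable functors; $\PMaf(\Aa)$ is the dual construction (free completion under products). I would argue the $\Fam$/$\PFam$ side in detail, the $\Maf$/$\PMaf$ side following by the evident dualization, which is also why $\rf$, $\rp$, $\rs$ are the reflections of ${\sf F}$, ${\sf P}$, ${\sf S}$. Additivity of $\PFam(\Aa)$ is then almost immediate: as a full subcategory of $[\Aa^{op},\Ab]$, its hom-sets are abelian groups under the operations $(f,\phi_i)+(g,\psi_i)=(f\cup g,\phi_i+\psi_i)$, $-(f,\phi_i)=(f,-\phi_i)$ with zero object $\ul 0$ already recorded, and composition is bilinear. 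It remains to produce finite biproducts: I take the biproduct of $(I,A_i)$ and $(J,B_j)$ to be $(I\sqcup J,C_k)$ with $C$ restricting to $A$ on $I$ and to $B$ on $J$, the structure maps induced by the inclusions of index sets, and the biproduct identities are forced by the corresponding ones in $[\Aa^{op},\Ab]$.

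Next I would equip $\PFam(\Aa)$ with the symmetric monoidal structure that on objects agrees with $\Fam(\Aa)$: $(I,A_i)\ul\ot(J,B_j)=(I\times J,A_i\ot B_j)$, unit $\ul k=(\{*\},k)$, braiding $c_{\ul X,\ul Y}=(\tau_{I,J},c_{X_i,Y_j})$. The substantive point is to extend $\ul\ot$ to a bifunctor on the larger morphism class. Given $(f,\phi_i)$ and $(g,\psi_j)$ with $\phi_i\colon A_i\to\coprod_{i'\in f(i)}A'_{i'}$ and $\psi_j\colon B_j\to\coprod_{j'\in g(j)}B'_{j'}$, I form $\phi_i\ot\psi_j$ and use that $-\ot X$ and $X\ot-$ preserve finite coproducts (being additive, they preserve finite biproducts) to identify the codomain with $\coprod_{(i',j')\in f(i)\times g(j)}A'_{i'}\ot B'_{j'}$, so that the tensor morphism has index map $(i,j)\mapsto f(i)\times g(j)$. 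Granting this, functoriality, additivity of $\ul\ot$ in each variable, and the pentagon, triangle and hexagon axioms reduce to coherence in $\Aa$ together with the symmetric monoidal structure of $(\Set,\times,\{*\})$; the only clause genuinely using the distributivity just invoked is naturality of $c$ against the new morphisms.

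Then the three functors. The forgetful ${\sf F}\colon\Fam(\Aa)\to\Set$, $(I,A_i)\mapsto I$, $(f,\phi_i)\mapsto f$, is strong monoidal with identity constraints, since ${\sf F}(\ul X\ul\ot\ul Y)=I\times J$ and ${\sf F}(\ul k)=\{*\}$. The embedding ${\sf P}\colon\Fam(\Aa)\to\PFam(\Aa)$ is the identity on objects and sends $(f,\phi_i)$ to $(i\mapsto\{f(i)\},\phi_i)$; it is strict, hence strong, monoidal because the monoidal data on $\PFam(\Aa)$ were chosen to extend those of $\Fam(\Aa)$. The functor ${\sf S}\colon\PFam(\Aa)\to\Aa$, $(I,A_i)\mapsto\coprod_{i\in I}A_i$, is the one induced by the identity functor of $\Aa$ through the universal property of the free cocompletion, so here I assume $\Aa$ has the relevant coproducts; on $(f,\phi_i)$ it is assembled from the $\phi_i$ by the universal property of $\coprod$. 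It preserves biproducts (a disjoint union of index sets goes to a coproduct), hence is additive, and it carries the canonical comparison $\coprod_{(i,j)}A_i\ot B_j\to(\coprod_iA_i)\ot(\coprod_jB_j)$, which is an isomorphism --- making ${\sf S}$ strong monoidal --- precisely when $\ot$ distributes over coproducts in $\Aa$, as in the intended examples. The reflected functors $\rf$, $\rp$, $\rs$ arise by dualization.

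The hard part will be the bookkeeping in the second step: making $\ul\ot$ a well-defined additive bifunctor on $\PFam(\Aa)$, where morphisms are only specified up to the identification encoded by the commuting triangle in the definition, and where tensoring intertwines the ambient $\ot$ with the finite coproducts $\coprod_{j\in f(i)}$ appearing in the morphism data. All of this hinges on $-\ot X$ preserving finite coproducts; the same distributivity, now for arbitrary coproducts, is exactly what ${\sf S}$ and $\rs$ need in order to be monoidal, and is the only input beyond ``additive symmetric monoidal'' --- namely, the existence in $\Aa$ of coproducts over which $\ot$ distributes --- that the argument requires.
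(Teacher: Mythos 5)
Your proposal is correct and follows the same route as the paper, which in fact offers no written proof of this proposition beyond the preceding identification of $\PFam(\Aa)$ with the $\Ab$-enriched free completion of $\Aa$ under coproducts and the explicit description of its objects, morphisms, composition and additive structure; your verification of additivity via biproducts on disjoint unions of index sets, the extension of $\ul\ot$ using that $-\ot X$ preserves finite biproducts, and the construction of the three functors is exactly the intended argument. Your one substantive addition is also right: the functors $\sf S$ and $\rs$ presuppose that $\Aa$ has set-indexed coproducts (and strong monoidality of $\sf S$ would further require $\ot$ to distribute over them), a hypothesis the proposition leaves implicit but which the paper does acknowledge elsewhere in \seref{Fam}.
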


Extending our point of view from the category of families $\Fam(\Aa)$ to the additive category $\PFam(\Aa)$ forces the existence of many new types of Hopf algebras, as we have introduced new morphisms that can serve as multiplication, comultiplication, etc. We will not describe these new classes of Hopf algebras in this note. Thanks to the observations from \prref{FamvsPFam} we know that all Hopf algebras in $\Fam(\Aa)$ are still Hopf algebras in $\PFam(\Aa)$. Indeed, the braided strong monoidal functor $\sf P$ preserves Hopf algebras. Since the Hopf algebras in $\Fam$ are objects of main importance, we will focus on these in the sequel. For the same reason, we will not describe all Lie algebras and Lie coalgebras in these categories. Let us restrict to the following observation, which is sufficient for our needs. 
\begin{lemma}\lelabel{LiealgPMaf}
Let $\{(L_i,\lie_{i})\}_{i\in I}$ be a family of Lie algebras in $\Aa$, then $(I,L_i)$ is a Lie algebra in $\PMaf(\Aa)$. Dually, let $\{(C_i,\Upsilon_{i})\}_{i\in I}$ be a family of Lie coalgebras in $\Cc$, then $(I,C_i)$ is a Lie coalgebra in $\PFam(\Aa)$.
\end{lemma}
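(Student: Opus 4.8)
The plan is to write the bracket on $(I,L_i)$ down explicitly and then verify the two Lie axioms degree by degree, reducing them to the corresponding axioms for each $L_i$ in $\Aa$. Recall that, dually to the description of $\PFam(\Aa)$ given above, a morphism $(I,A_i)\to(J,B_j)$ in $\PMaf(\Aa)$ is a pair $(g,\psi_j)$ consisting of a map $g\colon J\to P^f(I)$ together with $\Aa$-morphisms $\psi_j\colon\prod_{i\in g(j)}A_i\to B_j$. Since the monoidal product is unchanged on objects, $(I,L_i)\ot(I,L_i)=(I\times I,\{L_i\ot L_{i'}\})$. First I would take as bracket the morphism $\lie\colon(I\times I,\{L_i\ot L_{i'}\})\to(I,L_i)$ given by the index assignment $i\mapsto\{(i,i)\}$ (a singleton, hence finite) and by the component morphisms $\lie_i\colon L_i\ot L_i\to L_i$. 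This is a well-defined $\PMaf(\Aa)$-morphism, and it is \emph{diagonal} in the sense that its only nonzero components sit over the diagonal of $I\times I$.

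Next I would verify antisymmetry and Jacobi. By \prref{FamvsPFam} the category $\PMaf(\Aa)$ is additive symmetric monoidal, so the sums appearing in \equref{ASsymm} and \equref{Jacsymm} make sense, and it suffices to check that the relevant composites vanish in each target degree $i\in I$. The key observation is that the symmetry of $\PMaf(\Aa)$ has as index datum the twist on index sets (as for $\Fam(\Aa)$), and the twist fixes every diagonal tuple $(i,i)$, respectively $(i,i,i)$. Consequently, composing the diagonal bracket with the symmetry, or with the cyclic maps $t_c$ and $w_c$, leaves the index assignment $i\mapsto\{(i,i)\}$ (respectively $i\mapsto\{(i,i,i)\}$) unchanged and only twists the underlying $\Aa$-morphisms by the braiding $c_{L_i,L_i}$ of $\Aa$. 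Thus, in degree $i$, the expression $\lie\circ(1+c)$ becomes $\lie_i\circ(1_{L_i\ot L_i}+c_{L_i,L_i})$, which is $0$ by \equref{ASsymm} for $L_i$, while the Jacobi expression $\lie\circ(1\ot\lie)\circ(1+t_c+w_c)$ becomes the left-hand side of \equref{Jacsymm} for $L_i$, hence also $0$. As all components vanish, both morphisms are zero in $\PMaf(\Aa)$, so $(I,L_i)$ is a Lie algebra.

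The main bookkeeping to be careful about is the index combinatorics in the composites: one must check that composing the diagonal bracket with itself (for the iterated bracket $\lie\circ(1\ot\lie)$) again yields the diagonal assignment $i\mapsto\{(i,i,i)\}$ with component $\lie_i\circ(1_{L_i}\ot\lie_i)$, and that the three summands of the Jacobi sum share this same index assignment, so that their sum is computed componentwise in $\Aa$ through the additive structure of $\PMaf(\Aa)$. This is the only genuinely nontrivial point, and it holds because brackets preserve degrees while the coordinate permutations fix diagonal tuples. Finally, the dual statement comes for free: a Lie coalgebra in $\PFam(\Aa)$ is by definition a Lie algebra in $\PFam(\Aa)^{op}=\PMaf(\Aa^{op})$, and a family of Lie coalgebras $(C_i,\Upsilon_i)$ is a family of Lie algebras in $\Aa^{op}$; applying the first part of the lemma to $\Aa^{op}$ then shows that $(I,C_i)$ is a Lie algebra in $\PMaf(\Aa^{op})$, that is, a Lie coalgebra in $\PFam(\Aa)$.
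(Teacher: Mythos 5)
Your proof is correct and follows the same route as the paper: the paper also defines the bracket as the pair $(d,\lie_i)$ with $d(i)=(i,i)$ the diagonal and then asserts the Lie axioms are ``easily checked,'' which is exactly the componentwise verification you carry out. The only difference is that you supply the index-bookkeeping details (diagonal tuples being fixed by the symmetry and by the cyclic permutations) and the formal duality argument that the paper leaves implicit.
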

\begin{proof}
We know that any set $I$ possesses a unique coalgebra structure in $\Set$, with comultiplication $d:I\to I\times I,\ d(i)=(i,i)$. We define the bracket $\ul\lie=(d,\lie_i)$ on $(I,L_i)$, then it is easily checked that this defines a Lie algebra in $\PMaf(\Aa)$. 
\end{proof}

In principle, we now have all the tools to obtain a version of Michaelis' theorem for Turaev's Hopf group-coalgebras. It suffices to verify that the pair $(L^f,R^f)$ is Lie-liftable, apply our main result \thref{Michaelis} to the additive categories $\PFam(\Aa)$ and $\PMaf(\Aa)$ described above, and implement it for Hopf group-(co)algebras, which are particular Hopf algebras in these categories. In the next Section, we will work this out in a more explicit way.

\subsection{Michaelis' theorem for Hopf group-coalgebras}

As we have two kinds of Hopf algebras, namely Hopf group-algebras and Hopf group-coalgebras, we can prove two versions of Michaelis' theorem in the setting of Turaev's Hopf algebras.

Firstly, we compute (see \thref{MichTur1} below) the primitive elements of a Hopf group-algebra, which turn out to be just the primitive elements of the degree 1 homogeneous part, which is a (classical) Hopf sub-algebra. In a similar way, the indecomposables of a Hopf group-coalgebra are just the indecomposables of the degree 1 homogeneous part, which is again a Hopf sub-algebra. In the previous section, we found that the pair $(L^f,R^f)$ between the categories of families of finite-dimensional vector spaces is liftable (and Lie-liftable). Applying these results, we would obtain that for a locally finite Hopf group-algebra $H$, we have $P(H_e^*)\cong Q(H_e)^*$. However, as $H_e$ is a usual Hopf algebra, even for a Hopf group-algebra that is not locally finite, we can also apply Michaelis' classical theorem to $H_e$ and obtain a slightly more general result, valuable for all Hopf group-algebras.
All this leads us to the following theorem.

\begin{theorem}\thlabel{MichTur1}
\begin{enumerate}[(i)]
\item
Let $(G,H_g)$ be a Hopf group-algebra  (i.e.\ a Hopf algebra in $\Fam(\Vect)$), then the primitive elements of $H=\bigoplus_{g\in G}H_g$ are exactly the primitive elements of $H_e$, hence they form a classical Lie algebra.
\item
Dually, if $(G,H_g)$ is a Hopf group-coalgebra, the indecomposables of $(G,H_g)$ are exactly the indecomposables of $H_e$, hence they form a classical Lie coalgebra.
\item
Consequently, given a Hopf group-coalgebra $(G,H_g)$, we have the following isomorphism of Lie algebras
$$P(H_e^\circ)\cong Q(H_e)^*.$$
\end{enumerate}
\end{theorem}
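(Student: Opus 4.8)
The plan is to prove (i) and (ii) by direct degree-by-degree computations that exploit the $G$-grading, and then to read off (iii) as the classical Michaelis theorem applied to the degree-$e$ component $H_e$.

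For (i), I would start from the description of a Hopf algebra in $\Fam(\Vect)$: the underlying classical Hopf algebra $H=\bigoplus_{g\in G}H_g$ is $G$-graded as an algebra, its unit $1$ lies in $H_e$, and each homogeneous component $H_g$ is a subcoalgebra, so that $\Delta(H_g)\subseteq H_g\otimes H_g$ with $\Delta|_{H_g}=\Delta_g$ and $\epsilon|_{H_g}=\epsilon_g$. Writing a primitive element $x=\sum_g x_g$ in homogeneous parts, I would compare the $(g,h)$-graded pieces of the equation $\Delta(x)=x\otimes 1+1\otimes x$. Since $\Delta_g(x_g)$ lands in the $(g,g)$-piece, while $x_g\otimes 1$ and $1\otimes x_g$ land in the $(g,e)$- and $(e,g)$-pieces, for $g\neq e$ the $(g,g)$-component forces $\Delta_g(x_g)=0$; applying the counit $\epsilon_g$ to the first tensor factor and using counitality of the coalgebra $H_g$ then gives $x_g=0$. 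Hence $x=x_e$ with $\Delta_e(x_e)=x_e\otimes 1+1\otimes x_e$, i.e.\ $x_e\in P(H_e)$, and since the commutator bracket restricts to the subalgebra $H_e$ this identifies $P(H)$ with the classical Lie algebra $P(H_e)$.

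For (ii), I would carry out the formally dual computation. Now each $H_g$ is a unital algebra with multiplication $\mu_g$, the counit is a single map $\epsilon_e\colon H_e\to k$, and the indecomposables are $Q=I/I^2$ with $I=\ker\epsilon$. Because $\epsilon$ constrains only degree $e$, the augmentation ideal has homogeneous parts $I_e=\ker\epsilon_e$ and $I_g=H_g$ for $g\neq e$; because the multiplication in $\Maf(\Vect)$ is block-diagonal, $I^2$ is computed componentwise as $I_g^2=\mu_g(I_g\otimes I_g)$. For $g\neq e$ the unitality of $H_g$ makes $\mu_g$ surjective, so $I_g^2=H_g=I_g$ and $(I/I^2)_g=0$; in degree $e$ one recovers $I_e/I_e^2=\ker\epsilon_e/(\ker\epsilon_e)^2=Q(H_e)$, with Lie co-bracket induced by the degree-$e$ comultiplication $\Delta_{e,e}$. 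Thus the indecomposables are concentrated in degree $e$ and coincide, as a Lie coalgebra, with the classical $Q(H_e)$.

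For (iii), part (ii) identifies the indecomposables of $(G,H_g)$ with $Q(H_e)$, and $H_e$ is a Hopf algebra in the usual sense, so its Sweedler dual $H_e^\circ$ and the primitive Lie algebra $P(H_e^\circ)$ are the classical ones (part (i) explains, dually, why the primitive side also only sees degree $e$). The asserted isomorphism $P(H_e^\circ)\cong Q(H_e)^*$ is then exactly the classical Michaelis theorem for the ordinary Hopf algebra $H_e$, which we recovered in \seref{Mich} as the special case $\Dd=\Vect$, $\Cc=\Vect^{op}$ of \thref{Michaelis} (equivalently \cite{Mich2}); applying it to $H_e$ finishes the proof. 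The main obstacle I anticipate lies in (ii): one must keep straight that in $\Maf(\Vect)$ the multiplication is block-diagonal---so that $I^2$ really is computed one degree at a time---while the comultiplication $\Delta_{g,h}$ mixes degrees, and one must check that $\ker\epsilon$ is all of $H_g$ for every $g\neq e$. The surjectivity $\mu_g(H_g\otimes H_g)=H_g$ coming from unitality, which is the precise dual of the counitality used in (i), is what collapses the higher-degree contributions; confirming that this collapse is compatible with the induced Lie-coalgebra co-bracket is the one point that needs genuine care.
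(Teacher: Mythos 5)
Your proposal is correct and follows essentially the same route as the paper: part (i) is the same degree-by-degree comparison of $\Delta(x)$ with $x\otimes 1+1\otimes x$, using that each $H_g$ is a subcoalgebra while $1\in H_e$ (plus counitality to get $x_g=0$ from $\Delta_g(x_g)=0$), and part (iii) is, as in the paper, just the classical Michaelis theorem applied to the ordinary Hopf algebra $H_e$. For part (ii) the paper merely writes ``follows from dual arguments''; your explicit computation of $I/I^2$ using the block-diagonal multiplication and the fact that $I_g^2=\mu_g(H_g\otimes H_g)=H_g$ for $g\neq e$ by unitality is exactly the intended dualization and is correct.
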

\begin{proof}
\ul{(i)}
Take $H=\bigoplus_{g\in G}H_g$, then $H$ is a usual Hopf algebra, graded as an algebra. We can compute the primitive elements of $H$. These are elements $x\in H$ such that $\Delta(x)=1\ot x+x\ot 1$. We can write $x=\sum_{i\in I} x_i$ with $I\subset G$ and $x_i\in H_i$. For each $x_i$ we have that $\Delta(x_i)\in H_i\ot H_i$. Hence $\Delta(x)\in \bigoplus_{i\in I} H_i\ot H_i=: X$. On the other hand, $1\in H_e$. Hence $1\ot x\in H_e\ot (\bigoplus_{i\in I} H_i)=\bigoplus_{i\in I} (H_e\ot H_i)$ and hence $1\ot x+x\ot 1\in (\bigoplus_{i\in I} (H_e\ot H_i))\oplus (\bigoplus_{i\in I} (H_i\ot H_e))=:Y$. The spaces $X$ and $Y$ only have a non-zero intersection if $e\in I$ and in this case the intersection is exactly $H_e\ot H_e$. By conclusion, the only primitive elements of $H$ are the primitive elements of the Hopf algebra $H_e$ concentrated in degree $e$, i.e. $P(H)=P(H_e)$. 
\item \ul{(ii)}. Follows from dual arguments.
\item \ul{(iii)}. Follows now directly from the classical version of Michaelis' theorem. 
\end{proof}

The second Michaelis-type theorem for Turaev's Hopf algebras requires a computation of the primitive elements of Hopf group-coalgebras. This leads to the following definition.

\begin{definition}
Let $\ul H= (G,H_g)$ be a Hopf group-coalgebra  and $g\in G$. A \textit{$g$-primitive element}\index{element!$g$-primitive} of $\ul H$ is an element $x\in H_g$ such that there exists an element $(x_h)_{h\in G}\in \prod_{h\in G}H_h$ with $x_g=x$, such that for all $h,h'\in G$, with $g=hh'$, 
$$\Delta_{h,h'}(x)=1_{h}\ot x_{h'}+x_{h}\ot 1_{h'}.$$
The set of all $g$-primitive elements of $\ul H$ will be denoted by $P_g=P_g(\ul H)$.
\end{definition}

Remark that the $e$-primitive elements of $\ul H$ are (classical) primitive elements of the (usual) Hopf algebra $H_e$.

\begin{lemma}\lelabel{gprimitive}
Let $\ul H=(G,H_g)$ be a Hopf group-coalgebra, $g\in G$ and $x$ be a $g$-primitive element. Denote by $(x_g)_{g\in G}\in\prod_{g\in G}H_g$ the element associated to $x$ (as in the definition of a $g$-primitive element). Then the following assertions hold
\begin{enumerate}[(i)]
\item $x_h$ is an $h$-primitive element for all $h\in H$, more precisely, $\Delta_{h'h''}(x_h)=1_{h'}\ot x_{h''}+x_{h'}\ot 1_{h''}$ for all $h',h''\in G$ such that $h=h'h''$;
\item $\epsilon(x_e)=0$.
\end{enumerate}
\end{lemma}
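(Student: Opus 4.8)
The plan is to treat $x=x_g$ as the only component whose comultiplications are fully known --- they are literally the defining relations of $g$-primitivity --- and to propagate these relations to the other components $x_h$ by repeated use of coassociativity together with the counit and antipode axioms. I would dispatch (ii) first, since it is needed afterwards: factorizing $g=g\cdot e$, the defining relation gives $\Delta_{g,e}(x)=1_g\ot x_e+x\ot 1_e$, and applying the counit axiom $(H_g\ot\epsilon)\circ\Delta_{g,e}=\mathrm{id}$ together with $\epsilon(1_e)=1$ yields $x=\epsilon(x_e)1_g+x$, whence $\epsilon(x_e)1_g=0$ and so $\epsilon(x_e)=0$.

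For (i) I would introduce the primitivity defect $y_{a,b}:=\Delta_{a,b}(x_{ab})-1_a\ot x_b-x_a\ot 1_b\in H_a\ot H_b$, so that the goal is exactly $y_{h',h''}=0$ for every factorization $h=h'h''$. Coassociativity for a triple $(a,b,c)$ with $abc=g$, evaluated at $x$, collapses --- after inserting the known $g$-primitive decompositions of $\Delta_{ab,c}(x)$ and $\Delta_{a,bc}(x)$ and using that each $\Delta$ preserves units --- to the single identity $y_{a,b}\ot 1_c=1_a\ot y_{b,c}$. The boundary factorizations then fall out at once: for the triple $(e,b,c)$ with $bc=g$ the right-hand side only involves the known $\Delta_{e,g}(x)$ and $\Delta_{b,c}(x)$, so cancelling the injective map $(-)\ot 1_c$ gives $y_{e,b}=0$; symmetrically $(d,a,e)$ with $da=g$ gives $y_{a,e}=0$.

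The substance is the generic factorization with $h',h''\neq e$ and $h'h''\neq g$. Applying $y_{a,b}\ot 1_c=1_a\ot y_{b,c}$ with room on the right (triple $(h',h'',c)$, $c=(h'h'')^{-1}g$) and pairing the $H_c$-slot against any functional sending $1_c\mapsto 1$ shows $y_{h',h''}\in 1_{h'}\ot H_{h''}$; using room on the left (triple $(a,h',h'')$, $a=g(h'h'')^{-1}$) shows dually $y_{h',h''}\in H_{h'}\ot 1_{h''}$. Intersecting, $y_{h',h''}=\lambda\,(1_{h'}\ot 1_{h''})$ for a single scalar $\lambda$, so only a scalar defect survives.

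The main obstacle, which I expect to be genuinely delicate, is forcing $\lambda=0$. Coassociativity and the counit alone do not suffice: the relation $y_{a,b}\ot 1_c=1_a\ot y_{b,c}$ merely links these scalars along factorization orbits, and such an orbit need not meet a boundary case (already visible for $G=\ZZ/3$, $g=e$ and the pair $(i,i^{-1})$ with $i\neq e$). To close the argument I would bring in the antipode, the only counit-like contraction available off the unit component: applying $\mu_i\circ(H_i\ot S_i)\circ\Delta_{i,i^{-1}}=\eta_i\circ\epsilon$ to $x_e$ and using (ii) gives $S_i(x_{i^{-1}})+x_i+\lambda\,1_i=0$, while the anti-comultiplicativity of $S$ transports the defect on $(a,b)$ to the defect on $(b^{-1},a^{-1})$. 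Reconciling these two antipode identities with (ii) is what I expect to pin $S_i(x_{i^{-1}})=-x_i$ and hence $\lambda=0$ on every orbit --- equivalently, it selects, among the witnesses (which the defining relations determine only up to adding multiples of $1_h$ in each degree), the unique normalization for which each $x_h$ is genuinely $h$-primitive. Once this compatibility is established, $y_{h',h''}=0$ for all factorizations and (i) follows.
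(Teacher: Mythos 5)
Your reduction of (i) to the vanishing of a single scalar defect per factorization is correct, and up to that point it is essentially the paper's own argument made precise: the paper runs coassociativity on the two triples $(h',h'',\ell)$ and $(\ell',h',h'')$ with $h'h''\ell=\ell'h'h''=g$, obtains $\Delta_{h',h''}(x_h)=x_{h'}\ot 1_{h''}+1_{h'}\ot y=1_{h'}\ot x_{h''}+z\ot 1_{h''}$, and then asserts $y=x_{h''}$ ``by a symmetric argument''. As your analysis shows, these two expressions only give $(x_{h'}-z)\ot 1_{h''}=1_{h'}\ot(x_{h''}-y)$, hence $y=x_{h''}-\lambda 1_{h''}$ for a scalar $\lambda$ that neither coassociativity nor the counit controls off the boundary cases. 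Your treatment of (ii) and of the boundary triples is fine (minor slip: for $g=e$ the unconstrained pair is $(i,i)$, not $(i,i^{-1})$, since $i\cdot i^{-1}=g$ is covered by the defining relation). So you have correctly located the one nontrivial step --- which is exactly the step the paper elides.

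The gap is that your proposed antipode argument cannot close it, because for an arbitrary witness family the scalar defect genuinely need not vanish. Take $G=\ZZ/3=\{e,a,a^2\}$ and the trivial Hopf $G$-coalgebra $H_i=k$, $\Delta_{i,j}(1)=1\ot 1$, $\epsilon=\id$, $S_i=\id$ (or tensor with $k[t]$, $t$ primitive, for a nonzero example). For $g=e$, the family $x_e=0$, $x_a=1_a$, $x_{a^2}=-1_{a^2}$ (resp.\ $x_e=t$, $x_a=t+1$, $x_{a^2}=t-1$) satisfies all the defining relations of $e$-primitivity, since these only constrain the pairs $(h,h^{-1})$; yet $\Delta_{a,a}(x_{a^2})-1_a\ot x_a-x_a\ot 1_a=-3\cdot 1_a\ot 1_a\neq 0$. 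Every identity you plan to extract from the antipode holds in this example ($S_i(x_{i^{-1}})=-x_i$, and the defect is antisymmetric under $(a,b)\mapsto(b^{-1},a^{-1})$), so no combination of them can force $\lambda=0$. Structurally, your $y_{a,b}=\lambda_{a,b}1_a\ot 1_b$ defines a normalized $2$-cocycle on $G$ vanishing on $\{(a,b):ab=g\}$, and the ambiguity in the witness ($x_h\mapsto x_h+\nu_h1_h$ with $\nu_a+\nu_{a^{-1}g}=0$) shifts it by a coboundary; the example above is a nonzero such coboundary. The statement therefore holds only after renormalizing the witness family (equivalently, condition (i) should be built into the choice of $(x_h)$, which is what the equalizer description of $P_g$ in the paper's closing remark effectively does), and your route for resolving the difficulty --- like the paper's unstated one --- does not go through as written.
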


\begin{proof}
\ul{(i)}. Take $h,h',h''\in G$ as in the statement. Then there exists an $\ell\in G$ such that $h\ell=g$. By coassociativity in the Hopf group-coalgebra, we find
\begin{eqnarray*}
(\Delta_{h',h''}\ot H_\ell)\circ \Delta_{h,\ell}(x)&=& (\Delta_{h',h''}\ot H_\ell)(1_h\ot x_\ell + x_h\ot 1_\ell)\\
&=&1_{h'}\ot 1_{h''}\ot x_\ell + \Delta_{h',h''}(x_h)\ot 1_\ell\\
=(H_{h'}\ot \Delta_{h'',\ell})\circ\Delta_{h',h''\ell}(x)&=& (H_{h'}\ot \Delta_{h'',\ell})(1_{h'}\ot x_{h''\ell}+x_{h'}\ot 1_{h''}\ell)\\
&=&1_{h'}\ot \Delta_{h'',\ell}(x_{h''\ell})+x_{h'}\ot 1_{h''}\ot 1_\ell
\end{eqnarray*}
From this equality, we can deduce that there exists an element $y\in H_{h''}$ such that
$$\Delta_{h',h''}(x_h)= x_{h'}\ot 1_{h''}+1_{h'}\ot y \quad {\rm and}\quad 
\Delta_{h'',\ell}(x_{h''\ell})=1_{h''}\ot x_\ell + y\ot 1_\ell $$
Using a symmetric argument with $\ell'\in G$ such that $\ell'h=g$, we find that $y=x_{h''}$, and the statement is proved.
\item \ul{(ii)}. By the counit condition, we know that $x=(\epsilon\ot H_g)\circ\Delta_{e,g}(x)$. Hence, 
$$x=\epsilon(x_e)1_g+\epsilon(1_e)x.$$
Since $\epsilon(1_e)=1$, we conclude that $\epsilon(x_e)=0$.
\end{proof}

\begin{theorem}
Let $(G,H_g)$ be a Hopf group-coalgebra (i.e. a Hopf algebra in $\PMaf$), then $(G,P_g)$ is a collection of Lie algebras (hence $(G,P_g)$ is a Lie algebra in $\PMaf$ by \leref{LiealgPMaf}).
\end{theorem}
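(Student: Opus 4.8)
The plan is to equip each $P_g$ with the commutator bracket coming from the algebra structure on the homogeneous component $H_g$, to check that $P_g$ is closed under this bracket, and then to invoke \leref{LiealgPMaf} to pass from the family $\{P_g\}_{g\in G}$ to a Lie algebra in $\PMaf(\Aa)$. Recall that in a Hopf group-coalgebra every $H_g$ is a unital associative algebra with multiplication $\mu_g$ and unit $1_g$, so that its commutator Lie algebra $\Ll_\Aa(H_g)=(H_g,[-,-])$, with $[x,y]=\mu_g(x\otimes y)-\mu_g(y\otimes x)$, is a Lie algebra in $\Aa$. It therefore suffices to prove that $P_g$ is a Lie subalgebra of $\Ll_\Aa(H_g)$, that is, a subspace closed under $[-,-]$.

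First I would record that $P_g$ is a linear subspace. If $x,y\in P_g$ carry associated families $(x_h)_{h\in G}$ and $(y_h)_{h\in G}$, then the linearity of each comultiplicative map $\Delta_{h,h'}$ immediately shows that $x+y$ and $\lambda x$ are again $g$-primitive, with associated families $(x_h+y_h)_{h\in G}$ and $(\lambda x_h)_{h\in G}$ respectively. The essential point is closure under the bracket. Here I would use that, $(G,H_g)$ being a (semi-Hopf) group-coalgebra, each $\Delta_{h,h'}\colon H_{hh'}\to H_h\otimes H_{h'}$ is an algebra morphism for the tensor-product algebra structure, in which $(a\otimes b)(c\otimes d)=ac\otimes bd$ (the braiding being the twist). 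Thus for any factorisation $g=hh'$ I would compute $\Delta_{h,h'}(\mu_g(x\otimes y))=\Delta_{h,h'}(x)\,\Delta_{h,h'}(y)$ by substituting the expressions $\Delta_{h,h'}(x)=1_h\otimes x_{h'}+x_h\otimes 1_{h'}$ and $\Delta_{h,h'}(y)=1_h\otimes y_{h'}+y_h\otimes 1_{h'}$ supplied by \leref{gprimitive} and expanding. Subtracting the analogous expansion for $yx$, the two mixed terms $y_h\otimes x_{h'}+x_h\otimes y_{h'}$ occur identically in both products and cancel, leaving
\[
\Delta_{h,h'}([x,y]) = 1_h\otimes [x_{h'},y_{h'}] + [x_h,y_h]\otimes 1_{h'}.
\]
This exhibits $[x,y]$ as a $g$-primitive element, with associated family $([x_h,y_h])_{h\in G}$, whence $[x,y]\in P_g$.

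Having shown that $P_g$ is a subspace of $H_g$ closed under the commutator, I conclude that $P_g$ is a Lie subalgebra of $\Ll_\Aa(H_g)$; in particular antisymmetry and the Jacobi identity are inherited, so each $(P_g,[-,-])$ is a Lie algebra in $\Aa$. Applying \leref{LiealgPMaf} to the family $\{(P_g,[-,-])\}_{g\in G}$ then yields that $(G,P_g)$ is a Lie algebra in $\PMaf(\Aa)$, as claimed. The only delicate point is the bookkeeping with the associated families: one must verify that a single family $([x_h,y_h])_{h\in G}$ simultaneously witnesses $g$-primitivity for \emph{every} factorisation $g=hh'$, and this is precisely what the displayed identity guarantees, since it is derived uniformly for all $h,h'$ with $hh'=g$.
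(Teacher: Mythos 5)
Your proof is correct and follows essentially the same route as the paper: verify that $P_g$ is a subspace, then use that each $\Delta_{h,h'}$ is an algebra morphism to expand $\Delta_{h,h'}(xy-yx)$, observe the mixed terms cancel, and read off that $[x,y]$ is $g$-primitive with witness $([x_h,y_h])_{h\in G}$, before invoking \leref{LiealgPMaf}. The only cosmetic difference is that the formula $\Delta_{h,h'}(x)=1_h\otimes x_{h'}+x_h\otimes 1_{h'}$ for every factorisation $g=hh'$ is immediate from the definition of a $g$-primitive element rather than from \leref{gprimitive}.
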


\begin{proof}
The set of all $g$-primitive elements of $\ul H$ clearly forms a subspace of $H_g$, because of the linearity of the coproduct and tensor product. Now, fix $g\in G$ and take $x, y\in P_g$ and denote $(x_h)_{h\in G}$ and $(y_h)_{h\in G}$ for the associated elements in $\prod_{h\in G}H_h$. Let us check that the commutator $[x,y]=xy-yx$ is still an element of $P_g$, with $([x_h,y_h])_{h\in G}$ the associated element in $\prod_{h\in G}H_h$. 
We compute:
\begin{eqnarray*}
\Delta_{h,h'}(xy-yx)&=&\Delta_{h,h'}(xy)-\Delta_{h,h'}(yx)=\Delta_{h,h'}(x)\Delta_{h,h'}(y)-\Delta_{h,h'}(y)\Delta_{h,h'}(x)=\\
&=& 1_{h}\ot(x_{h'}y_{h'})+{x_{h} \ot y_{h'}} + {y_{h}\ot x_{h'}}+ ({x_{h}y_{h}})\ot 1_{h'} \\
&& - (1_{h}\ot({y_{h'}x_{h'}})+{y_{h} \ot x_{h'}} + {x_{h}\ot y_{h'}}+ ({y_{h}x_{h}})\ot 1_{h'})\\
&=& 1_{h}\ot ({x_{h'}y_{h'}}-y_{h'}x_{h'})+({x_{h}y_{h}}-{y_{h}x_{h}})\ot 1_{h'}, 
\end{eqnarray*}
where we used the very definition of $x$ and $y$ being $g$-primitive as well as the fact that $\Delta_{h,h'}$ is a morphism of algebras.
\end{proof}

Recall that if $\ul H=(G,H_g)$ is a Hopf algebra in $\Fam(\Vect)$, then $H=\oplus_{g\in G}H_g$ is a $k$-Hopf algebra. Hence, we can compute the space of indecomposables of $H$, i.e. the space $Q=\ker\epsilon/(\ker\epsilon)^2$, where $\epsilon=\sum_{g\in G}\epsilon_g$. We know from the general theory (see \seref{Mich}) that $Q$ is a Lie coalgebra, and that there is a canonical morphism
$$\pi:H\to Q,\ x\mapsto [x-\epsilon(x)1],$$
where $[y]$ denotes the equivalence class of $y(\in\ker \epsilon)$ in $\ker\epsilon/(\ker\epsilon)^2$.
This leads us to the following definition.

\begin{definition}
Let $\ul H=(G,H_g)$ be a Hopf group-algebra (i.e.\ a Hopf algebra in $\Fam(\Vect)$). Then the \textit{$g$-indecomposables}\index{element!$g$-indecomposable} of $\ul H$, denoted by $Q_{g}=Q_g(\ul H)$, are the elements of the subspace of $Q$ generated by the image of $\pi_g=\pi|_{H_g}$.
\end{definition}

\begin{lemma}\lelabel{propgindec}
Let $(G,H_g)$ be a Hopf algebra in $\Fam(\Vect)$, then for $x\in H_h$ and $y\in H_{h'}$ such that $hh'=g$, we have
$$\pi_g(xy)=\pi_h(x)\epsilon_{h'}(y)+\epsilon_h(x)\pi_{h'}(y).$$
\end{lemma}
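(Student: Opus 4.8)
The plan is to recognize this identity as the homogeneous-component version of the standard Leibniz-type rule for the projection onto indecomposables of an ordinary Hopf algebra, applied to the total Hopf algebra $H=\bigoplus_{g\in G}H_g$. No new idea beyond the grading bookkeeping is needed.

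First I would fix the global data. By the correspondence recalled in \seref{Fam}, $H=\bigoplus_{g\in G}H_g$ is a $k$-Hopf algebra whose counit is $\epsilon=\sum_{g\in G}\epsilon_g$, so that $\epsilon|_{H_h}=\epsilon_h$ for every $h$; whose unit $1$ lies in $H_e$; and whose product restricts to the homogeneous multiplications $\mu_{h,h'}:H_h\ot H_{h'}\to H_{hh'}=H_g$, which we write as juxtaposition. Consequently $\pi:H\to Q=\ker\epsilon/(\ker\epsilon)^2$, $\pi(z)=[z-\epsilon(z)1]$, is the canonical projection and $\pi_h=\pi|_{H_h}$, $\pi_{h'}=\pi|_{H_{h'}}$, $\pi_g=\pi|_{H_g}$. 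Since $xy\in H_g$, we have $\pi(xy)=\pi_g(xy)$, and it only remains to compute $\pi(xy)$ inside the single Hopf algebra $H$.

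Next I would run the classical computation on the homogeneous elements $x,y$. Put $\bar x=x-\epsilon_h(x)1$ and $\bar y=y-\epsilon_{h'}(y)1$; using $\epsilon(1)=\epsilon_e(1)=1$ together with $\epsilon(x)=\epsilon_h(x)$ and $\epsilon(y)=\epsilon_{h'}(y)$ one checks that $\bar x,\bar y\in\ker\epsilon$. Expanding the product gives
\[
xy=\bar x\,\bar y+\epsilon_{h'}(y)\,\bar x+\epsilon_h(x)\,\bar y+\epsilon_h(x)\epsilon_{h'}(y)\,1,
\]
and since $\epsilon$ is an algebra morphism we have $\epsilon(xy)=\epsilon_h(x)\epsilon_{h'}(y)$, so that
\[
xy-\epsilon(xy)1=\bar x\,\bar y+\epsilon_{h'}(y)\,\bar x+\epsilon_h(x)\,\bar y.
\]
As $\bar x,\bar y\in\ker\epsilon$, the term $\bar x\,\bar y$ lies in $(\ker\epsilon)^2$ and hence vanishes in $Q$; projecting the remaining terms yields $\pi(xy)=\epsilon_{h'}(y)\pi_h(x)+\epsilon_h(x)\pi_{h'}(y)$, which is the asserted identity.

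The computation is routine; the only point requiring care is the interaction with the grading. The elements $\bar x$ and $\bar y$ are \emph{not} homogeneous—each carries an extra component in $H_e$ coming from the unit—so the argument cannot be performed component-wise and must instead be carried out in the total Hopf algebra $H$. I would therefore stress that it is precisely the compatibility $\epsilon(xy)=\epsilon_h(x)\epsilon_{h'}(y)$, i.e.\ the fact that each $\mu_{h,h'}$ is a coalgebra morphism (equivalently, that the homogeneous parts $H_g$ are subcoalgebras), that guarantees $\epsilon(xy)$ splits correctly and lets the quadratic cross term collapse in $Q$. Beyond this identification no genuine obstacle arises.
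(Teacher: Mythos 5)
Your proof is correct and follows essentially the same route as the paper's: both arguments reduce to the observation that $(x-\epsilon_h(x)1)(y-\epsilon_{h'}(y)1)$ lies in $(\ker\epsilon)^2$ and hence dies in $Q$, with only a cosmetic difference in whether one expands $xy$ in terms of $\bar x,\bar y$ or compares the two sides directly. Your extra remark on why the argument must be run in the total Hopf algebra $H$ rather than component-wise is a sensible clarification but not a departure from the paper's method.
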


\begin{proof}
First remark that $Q$ consists of classes of elements $[x]$ with $x\in H$ that satisfy $\epsilon(x)=0$, and such that $[x]=[y]$ if and only if $x-y=\sum_i z_iz'_i$ with $\epsilon(z_i)=\epsilon(z'_i)=0$.

Now take $x$ and $y$ as in the statement of the Lemma. 
Then 
\begin{eqnarray*}
\pi_g(xy)&=&[xy-\epsilon_g(xy)1]\\
\pi_h(x)\epsilon_{h'}(y)+\epsilon_h(x)\pi_{h'}(y)&=& \pi(x\epsilon_{h'}(y)+\epsilon_h(x)y)\\
&=& [x\epsilon_{h'}(y)+\epsilon_h(x)y-2\epsilon_h(x)\epsilon_{h'}(y)1]
\end{eqnarray*}
Since $xy-x\epsilon_{h'}(y)-\epsilon_h(x)y+\epsilon_h(x)\epsilon_{h'}(y)1
=(y-\epsilon_{h'}(y)1)(x-\epsilon_h(x)1)
\in (\ker\epsilon)^2$, we find that $\pi_g(xy)=\pi_h(x)\epsilon_{h'}(y)+\epsilon_h(x)\pi_{h'}(y)$.
\end{proof}

\begin{theorem}
Let $(G,H_g)$ be a Hopf algebra in $\Fam(\Vect)$, then $(G,Q_g)$ is a collection of Lie coalgebras.
\end{theorem}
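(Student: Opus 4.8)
The plan is to realise each $Q_g$ as a Lie subcoalgebra of the Lie coalgebra of indecomposables $Q=Q(H)$ of the underlying $k$-Hopf algebra $H=\bigoplus_{g\in G}H_g$; then the two Lie-coalgebra axioms for $Q_g$ are inherited from $Q$, and the only thing left to verify is that the cobracket closes, i.e. $\Upsilon(Q_g)\subseteq Q_g\ot Q_g$. This is the exact dual of the argument used above for the $g$-primitives, where one checked that the commutator closes on $P_g$.

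Two ingredients would be recalled first. On the one hand, since $(G,H_g)$ is a Hopf algebra in $\Fam(\Vect)$, its comultiplication is the direct sum of the maps $\Delta_g:H_g\to H_g\ot H_g$; hence each homogeneous component is a subcoalgebra, $\Delta(H_g)\subseteq H_g\ot H_g$, while the unit lives in degree $e$, i.e. $1\in H_e$. On the other hand, by the classical theory recalled in \seref{Mich}, $Q=\ker\epsilon/(\ker\epsilon)^2$ is a Lie coalgebra whose cobracket is induced by the antisymmetrised comultiplication, that is $\Upsilon\circ\pi=(\pi\ot\pi)\circ(1_{H\ot H}-c_{H,H})\circ\Delta$ on $\ker\epsilon$, where $c$ is the twist and $\pi(x)=[x-\epsilon(x)1]$; in particular $\pi(1)=0$.

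The core computation is then immediate. A typical generator of $Q_g$ is $\pi_g(x)=\pi(x)=[\tilde x]$ with $x\in H_g$, where the representative $\tilde x:=x-\epsilon_g(x)1$ lies in $\ker\epsilon$ because $\epsilon(x)=\epsilon_g(x)$ for homogeneous $x$. Since $\Delta(\tilde x)=\Delta_g(x)-\epsilon_g(x)(1\ot1)$ and $(1_{H\ot H}-c_{H,H})(1\ot1)=0$, I obtain
$$\Upsilon(\pi_g(x))=(\pi\ot\pi)\,(1_{H\ot H}-c_{H,H})\,\Delta_g(x).$$
Writing $\Delta_g(x)=\sum x_{(1)}\ot x_{(2)}$ with both legs in $H_g$, we have $\pi(x_{(i)})=\pi_g(x_{(i)})\in Q_g$, so by naturality of the twist $\Upsilon(\pi_g(x))=(1_{Q\ot Q}-c_{Q,Q})\sum\pi_g(x_{(1)})\ot\pi_g(x_{(2)})\in Q_g\ot Q_g$. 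Thus $\Upsilon(Q_g)\subseteq Q_g\ot Q_g$, so $(Q_g,\Upsilon|_{Q_g})$ is a Lie subcoalgebra of $Q$ and in particular a Lie coalgebra. The family $\{Q_g\}_{g\in G}$ is therefore a collection of Lie coalgebras, which by the dual statement of \leref{LiealgPMaf} moreover assembles into a Lie coalgebra $(G,Q_g)$ in $\PFam(\Vect)$.

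I do not expect a genuine obstacle here: the whole point is that the grading is compatible with the coalgebra structure ($H_g$ is a subcoalgebra) and that $\pi$ kills the unit. The only steps needing care are the bookkeeping of the unit $1\in H_e$ when choosing representatives in $\ker\epsilon$, and making sure that the descent of $(\pi\ot\pi)(1_{H\ot H}-c_{H,H})\Delta$ to $\ker\epsilon/(\ker\epsilon)^2$ used above is exactly the cobracket of $Q$ from \seref{Mich} (the co-Jacobi identity and antisymmetry themselves need not be rechecked, being inherited from $Q$). Note that \leref{propgindec}, describing how $\pi$ behaves on products across the grading, is not needed for the closure of the cobracket, though it explains how the various $Q_g$ sit inside $Q$.
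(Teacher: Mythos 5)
Your proof is correct and follows essentially the same route as the paper's: both establish that the cobracket of $Q$ restricts to $Q_g$ by combining the commutativity of $\Upsilon_Q\circ\pi=(\pi\ot\pi)\circ\Upsilon$ with the fact that $\Delta(H_g)\subseteq H_g\ot H_g$, so that antisymmetry and co-Jacobi are inherited from $Q$. Your extra bookkeeping with the representative $x-\epsilon_g(x)1$ and the observation that the twist kills $1\ot 1$ is a slightly more careful rendering of the same argument.
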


\begin{proof}
We know (see e.g.\ 
\cite[Page 7]{Mich}) that the indecomposables $Q$ of $H=\oplus_{g\in G}H_g$ form a Lie coalgebra. The Lie co-bracket $\Upsilon_Q$ is induced by the commutator Lie co-bracket $\Upsilon=\Delta-\tau_{H\ot H}\circ \Delta$ on $H$, in the sense that the following diagram commutes.
\[
\xymatrix{
H \ar[rr]^-{\pi} \ar[d]^{\Upsilon} &&Q \ar[d]^{\Upsilon_Q} \\
H\ot H \ar[rr]^{\pi \ot \pi} && Q\ot Q
}
\]
Furthermore, the restriction of $\Delta$ to $H_g$ is exactly $\Delta_g$, whose image lies inside $H_g\ot H_g$, hence $\Upsilon(H_g)\subset H_g\ot H_g$. Therefore, since $Q_g$ is defined as $\pi(H_g)$ we find that 
$\Upsilon_Q(Q_g)=\Upsilon_Q(\pi(H_g))=(\pi\ot\pi)(\Upsilon(H_g))\subset (\pi\ot\pi)(H_g\ot H_g)\subset Q_g\ot Q_g$.
So the restriction of $\Upsilon_Q$ on $Q_g$ gives us a well-defined Lie co-bracket $\Upsilon_g$ on $Q_g$, that turns $Q_g$ into a Lie coalgebra.
\end{proof}

The main theorem of this Section is the following. It is a version of Michaelis' theorem for Hopf group-(co)algebras.

\begin{theorem}\thlabel{MichTur2}
If $\ul H=(G,H_g)$ is a locally finite Hopf group-algebra (i.e. a Hopf algebra $\Fam({\Vect}^{f}(k))$), there is a natural isomorphism between the collections of Lie algebras
$$(G,P_{g}(\ul H^\dagger))\cong (G,Q_{g}(\ul H)^*).$$
\end{theorem}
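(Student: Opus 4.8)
The plan is to realise \thref{MichTur2} as the Turaev instance of the abstract isomorphism \thref{Michaelis}, and to make the two sides explicit by means of the structural results proved just above.

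For the categorical input I would take the $\dagger$-duality of \thref{dualityfTHA} (coming from \coref{LRFam} with $\Aa=\Vect^f$) and extend it to the additive completions of \seref{PFAM}, obtaining a pair $(L^f,R^f)$ between $\PFam(\Vect^f)$ and $\PMaf(\Vect^f)$; by \prref{FamvsPFam} these are additive symmetric monoidal and $R^f$ is an additive symmetric monoidal functor. Since $(L^f,R^f)$ is an equivalence it is liftable, and it is Lie-liftable because the induced $\wR$ is again an equivalence, so that $\wL=\wR^{-1}$ furnishes the required adjoint. I would then orient the duality exactly as in \seref{Mich}: the Hopf group-algebra $\ul H$ is placed in the source (opposite-flavoured) category, its image under $\uoR$ is the Hopf group-coalgebra $\ul H^\dagger$, and \thref{Michaelis} yields a natural isomorphism of Lie algebras
$$P\uoR\ul H\cong\wR P\ul H .$$

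Reading off the two sides is where the preceding lemmas enter. On the left, $P\uoR\ul H=P(\ul H^\dagger)$ is the primitive object of the Hopf group-coalgebra $\ul H^\dagger$, which \leref{gprimitive} together with the theorem exhibiting the $g$-primitives as a Lie algebra in $\PMaf(\Vect^f)$ identifies with the collection $(G,P_g(\ul H^\dagger))$. On the right, because $\ul H$ lives in the opposite category the functor $P$ computes \emph{indecomposables} rather than primitives --- this is precisely the phenomenon $P=Q$ already observed in \seref{Mich} --- and \leref{propgindec} together with the theorem exhibiting the $g$-indecomposables as a Lie coalgebra in $\PFam(\Vect^f)$ identifies $P\ul H$ with $(G,Q_g(\ul H))$; finally $\wR$ is the $\dagger$-duality, acting degreewise as $(-)^*$, so it sends $(G,Q_g(\ul H))$ to $(G,Q_g(\ul H)^*)$ and the abstract isomorphism becomes the desired $(G,P_g(\ul H^\dagger))\cong(G,Q_g(\ul H)^*)$.

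The main obstacle is making these two identifications precise, which amounts to the degreewise statement $P_g(\ul H^\dagger)=\Ann(\ker\pi_g)\subseteq H_g^*$. Using finite-dimensionality of each $H_g$, $\pi_g^*$ realises $Q_g(\ul H)^*$ as the annihilator $\Ann(\ker\pi_g)$ in $H_g^*$, while $\Delta^\dagger_{h,h'}=\mu_{h,h'}^*$ and $1_h=\epsilon_h$ turn $g$-primitivity of $x\in H_g^*$ into the condition $x(ab)=\epsilon_h(a)x_{h'}(b)+x_h(a)\epsilon_{h'}(b)$ for all $hh'=g$. The inclusion $P_g(\ul H^\dagger)\subseteq\Ann(\ker\pi_g)$ then needs the exact description $\ker\pi_g=\sum_{hh'=g}(\ker\epsilon_h)(\ker\epsilon_{h'})$ (for $g=e$ enlarged by the unit), one inclusion of which is immediate from \leref{propgindec}; the converse inclusion --- reconstructing the whole family $(x_h)_h$ from a single functional that merely annihilates those products --- is the genuinely delicate point and is governed by coassociativity of $\ul H^\dagger$, i.e. by the compatibility encoded in \leref{gprimitive}. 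Equivalently one may bypass \thref{Michaelis} altogether and prove $P_g(\ul H^\dagger)=\Ann(\ker\pi_g)$ directly, which has the advantage of not requiring the enveloping functor on $\PFam(\Vect^f)$; in either route the commutator bracket on $P_g(\ul H^\dagger)$ matches the dual of the Lie cobracket $\Upsilon_g$ on $Q_g(\ul H)$, naturality in $\ul H$ is routine, and the degreewise isomorphisms assemble into an isomorphism in $\PMaf(\Vect^f)$ by \leref{LiealgPMaf}.
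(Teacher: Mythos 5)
Your proposal correctly locates the two available routes and, to your credit, isolates exactly where the difficulty sits; but neither route is actually closed. The primary route through \thref{Michaelis} is not available as stated: that theorem is proved by uniqueness of adjoints from the composites $\uoL U\dashv P\uoR$ and $U\wL\dashv \wR P$, so via \prref{adjunctionUP} it presupposes an enveloping functor $U$ left adjoint to $\Ll$ on the categories in play. On $\PFam(\Vect^f)$ and $\PMaf(\Vect^f)$ the existence of such a $U$ is not established (universal envelopes are infinite-dimensional, so it is at best unclear that $\Ll$ has a left adjoint over families of finite-dimensional objects), and the paper itself relegates the connection with \thref{Michaelis} to an informal remark \emph{after} the theorem, with the caveat that $P$ and $Q$ must first be re-expressed as an equalizer and a coequalizer; its official proof is the direct one. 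Your fallback --- proving $P_g(\ul H^\dagger)=\Ann(\ker\pi_g)$ degreewise --- is the right idea, but the inclusion $\Ann(\ker\pi_g)\subseteq P_g(\ul H^\dagger)$, which you yourself flag as ``the genuinely delicate point,'' is left open: the appeal to ``coassociativity'' does not by itself produce the required family $(f_h)_{h\in G}$ witnessing $g$-primitivity.

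The paper closes precisely this gap with a device absent from your sketch. Given $f\in Q_g^*$, it extends $f$ along the surjection $i_g^*:Q^*\to Q_g^*$ to a functional $f'$ on the \emph{full} space of indecomposables $Q=\ker\epsilon/(\ker\epsilon)^2$ of $H=\bigoplus_{g\in G}H_g$, and sets $f_h:=\pi_h^*i_h^*(f')$ for every $h\in G$ simultaneously; the identity $\Delta^{\ul H^\dagger}_{h,h'}(f_g)=f_h\ot\epsilon_{h'}+\epsilon_h\ot f_{h'}$ is then an immediate consequence of \leref{propgindec}, with no coassociativity argument needed. In the opposite direction it assembles a $g$-primitive family $(p_h)_h$ into a functional on $H$, kills $(\ker\epsilon)^2$ using \leref{gprimitive}, and finally invokes local finiteness to conclude that the two resulting injections between the finite-dimensional spaces $Q_g^*$ and $P_g$ are mutually inverse isomorphisms --- this dimension count is where the hypothesis of local finiteness is actually consumed, and it does not appear in your outline. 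To complete your proof you would need to supply this extension-and-restriction construction (or an equivalent way of manufacturing the family $(f_h)_h$), after which the verification that the bracket matches $\Upsilon_g^*$ is the routine computation you indicate.
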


\begin{proof}
Fix $g\in G$. Let us put $Q_{g}=Q_{g}(\ul H)$ and $P_{g}=P_{g}(\ul H^\dagger)$. 
Recall that there exists a surjective map $\pi_g:H_g\to Q_g$. Dualizing this map, we obtain the injective morphism $\pi_g^*:Q_g^*\to H_g^*$. Our aim is to show that all elements in the image of $\pi_g^*$ satisfy the defining condition to be in $P_g$. We then will have found an injective map $\alpha:Q_g^*\to P_g$, defined as $\alpha(f)=f\circ \pi_g$, for all $f\in Q_g^*$. By construction, we have a canonical injection $i_g:Q_g\to Q$, hence there is a surjective morphism $i^*_g:Q^*\to Q^*_g, i^*_g(f')=f'\circ i_g=f'|_{Q_g}$ for $f'\in Q^*$. 
By the surjectivity of the map $i^*_g$, we can write any element $f$ in $Q_g^*$ as $f=i^*_g(f')$ for some $f'\in Q^*$. 
So, taking any $f'\in Q^*$, we find that $i^*_h(f')\in Q^*_h$ and $f_h:=\pi^*_h\circ i^*_h(f')\in H_h^*$ for all $h\in G$. We obtain in this way an element $(f_h)_{h\in G}\in \coprod_{h\in G}H_h^*$ and will prove that $f_g\in P_g$. Indeed, take $h,h'\in G$ such that $hh'=g$. By definition of $\ul H^\dagger$, we have that $\Delta_{h,h'}^{\ul H^\dagger}(a)=a\circ \mu^{\ul H}_{h,h'}$, for all $a\in H_{g}^*$. Hence, we find for all $x\in H_h$ and $y\in H_{h'}$, 
\begin{eqnarray*}
f_g(xy)&=& f'(\pi_g(xy))
=f'(\pi_h(x)\epsilon_{h'}(y)+\epsilon_{h}(x)\pi_{h'}(y))\\
&=& f'(\pi_h(x))\epsilon_{h'}(y)+\epsilon_{h}(x) f'(\pi_{h'}(y))\\
&=& f_h(x)\epsilon_{h'}(y)+\epsilon_{h}(x)f_{h'}(y).
\end{eqnarray*}
Here we used \leref{propgindec} in the second equality.
Hence, $\Delta_{h,h'}(f_g)=(f_h \ot \epsilon_{h'})+ (\epsilon_h\ot f_{h'})$, so $f_g\in P_g$ and $\alpha(f)\in P_g$ for all $f\in Q_g$. 

Conversely, take $p\in P_g(H^*)$, then there exists an element $\ul p=(p_g)_{g\in G}\in\prod_{g\in G} H_g^*$ such that $p_g=p$ and $\Delta_{h,h'}^{\ul H^\dagger}(p)=\epsilon_h\ot p_{h'}+p_h\ot \epsilon_{h'}$ for all $h,h\in G$ with $g=hh'$. Moreover, there is a canonical isomorphism $\prod_{g\in G} H_g^* \cong (\oplus_{g\in G}H_g)^*$. Under this isomorphism, we can view $\ul p$ as a functional on $H=\oplus_{g\in G}H_g$ by putting for any $x=\sum_{g\in G} x_g\in \oplus_{g\in G}H_g$ (with only a finite number of terms $x_g$ being non-zero), $\ul p(x)=\sum_g p_g(x_g)$. Consider now $x_{h'}\in \ker\epsilon_{h'}$ and $x_{h''}\in \ker\epsilon_{h''}$, i.e. $x=x_{h'}x_{h''}\in(\ker\epsilon)^2$. Then 
\begin{eqnarray*}
\ul p(x)&=&p_{h'h''}(x_hx_{h'})=p_{h'h''(1)}(x_{h'})\ul p_{h'h''(2)}(x_{h''})\\
&=&p_{h'}(x_{h'})\epsilon_{h''}(x_{h''})+\epsilon_{h'}(x_{h'})\ul p_{h''}(x_{h''})=0.
\end{eqnarray*}
where we applied \leref{gprimitive} in the third equality. We obtain that $\ul p$ is well-defined (as being an element in $Q^*$). By restriction, we then find that $\ul p$ is also well-defined considered as being an element in $Q_p^*$. 
In this way, we have found a well-defined map
$\beta:P_g\to Q_g^*, \beta(p)(\pi_g(x))=\ul p(x)$, for any $\pi_g(x)\in Q_g$.

Let us show that $\beta$ is injective. Suppose that $\beta(p)=0$, then $\beta(p)(\pi_g(x))=\ul p(x)=0$ for all $\pi_g(x)$ in $Q_g$. Since $\beta(p)$ is well-defined, this means 
that $\ul p(x)=0$ for all $x\in H_g$. Hence $p=p_g=0$ and $\beta$ is injective.

Since both $\alpha$ and $\beta$ are injective morphisms between finite-dimensional vector spaces, $P_g$ and $Q_{g}^*$ are isomorphic (through $\alpha$ and $\beta$). Let us finish by showing that $\alpha:Q_g^*\to P_g, \alpha(f)=f\circ\pi_g$ is a Lie algebra morphism.
Take any $f,g\in Q_g^*$ and $q=\pi_g(x)\in Q_g$. Recall that 
$\Upsilon_g(\pi_g(x))=\pi_g(x_{(1)})\ot \pi_g(x_{(2)})- \pi_g(x_{(2)})\ot \pi_g(x_{(1)})$.
 Therefore,
\begin{eqnarray*}
\alpha(\Upsilon^*(f\ot g))(x)&=&\Upsilon^*(f\ot g)(\pi_g(x))\\
&=&f(\pi_g(x_{(1)}))g(\pi_g(x_{(2)}))- f(\pi_g(x_{(2)}))g(\pi_g(x_{(1)}))\\
&=&\alpha(f)(x_{(1)})\alpha(g)(x_{2})-\alpha(f)(x_{(2)})\alpha(g)(x_{(1)})\\
&=&[\alpha(f),\alpha(g)](x).
\end{eqnarray*}
So $\alpha(\Upsilon^*(f\ot g))=[\alpha(f),\alpha(g)]$, and $\alpha$ is indeed a Lie algebra morphism.
\end{proof}

\begin{remark}
Although not stated explicitly this way, \thref{MichTur2} follows from our general result \thref{Michaelis} applied to the adjoint pair $(L^f,R^f)$. To see this, one needs to observe that the primitive elements $(G,P_g)$ and indecomposables $(G,Q_g)$ are indeed computed as respectively the following equalizer in $\Maf(\Vect^f)$ and coequalizer in $\Fam(\Vect^f)$
\[
\xymatrix{
(G,P_g)\ar[rr] && (G,H_g) \ar@<.5ex>[rr]^-{\ul{\Delta}} \ar@<-.5ex>[rr]_-{\ul\eta\ot\ul H+\ul H\ot \ul\eta} && (G\times G,H_g\ot H_h) \\
(G\times G,H_g\ot H_h) \ar@<.5ex>[rr]^-{\ul{\mu}} \ar@<-.5ex>[rr]_-{\ul\epsilon\ot\ul H+\ul H\ot \ul\epsilon} &&
(G,H_g) \ar[rr] && (G,Q_g)
}
\]
In fact, if one computes this equalizer and coequalizer more explicitly, they come down to a multiple pullback and pushout in $\Vect$, such that each $P_g$ and $Q_g$ are characterized exactly by the equalities given in \leref{gprimitive}(i) and \leref{propgindec} respectively.
\end{remark}

\subsection*{Closing remarks}
We proved a version of Michaelis' theorem in the setting of symmetric additive monoidal categories; \thref{Michaelis}. We showed that this abstract version of the theorem allows to recover the original result of Michaelis (\seref{Mich}), as well as a super and a Hom-version.
It also can be applied to other situations. In particular, we found a version of Michaelis' theorem for Turaev's Hopf algebras which are locally finite (\thref{MichTur2}). In this way, we have a version of Michaelis' theorem for a special class of multiplier Hopf algebras and co-Frobenius Hopf algebras.
It would be interesting to see whether a more general version of Michaelis' theorem holds for larger classes of multiplier Hopf algebras, and (eventually) for locally compact quantum groups; or for Lie algebroids and Hopf algebroids. This is a direction for future research. 

\subsubsection*{Acknowledgement}

The authors would like to thank Alessandro Ardizzoni for his useful remarks on a preliminary version of this paper{\color{blue}, and for pointing out the missing condition of symmetry in \prref{prerigid}, cf.\ \cite[remark 3.5]{AGM}.}\\
The authors also want to thank the referee for the very useful remarks regarding the $\PFam$ construction, that improved the presentation of \seref{PFAM}.\\
\textcolor{blue}{Finally, the authors would like to express their gratitude to H.-E. {\color{blue}Porst} for pointing out an inaccuracy in Theorem 2.7 of the published version of this paper (as mentioned in a footnote preliminary to the actual version).}

\end{document}